\title{Univalent categories of modules}
\author{Jarl G. Taxerås Flaten}
\address{Department of Mathematics, University of Western Ontario, London, Ontario, Canada}
\email{jtaxers@uwo.ca}
\urladdr{https://publish.uwo.ca/~jtaxers}
\date{July 1, 2022}
\begin{document}

\maketitle

\begin{abstract}
  We show that categories of modules over a ring in Homotopy Type Theory (HoTT) satisfy the internal versions of the AB axioms from homological algebra.
  The main subtlety lies in proving AB4, which is that coproducts indexed by arbitrary sets are left-exact.
  To prove this, we replace a set with its strict category of (ordered) finite sub-multisets.
  From showing that the latter is filtered, we deduce left-exactness of the coproduct.
  More generally, we show that exactness of filtered colimits (AB5) implies AB4 for any abelian category in HoTT.
  Our approach is heavily inspired by Roswitha Harting's construction of the {\em internal coproduct} of abelian groups in an elementary topos with a natural numbers object~\cite{Har82}.

  To state the AB axioms we define and study filtered (and sifted) precategories in HoTT.
  A key result needed is that filtered colimits commute with finite limits of sets.
  This is a familiar classical result, but has not previously been checked in our setting.

  Finally, we interpret our most central results into an \(\infty\)-topos \( \XX \).
  Given a ring \( R \) in \( \XX \), we show that the internal category of \(R\)-modules in \( \XX \) represents the presheaf which sends an object \( X \in \XX \) to the category of \( (X{\times}R) \)-modules over \( X \).
  In general, our results yield a product-preserving left adjoint to base change of modules over \( X \).
  When \( X \) is \(0\)-truncated, this left adjoint is the internal coproduct.
  By an internalisation procedure, we deduce left-exactness of the internal coproduct as an ordinary functor from its internal left-exactness coming from HoTT.
\end{abstract}

\tableofcontents

\section{Introduction}
We study categories of modules over a ring in Homotopy Type Theory (HoTT).
Our main result is that these satisfy the (internal) axioms AB3 through AB5 and have a generator, i.e., they are {\em Grothendieck categories}.
By working in HoTT our results hold in any (Grothendieck) \( \infty \)-topos~\cite{Shu19}, and conjecturally in any elementary \( \infty \)-topos~\cite{KL18,Ras22,Shu17}.
In \cref{sec:semantics}, we interpret our most central results into an \( \infty \)-topos.
This work is part of, and motivated by, the development of homological algebra in HoTT.

In ordinary homological algebra, it is common knowledge that the category of modules over a ring is Grothendieck and satisfies AB4.
However, the question is more subtle in a constructive setting such as ours.
For example, the category of abelian groups in the type theory of~\cite{CS10} is only preabelian (see their Section~4.1 for a discussion).
Fortunately for us, \( \Mod{R} \) is abelian in HoTT, and this has already been formalized for \( R \jeq \Zb \) in the UniMath library~\cite{unimath}.

The main subtlety in verifying that \( \Mod{R} \) is Grothendieck is the existence of coproducts over an arbitrary set \( X \).
When assuming the law of the excluded middle, we are accustomed to having a natural monomorphism \( \bigoplus_{x:X} A(x) \to \Pi_{x:X} A(x) \) from an arbitrary coproduct of modules to the corresponding product.
Indeed, one often defines the coproduct to be the ``finitely supported'' elements within the product.
While the coproduct \( \bigoplus_{x:X} A(x) \) still always exists in a constructive setting, it is harder to define, and in contrast to the classical setting there may be no non-trivial maps (let alone monomorphisms) of the form \( \bigoplus_{x:X} A(x) \to \Pi_{x:X} A(x) \)!
This is further discussed in Section~\ref{ssec:grothendieck-categories}.

When Grothendieck first introduced the AB axioms, he remarked that AB4 follows from AB5~\cite[129]{Gro57}.
This is the second point which is a bit more subtle in our setting, and we prove this in Section~\ref{ssec:coproducts-a-la-harting}.
In fact, we prove a bit more: the AB5 axiom implies that the coproduct functor \( \bigoplus_X \) is left-exact for arbitrary sets \( X \) (Theorem~\ref{thm:coproduct-lex}).
Our result is analogous to, and inspired by, the {\em internal coproduct} of a family of abelian groups in an elementary topos (with $\Nb$) as constructed by Roswitha Harting in~\cite{Har82}.
Her main result is that the internal coproduct, indexed by an arbitrary object, is left-exact.
In~\cite{Ble18}, Ingo Blechschmidt remarks that the internal coproduct exists and is left-exact for families of modules as well.
Our work in Section~\ref{ssec:coproducts-a-la-harting} simultaneously translates and generalises these results by constructing type-indexed colimits in arbitrary abelian categories in HoTT.
We then recover the analogue of Harting's result: when the indexing type is a set, the colimit specialises to the coproduct and is left-exact.
In general, however, the colimit fails to be left-exact (Example~\ref{exa:not-lex}).

The original construction of the internal coproduct of abelian groups was carried out in the internal language of an elementary topos.
This internal language was not well-developed at the time, and the paper~\cite{Har82}---which is entirely dedicated to this construction---weighs in at over 60 pages. 
In contrast, by working in HoTT our generalised construction goes through in just over 2 pages (\cref{ssec:coproducts-a-la-harting}).

The usual proof that AB5 implies AB4 replaces a discrete indexing category \( X \) (for a coproduct) by a filtered category (the finite subsets of \( X \)) and uses the fact that moving from one to the other does not change the colimit of a diagram.
However, in a constructive setting neither the Bishop-finite nor the (ordered) finite subsets of \( X \) form filtered categories unless \( X \) is decidable.
Harting's insight was to work with the category \( HX \) of (ordered) finite sub-multisets of \( X \) instead.
In Section~\ref{ssec:coproducts-a-la-harting} we define \( HX \) as a precategory associated to a $1$-type \(X\) in HoTT, then we show that \( HX \) is always sifted, and moreover filtered if \( X \) is a set.
In \cref{sec:sifted-and-filtered-precategories}, we develop the necessary theory of sifted and filtered colimits.

In \cref{sec:semantics} we interpret of our most central results into a higher topos \( \XX \) with a ring object \( R \).
Specifically, we show that the internal category of \(R\)-modules resulting from interpretation represents the presheaf sending an \( X \in \XX \) to the category of \((X{\times}R)\)-modules over \(X\)~(\cref{thm:R-mod-cat-represents}).
We repackage internal categories as {\em Rezk \((1,1)\)-objects}~(\cref{dfn:rezk-object}), which are \(2\)-restricted versions of \(0\)-truncated complete Segal objects.
Rezk \((1,1)\)-objects are easily seen to represent presheaves of categories, which is their main utility for us.

We also interpret type-indexed colimits of modules, which specialises to coproducts when the indexing type is a set.
For an object \( X \in \XX \), we get an adjunction
\( \colim{X} : \Mod{(X{\times}R)} \adj \Mod{R} : X \times (-) \)
where the left adjoint preserves products (\cref{thm:semantic-coprod-lex}).
If \( X \) is a set, then the left adjoint is left-exact.
To deduce (external) left-exactness from internal left-exactness (resulting from interpretation) we use an internalisation procedure~(\cref{dfn:internalisation}) that applies more generally, and may be of independent interest.

\subsection{Conventions}

We use the conventions and notation of~\cite{hottbook}.
Our terminology for category theory mirrors that of~\cite[Chapter~9]{hottbook} and~\cite{AKS15}, in particular we leave the ``univalent'' implicit when saying category (except in this paper's title).
When we consider abelian categories we do assume these are univalent, unlike the convention in~\cite{unimath}.
If \( \cat{D} \) and \( \cat{C} \) are precategories, we denote the functor precategory using exponential notation: \( \cat{C}^{\cat{D}} \).
For a functor \( F : \cat{C}^\cat{D} \) and a morphism \( \delta : d \to d' \) in \( \cat{D} \), we write \( F_\delta : F(d) \to F(d') \) for the morphism in \( \cat{C} \) obtained by applying \( F \).
If moreover \( \eta : G \Ra G' \) is a natural transformation of functors with domain \( \cat{C} \), then we will write \( \eta_F \) for the restriction of \( \eta \) along \( F \).

When we say something is a ``property of X'', we mean it in the formal sense of being a proposition.

\cref{sec:semantics} has its own section on notation.

\subsection{Acknowledgements}

I am grateful to both Raffael Stenzel and Nima Rasekh for helpful discussions about universes and representability.
Most of all, I am grateful to my advisor Dan Christensen for countless suggestions which have helped improve this text.

\section{Sifted and filtered precategories}
\label{sec:sifted-and-filtered-precategories}
We define sifted and filtered precategories, then prove that sifted (resp.\ filtered) colimits of sets commute with finite products (resp.\ finite limits).
In fact, we prove the stronger fact that filtered colimits commute with {\em finitely generated} limits (Definition~\ref{dfn:fg-precategory}).
This generalization lets us, for example, compute the fixed points of a filtered colimit of $G$-sets as the filtered colimit of the fixed points, for a finitely generated group $G$ (Corollary~\ref{cor:fixpoints-filtered}).

These are classical results in category theory, and the usual proofs go through in our context with some added care, which is what we supply.
The work builds on Chapters~9 and~10 of the HoTT Book~\cite{hottbook}.

Before we begin, we would like to emphasise that developing $1$-category theory in HoTT is unproblematic, as opposed to \( \infty \)-category theory.
We do now know how, or whether it is even possible, to represent current approaches to the latter in HoTT.
Nevertheless, we may speak about \( \infty \)-groupoids and functors between them, namely: an \( \infty \)-groupoid is simply a type, and a functor is simply a function.
In particular, if \( X \) is a type and \( \cat{C} \) is a category, then a function \( X \to \cat{C} \) is a functor from this point of view, and there is an obvious category \( \cat{C}^X \).

\subsection{Limits and colimits of sets}
We start by defining limits and colimits indexed by precategories.
When the codomain is a category, we show that the (co)limit of a functor is invariant under replacing the domain with its Rezk completion~(Lemma~\ref{lem:rezk-limit-invariant}).
For limits and colimits of sets, we show that the classical descriptions remain valid in our setting (Proposition~\ref{pro:set-limits}).
Lastly, when the indexing category is a groupoid (i.e.\ a $1$-type; see~\cite[Example~9.1.16]{hottbook}), we show that the limit and colimit are given respectively by the \( \Pi \)- and \( \Sigma \)-type of the underlying family.

\begin{dfn} \label{dfn:limit-hott}
  Let \( D : \cat{D} \to \cat{C} \) be a functor between precategories.
  A {\bf limit of \( D \)} is an object \( \lim{\cat{D}} D \) of \( \cat{C} \) representing the functor
  \( \cat{C}^{\cat{D}} (\const{\cat{D}}(-), D) : \cat{C}^{op} \to \hSet \).
  Dually, a {\bf colimit of \( D \)} is an object \( \colim{\cat{D}} D \) of \( \cat{C} \) representing the functor \( \cat{C}^{\cat{D}}(D, \const{\cat{D}}(-)) \).
\end{dfn}

As \( \hSet \) is a category, Theorem~9.5.9 in~\cite{hottbook} implies that the type of (co)limits of a functor \( D \) is a mere proposition.
Thus if a (co)limit exists, it is unique.

\begin{rmk}
  Consider a functor \( D : \cat{D} \to \cat{C} \).
  The data of a limit of \( D \) consists of an object \( \lim{\cat{D}} D : \cat{C} \) along with a natural isomorphism \( \delta : \cat{C}^\cat{D}(\const{\cat{D}}(-), D) \simeq \cat{C}(-, \lim{\cat{D}} D) \) witnessing representability.
  When we say that an object \( c : \cat{C} \) ``is the limit of \( D \)'', we mean that such a representability witness is specified.
  Of course, by the Yoneda lemma, such a witness consists exactly of an element in \( \cat{C}^\cat{D}(\const{\cat{D}}(c), D) \) defining a universal cone on \( D \).
  The dual story applies to colimits. 
\end{rmk}

Given a functor \( D : \cat{D} \to \cat{C} \) from a precategory to a category, we may factor \( D \) uniquely via the Rezk completion \( \rezk{D} \) as follows (see \cite[Chapter~9.9]{hottbook} for details):
\[ \begin{tikzcd}[row sep=tiny]
    \cat{D} \ar[dd, "\eta_{\cat{D}}" swap] \ar[dr, "D", bend left] \\
    & \cat{C} \\
    \rezk{D} \ar[ur, "\widehat{D}" swap, bend right]
  \end{tikzcd} \]
In particular, we have a natural comparsion map
\( \lim{\rezk{D}} \widehat{D} \longrightarrow \lim{\cat{D}} D \)
induced from \( \eta_{\cat{D}} \) be precomposition, and dually for the colimit.
The following lemma implies that that these comparison maps are isomorphisms, meaning we can freely move between the (co)limit of \( D \) and \( \widehat{D} \).

\begin{lem} \label{lem:rezk-limit-invariant}
  Let \( D : \cat{D} \to \cat{C} \) be a functor from a precategory to a category.
  The restriction maps
  \[ \eta_{\cat{D}}^* : \cat{C}^{\rezk{D}}(\const{\rezk{D}}(c),\widehat{D} ) \longrightarrow \cat{C}^{\cat{D}}(\const{\cat{D}}(c), D) \quad
    \t{and} \quad
    \eta_{\cat{D}}^* : \cat{C}^{\rezk{D}}(\widehat{D}, \const{\rezk{D}}(c)) \longrightarrow  \cat{C}^{\cat{D}}(D, \const{\cat{D}}(c)) \]
  are bijections natural in \( c : \cat{C} \).
  Consequently, the (co)limits of \( D \) and \( \widehat{D} \) coincide, if either exists.
\end{lem}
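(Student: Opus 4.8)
The plan is to reduce the whole statement to the universal property of the Rezk completion recorded in \cite[Chapter~9.9]{hottbook}. Since \( \cat{C} \) is a category (and \( \cat{C}^{\cat{D}} \), \( \cat{C}^{\rezk{D}} \) are then categories by \cite[Theorem~9.2.5]{hottbook}), precomposition with the unit \( \eta_{\cat{D}} \) is an equivalence of categories
\[ (-) \circ \eta_{\cat{D}} : \cat{C}^{\rezk{D}} \xra{\ \simeq\ } \cat{C}^{\cat{D}}. \]
Both maps in the statement are, essentially by definition, the action of this functor on natural transformations: restricting a cone \( \alpha : \const{\rezk{D}}(c) \Ra \widehat{D} \) along \( \eta_{\cat{D}} \) is the whiskering \( \alpha\,\eta_{\cat{D}} \), and likewise for cocones. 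So the point is just that an equivalence of categories is fully faithful, hence induces bijections on all hom-sets. To land these bijections in exactly the displayed hom-sets we use two identifications of functors \( \rD \)-wait, of functors out of \( \cat{D} \): \( \widehat{D} \circ \eta_{\cat{D}} = D \) (the defining factorization), and \( \const{\rezk{D}}(c) \circ \eta_{\cat{D}} = \const{\cat{D}}(c) \), which holds since both functors send every object to \( c \) and every morphism to \( \id_c \). The first restriction map is then the action of \( (-)\circ\eta_{\cat{D}} \) on \( \cat{C}^{\rezk{D}}(\const{\rezk{D}}(c),\widehat{D}) \), and the second its action on \( \cat{C}^{\rezk{D}}(\widehat{D},\const{\rezk{D}}(c)) \).

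For naturality in \( c \), observe that \( c \mapsto \const{\rezk{D}}(c) \) and \( c \mapsto \const{\cat{D}}(c) \) are themselves functors \( \const{\rezk{D}} : \cat{C} \to \cat{C}^{\rezk{D}} \) and \( \const{\cat{D}} : \cat{C} \to \cat{C}^{\cat{D}} \), and the triangle
\[ \begin{tikzcd}[row sep=small]
    \cat{C} \ar[r, "\const{\rezk{D}}"] \ar[dr, "\const{\cat{D}}"' ] & \cat{C}^{\rezk{D}} \ar[d, "{(-)\circ\eta_{\cat{D}}}"] \\
    & \cat{C}^{\cat{D}}
  \end{tikzcd} \]
commutes (again by the constant-functor identification). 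Hence for \( f : c \to c' \) the functor \( (-)\circ\eta_{\cat{D}} \) carries the whiskering transformation \( \const{\rezk{D}}(f) \) to \( \const{\cat{D}}(f) \), and since it preserves composition of natural transformations, the required naturality squares commute. Equivalently, one checks directly that \( \eta_{\cat{D}}^* \) is a morphism of functors \( \cat{C}^{op} \to \hSet \) from \( \cat{C}^{\rezk{D}}(\const{\rezk{D}}(-),\widehat{D}) \) to \( \cat{C}^{\cat{D}}(\const{\cat{D}}(-), D) \), and from the previous paragraph each component is a bijection.

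The last sentence is then immediate: by \cref{dfn:limit-hott} a limit of \( D \) is an object representing \( \cat{C}^{\cat{D}}(\const{\cat{D}}(-), D) \) and a limit of \( \widehat{D} \) one representing \( \cat{C}^{\rezk{D}}(\const{\rezk{D}}(-),\widehat{D}) \); the first restriction map is a natural isomorphism between these two functors \( \cat{C}^{op} \to \hSet \), so an object represents one precisely when it represents the other, and the induced map on representing objects is exactly the comparison map \( \lim{\rezk{D}}\widehat{D} \to \lim{\cat{D}} D \) described above. The colimit statement is dual, using the second restriction map. I do not expect a genuinely hard step here: the mathematical content is entirely the Rezk-completion universal property, and the only care required is the bookkeeping that identifies the restriction maps with the action on morphisms of \( (-)\circ\eta_{\cat{D}} \), together with the two strict identifications of constant functors.
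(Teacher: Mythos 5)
Your proposal is correct and follows the same route as the paper: both reduce the statement to the fact that precomposition with the unit \( \eta_{\cat{D}} \) induces an equivalence (isomorphism) of categories \( \cat{C}^{\rezk{D}} \to \cat{C}^{\cat{D}} \) per the HoTT Book's Rezk-completion results, identify the restriction maps with its action on hom-sets via the strict equalities \( \widehat{D} \circ \eta_{\cat{D}} = D \) and \( \const{\rezk{D}}(c) \circ \eta_{\cat{D}} = \const{\cat{D}}(c) \), and conclude by full faithfulness. Your write-up is merely more explicit about naturality and the final representability step (and contains a stray ``\( \rD \)-wait'' that should be deleted), but the mathematical content is identical.
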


\begin{proof}
  The functor \( \eta_{\cat{D}} : \cat{D} \to \rezk{D} \) is a weak equivalence~\cite[Theorem~9.9.5]{hottbook}, thus mapping into \( \cat{C} \) induces an isomorphism \( \eta_{\cat{D}}^* : \cat{C}^{\rezk{D}} \to \cat{C}^{\cat{D}} \) by~\cite[Theorem~9.9.4]{hottbook}.
  Clearly, for every \( c : \cat{C} \), we have that \( \const{\rezk{D}}(c) \circ \eta_{\cat{D}} = \const{\cat{D}}(c) \) and \( \widehat{D} \circ \eta_{\cat{D}} = D \) by definition.
  The maps in question are actions of \( \eta_{\cat{D}}^* : \cat{C}^{\rezk{D}} \to \cat{D}^{\cat{D}} \) on specific hom-sets, which are (natural) bijections by full faithfullness.
\end{proof}

The usual descriptions of limits and colimits of sets are valid in HoTT.

\begin{pro} \label{pro:set-limits}
  Let \( \cat{D} \) be a small category, and \( D : \cat{D} \to \hSet \) a functor.
  \begin{enumerate}
  \item The limit of \( D \) exists, and is given by the set
    \[ \lim{\cat{D}} D = \{ x : \Pi_{\cat{D}} D \mid \Pi_{d,d' : \cat{D}} \Pi_{\delta : d \to d'} D_\delta (x_d) = x_{d'} \} \]
  equipped with the natural projections \( (\lim{\cat{D}} D \to D(d))_{d : \cat{D}} \) forming a universal cone.
  \item The colimit of \( D \) also exists, and is given by the set-quotient of \( \Sigma_{\cat{D}} D \) by the relation
    \[ (d, x) \sim (d', x') \defeq \ptr{ \Sigma_{\delta : d \to d'} D_\delta (x) = x' } \]
    equipped with the natural quotient maps \( (D(d) \to \Sigma_{\cat{D}} D / {\sim}  )_{d : \cat{D}} \) forming a universal cone.
    \end{enumerate}
\end{pro}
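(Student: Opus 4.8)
The plan is to prove both parts by directly exhibiting the stated object together with its (co)cone and then checking universality, which here reduces to unwinding what a natural transformation out of — respectively into — a constant functor is; the two parts are formally dual. Write \( L \) and \( Q \) for the two sets displayed in (1) and (2). By \cref{dfn:limit-hott} and the Remark following it, for (1) it suffices to check that the projections \( \pi_d : L \to D(d) \), \( x \mapsto x_d \), form a cone that is universal, i.e.\ that for every set \( c \) the whiskering map \( (c \to L) \to \cat{C}^{\cat{D}}(\const{\cat{D}}(c), D) \), \( f \mapsto (\pi_d \circ f)_d \), is a bijection natural in \( c \); and dually for (2).

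For (1): first, \( L \) is a set, being a subtype of the set \( \Pi_{d:\cat{D}} D(d) \) (a \( \Pi \)-type of sets is a set, regardless of the h-level of \( \cat{D} \)) carved out by a family of mere propositions, namely equalities valued in the sets \( D(d') \). Next, a function \( f : c \to L \) is the same datum as a function \( c \to \Pi_{d} D(d) \) all of whose values satisfy the compatibility clause; currying and function extensionality turn this into a family \( (\phi_d : c \to D(d))_d \), and, after swapping the two product quantifiers and applying function extensionality once more, the compatibility clause becomes \( D_\delta \circ \phi_d = \phi_{d'} \) for all \( \delta : d \to d' \) — precisely a natural transformation \( \const{\cat{D}}(c) \Ra D \). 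Tracing the maps shows this composite equivalence is the whiskering map above, and naturality in \( c \) is immediate since everything is computed by composition. Setting \( c \defeq L \), the identity is sent to the cone \( (\pi_d) \), which is therefore the universal one.

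For (2): one point deserves a remark. The relation \( \sim \) need not be symmetric or transitive, since there may be a morphism \( d \to d' \) but none back; so ``set-quotient by \( \sim \)'' is to be read as the set-quotient by this mere relation (equivalently, by the equivalence relation it generates), whose universal property is that a function \( Q \to c \) into a set is exactly a function \( g : \Sigma_{d} D(d) \to c \) with \( g(d,x) = g(d',x') \) whenever \( (d,x) \sim (d',x') \). Currying \( g \) to a family \( (\psi_d : D(d) \to c)_d \), and then — using that \( \psi_d(x) = \psi_{d'}(x') \) is a mere proposition to discard the truncation \( \ptr{-} \), followed by based path induction to contract \( \Sigma_{\delta : d \to d'}(D_\delta(x) = x') \) — the ``respects \( \sim \)'' condition becomes \( \psi_{d'} \circ D_\delta = \psi_d \) for all \( \delta \), i.e.\ a natural transformation \( D \Ra \const{\cat{D}}(c) \). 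As in (1), this identifies the co-whiskering map with a bijection natural in \( c \), and taking \( c \defeq Q \) shows the quotient maps \( \iota_d : D(d) \to Q \) form the universal cocone.

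There is no real obstacle here; the work is bookkeeping. The only genuine care needed is in tracking which types are sets or mere propositions — so that the truncation \( \ptr{-} \) in the definition of \( \sim \), and the propositional second components defining \( L \), may be discharged when mapping into a set — and in confirming that each currying, quantifier-swap, and function-extensionality step is an equivalence whose composite really is the (co)whiskering map of \cref{dfn:limit-hott}, rather than merely some bijection of the correct type. The failure of \( \sim \) to be an equivalence relation is a notational rather than mathematical subtlety, given the conventions for set-quotients.
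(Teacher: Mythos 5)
Your proof is correct, but it takes a different route from the paper's. The paper proves both parts by reduction: it writes \( \lim{\cat{D}} D \) as the equalizer of the two canonical maps \( \Pi_{d : \cat{D}} D(d) \rightrightarrows \Pi_{d,d':\cat{D}}\Pi_{\delta : d \to d'} D(d') \) and then invokes the explicit descriptions of products and equalizers in \( \hSet \) (and dually, coproducts and coequalizers for the colimit). You instead verify the universal property directly, by unwinding what a natural transformation \( \const{\cat{D}}(c) \Ra D \) (resp.\ \( D \Ra \const{\cat{D}}(c) \)) amounts to and matching it against maps into the subset \( L \) (resp.\ out of the quotient \( Q \)). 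The trade-off: the paper's argument is shorter on the page but presupposes the general fact that a category with products and equalizers has all limits computed this way, together with the concrete description of (co)equalizers and set-quotients in \( \hSet \); your argument is self-contained and makes every h-level consideration explicit, at the cost of more bookkeeping. Two details in your write-up are worth keeping: the observation that the displayed relation \( \sim \) need not be symmetric or transitive, so that ``set-quotient'' must be read as the quotient by a mere relation with the usual mapping-out universal property (the paper does not comment on this), and the careful discharge of the truncation \( \ptr{-} \) and of the propositional naturality conditions when mapping into sets. Both versions establish the same statement; yours would be the more directly formalizable of the two.
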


\begin{proof}
  The description of the limit $(1)$ results from computing \( \lim{\cat{D}} D \) via products and equalizers:
  \[ \begin{tikzcd}
      \lim{\cat{D}} D \rar[dashed]
      & \Pi_{d, d' : \cat{D}} \Pi_{\delta : d \to d'} D(d) \rar[shift left] \rar[shift right]
      & \Pi_{d : \cat{D}} D(d)
    \end{tikzcd} \]
  From the explicit descriptions of products and equalizers in \( \hSet \), we conclude. 
  Dually, the description of colimits $(2)$ is obtained by writing \( \colim{D} D \) via coproducts and coequalizers and using their respective descriptions as \( \Sigma \)-types and quotients in \( \hSet \).
\end{proof}

For indexing categories which are groupoids, both limits and colimits have a simpler description.

\begin{lem} \label{lem:groupoid-indexed-functor}
  Let \( \cat{G} \) be a groupoid, and \( \cat{C} \) a category.
  The forgetful map
  \( U : \cat{C}^{\cat{G}} \to (\cat{G} \to \cat{C}) \)
  which forgets functoriality is an equivalence.
  The inverse \( V \) sends a map \( f : \cat{G} \to \cat{C} \) to the functor \( V(f) \) acting as \( f \) on objects, and which sends a path \( \gamma : g =_{\cat{G}} g' \) to \( \idtoiso{G}(\ap{f}(\gamma)) \).
\end{lem}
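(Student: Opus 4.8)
The plan is to prove that \( U \) is an isomorphism of precategories whose two-sided inverse is the map \( V \) of the statement; since an isomorphism of precategories is in particular an equivalence, this suffices. The crux is a single computation, which I would establish first: for any functor \( F : \cat{G} \to \cat{C} \) and any morphism \( p : g \to g' \) of \( \cat{G} \) --- i.e.\ any path \( p : g =_{\cat{G}} g' \) --- one has \( F_p = \idtoiso{C}(\ap{F}(p)) \), where \( \ap{F} \) denotes the action on paths of \( F \) regarded as a function \( \cat{G} \to \cat{C} \) on objects. This is a path induction on \( p \): when \( p \jeq \refl{g} \), the left side is \( F_{\refl{g}} = \id_{F(g)} \) because \( F \) preserves identities, and the right side is \( \idtoiso{C}(\refl{F(g)}) = \id_{F(g)} \) by definition of \( \idtoiso{C} \). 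In particular, a functor out of \( \cat{G} \) is determined by its action on objects.

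Next I would check that \( V \) is well defined, i.e.\ that \( V(f) \) really is a functor for each \( f : \cat{G} \to \cat{C} \). Preservation of identities is \( \idtoiso{C}(\ap{f}(\refl{g})) = \idtoiso{C}(\refl{f(g)}) = \id_{f(g)} \), and preservation of composition follows from the functoriality of \( \ap{f} \) with respect to concatenation of paths together with the matching functoriality of \( \idtoiso{C} \), both by short path inductions. I would also describe \( V \) on morphisms: a family \( \alpha \) with \( \alpha_g : f(g) \to f'(g) \) is sent to the transformation \( V(f) \Ra V(f') \) with component \( \alpha_g \) at \( g \); its naturality square at a path \( p : g =_{\cat{G}} g' \) commutes since, after path induction on \( p \) and the computation above, both composites reduce to \( \alpha_g \). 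So no naturality datum is needed, and \( V \) is a functor.

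Then I would assemble the two round trips. That \( U \circ V \) is the identity is immediate: \( U \) discards precisely the morphism action and functor laws that \( V \) reconstructs, and on morphisms it returns the family of components unchanged. For \( V \circ U \), given a functor \( F \) the functors \( V(U(F)) \) and \( F \) have the same action on objects, and by the crux computation they have the same action on morphisms; since the functor laws live in hom-sets and are therefore mere propositions, this upgrades (using function extensionality) to an equality \( V(U(F)) = F \). On morphisms, \( U \) sends a natural transformation to its family of components and \( V \) sends that family back to the transformation with those components, which is the original one, naturality being a proposition. Hence \( V \circ U = \id \) as well.

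I do not anticipate a genuine obstacle: every nontrivial step collapses under path induction, so the only thing to watch is bookkeeping --- namely that equality of functors is detected by the action on objects together with the action on morphisms (the functor laws being propositional), and that for functors out of a groupoid, naturality of a transformation is a consequence rather than extra structure. What makes the argument run is that \( \cat{G} \) is \( 1 \)-truncated, so its morphisms are literally paths and path induction applies.
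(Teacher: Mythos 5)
Your proposal is correct and follows essentially the same route as the paper: verify that \( V(f) \) is a functor, observe that \( U \circ V = \id \) on the nose, and reduce \( V(U(F)) = F \) to the identity \( F_{\idtoiso{G}(\gamma)} = \idtoiso{C}(\ap{F}(\gamma)) \), proved by path induction on \( \gamma \). The only nitpick is that morphisms of \( \cat{G} \) are not \emph{literally} paths but are of the form \( \idtoiso{G}(\gamma) \) for a unique path \( \gamma \) (using that \( \cat{G} \) is a univalent groupoid), which is exactly how the paper phrases the general case before inducting.
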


\begin{proof}
  First of all the reader should convince themselves that the proposed definition of the inverse \( V \)  indeed constructs a functor \( V(f) \) from a general map of types \( f : \cat{G} \to \cat{C} \).
  It is then clear that \( V \) is a section of the forgetful map \( U \), so it remains to show that any functor \( F : \cat{G} \to \cat{C} \) is equal to the functor induced from its map on the underlying types.

  Clearly forgetting functoriality of \( F \) and then inducing functoriality produces the same map on the underlying types, by definition.
  Consider a general map \( \idtoiso{G}(\gamma) : g \to g' \) in \( \cat{G} \), where \( \gamma : g =_{\cat{G}} g' \).
  This is general since \( \cat{G} \) is a groupoid.
  We need to show that 
  \( F_{\idtoiso{\cat{G}}(\gamma)} = \idtoiso{G}(\ap{F}(\gamma)) \) as morphisms \( F(g) \to F(g') \) in \( \cat{C} \).
  But this follows by path induction on \( \gamma \).
\end{proof}

The lemma tells us that for functors from a groupoid into a category, we can choose to simply work with the underlying map of types.
Similar in spirit to Lemma~\ref{lem:rezk-limit-invariant}, the following proposition says that a (co)limit of sets is invariant under this change of perspective.

\begin{pro}
  Suppose \( \cat{G} \) is a groupoid, and let \( D : \cat{G} \to \hSet \) be a functor.
  The natural maps
  \( \lim{\cat{G}} D \to \Pi_{\cat{G}} D \)
  and \( \tr{\Sigma_{\cat{G}} D}_0 \to \colim{\cat{D}} D \) are bijections.
\end{pro}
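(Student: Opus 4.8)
My plan is to establish the two bijections separately, feeding in each case the explicit description of \cref{pro:set-limits} and the dictionary of \cref{lem:groupoid-indexed-functor}. The common ingredient is the following reduction, available because \( \cat{G} \) is a groupoid: every morphism \( \delta : g \to g' \) is of the form \( \idtoiso{G}(\gamma) \) for some \( \gamma : g =_{\cat{G}} g' \), so a statement quantified over all such \( \delta \) may, by path induction on \( \gamma \), be proved assuming \( \gamma \jeq \refl{g} \); and then \( \delta \jeq \idtoiso{G}(\refl{g}) \jeq \id_g \), so \( D_\delta = D_{\id_g} = \id_{D(g)} \) up to the (propositional) functoriality of \( D \). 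Conceptually this is just the groupoid case of the discussion in \cref{lem:groupoid-indexed-functor}: the hom-types of \( \cat{G} \) become path types and \( D_{(-)} \) becomes transport, and the arguments below are hands-on versions of the characterisation of identity types in \( \Sigma \)-types, \cite[Theorem~2.7.2]{hottbook}.

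\emph{Limits.} By \cref{pro:set-limits}(1), \( \lim{\cat{G}} D \) is the subtype of \( \Pi_{\cat{G}} D \) cut out by the cone predicate \( P(x) \defeq \Pi_{g, g' : \cat{G}} \Pi_{\delta : g \to g'} D_\delta(x_g) = x_{g'} \), and, unwinding the universal cone in that description, the natural map \( \lim{\cat{G}} D \to \Pi_{\cat{G}} D \) is exactly the first projection of this \( \Sigma \)-type (equivalently, the subtype inclusion). I would show \( P(x) \) is contractible for every \( x : \Pi_{\cat{G}} D \): it is a mere proposition, being an iterated product of equalities in sets, so it suffices to inhabit it, and by the reduction above it is enough to check the case \( \delta \jeq \id_g \) (with \( g' \jeq g \)), where the required equation reads \( D_{\id_g}(x_g) = x_g \). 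A map with contractible fibres is an equivalence, so the first projection is a bijection.

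\emph{Colimits.} By \cref{pro:set-limits}(2), \( \colim{\cat{G}} D \) is the set-quotient \( \Sigma_{\cat{G}} D / {\sim} \) with \( (g, x) \sim (g', x') \defeq \ptr{\Sigma_{\delta : g \to g'} D_\delta(x) = x'} \), and the universal cocone assembles into the quotient projection \( q : \Sigma_{\cat{G}} D \to \colim{\cat{G}} D \); since \( \colim{\cat{G}} D \) is a set, \( q \) factors uniquely through the set-truncation as \( q = \phi \circ \trel{{-}}_0 \), where \( \phi : \tr{\Sigma_{\cat{G}} D}_0 \to \colim{\cat{G}} D \) is the natural map in question. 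The key point is that \( \sim \) refines ``merely equal in \( \Sigma_{\cat{G}} D \)'': given \( \delta : g \to g' \) with \( D_\delta(x) = x' \), the reduction above reduces this to the case \( \delta \jeq \id_g \), where \( D_\delta(x) = x \) forces \( x = x' \), so \( (g, x) = (g', x') \). Hence the truncation map \( \trel{{-}}_0 \) sends \( \sim \)-related points to merely-equal, hence equal, points, so it descends along \( q \) to a map \( \psi : \colim{\cat{G}} D \to \tr{\Sigma_{\cat{G}} D}_0 \) with \( \psi(q(p)) = \trel{p}_0 \). Then \( \phi \) and \( \psi \) are mutually inverse: \( \psi(\phi(\trel{p}_0)) = \psi(q(p)) = \trel{p}_0 \) and \( \phi(\psi(q(p))) = \phi(\trel{p}_0) = q(p) \), and since \( \trel{{-}}_0 \) and \( q \) are surjective while both displayed equations are propositions, the induction principles of the set-truncation and of the set-quotient promote these to \( \psi \circ \phi = \id \) and \( \phi \circ \psi = \id \). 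So \( \phi \) is a bijection.

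The only real work is the bookkeeping in the first paragraph --- keeping straight that, under \( \idtoiso{G} \), morphisms of \( \cat{G} \) become paths and \( D_{(-)} \) becomes transport, so that both explicit descriptions of \cref{pro:set-limits} degenerate as expected; there is no genuinely new ingredient beyond \cref{pro:set-limits} and \cref{lem:groupoid-indexed-functor}. One could equally argue by representability: for a groupoid, naturality of a cone or cocone with vertex \( S \) is automatic by path induction, so \( \lim{\cat{G}} D \) and \( \Pi_{\cat{G}} D \) represent the same presheaf on \( \hSet \), as do \( \colim{\cat{G}} D \) and \( \tr{\Sigma_{\cat{G}} D}_0 \), and one then checks the stated natural maps are the resulting comparison isomorphisms.
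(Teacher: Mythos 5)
Your proof is correct and follows essentially the same route as the paper: for the limit you show the cone predicate is a tautology by path induction (using that hom-types in a groupoid are path types), and for the colimit you show the relation \( \sim \) coincides with equality in \( \Sigma_{\cat{G}} D \), so the set-quotient collapses to the set-truncation. The extra scaffolding you supply (contractible fibres, the explicit mutual inverses \( \phi \), \( \psi \)) is just a more detailed rendering of the same argument.
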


\begin{proof}
  First we consider the limit.
  A family \( d : \Pi_{\cat{G}} D \) lies in the limit if and only if the proposition
  \[ \Pi_{g, g' : \Gc} \Pi_{f : \cat{G}(g,g')} D_f(d_g) = d_{g'} \]
  holds.
  Since \( \cat{G} \) is a groupoid, we can identify \( \cat{G}(g,g') \) with \( g =_{\cat{G}} g' \).
  The above then immediately follows by path induction, meaning the predicate defining the limit is a tautology.

  Similarly, we will show that the equivalence relation defining the colimit is trivial so that the set-quotient on \( \Sigma_{\cat{G}} D \) is simply given by set-truncation.
  Suppose \( (g_0,d_0) \sim (g_1, d_1) \) for the colimit relation defined in Proposition~\ref{pro:set-limits}.
  By definition there merely exists some \( f : g_0 \to g_1 \) such that \( D_f(d_0) = d_1 \).
  We wish to deduce that \( (g_0, d_0) = (g_1, d_1) \).
  Since this is a proposition, we may assume \( f \) actually exists.
  As before, we identify \( f \) with a path \( g_0 =_{\cat{G}} g_1\), using that \( \cat{G} \) is a groupoid.
  Then the existence of the path \( D_f(d_0) = d_1 \) implies exactly that \( (g_0, d_0) = (g_1, d_1) \), by characterisation of paths in \( \Sigma \)-types.
  In conclusion, the colimit relation \( \sim \) is just equality, hence the set-quotient \( \colim{\cat{G}} D \) is simply \( \tr{\Sigma_{\cat{G}} D}_0 \).
\end{proof}

\subsection{Sifted colimits}
\label{ssec:sifted}

We define sifted precategories and prove that sifted colimits commute with finite products in \( \hSet \).
To us, the main interest is that it lets us compute a sifted colimit of groups on the underlying sets, since any functor which commutes with products preserves group objects.

\begin{dfn}
  Let \( \cat{C} \) be precategory.
  \begin{enumerate}
    \item Let $c$ and  $c'$ be objects of \( \cat{C}\) and let \( n : \Nb \).
      A {\bf zig-zag from \( c \) to \( c' \) of length $n$} is a path \( c =_{\cat{C}} c' \) if $n \jeq 0$, or a sequence
      \( c \ra c_1 \la \cdots \ra c_{2n-1} \la c' \)
      of morphisms in \( \cat{C} \) if \( n \geq 1 \);
    \item The precategory \( \cat{C} \) is {\bf connected} if it is non-empty (i.e.\ \( \ptr{\cat{C}} \) holds) and for every two objects in \( \cat{C} \) there merely exists a zig-zag connecting them;
    \item Let \( \cat{C}' \) be a precategory.
      A functor \( F : \cat{C}' \to \cat{C} \) between precategories is {\bf final} if for every \( c : \cat{C} \), the slice precategory \( c / F \) is connected.
  \end{enumerate}
\end{dfn}

Being connected is a property of a precategory, and consequently being final is a property of a functor.
Our definition of zig-zags is tailored to facilitate formalization.
Restricting along a final functor leaves the colimit unchanged:

\begin{pro} \label{pro:final-functor-preserves-colimit}
  Let \( F : \cat{C}' \to \cat{C} \) and \( G : \cat{C} \to \cat{D} \) be functors between precategories.
  If \( F \) is final, then restriction along \( F \) is a natural bijection between functors \( \cat{D} \to \hSet \) as follows:
  \[ F^* : \cat{D}^\cat{C}(G, \const{\cat{C}}(d)) \longrightarrow \cat{D}^{\cat{C}'} (GF, \const{\cat{C}'}(d)) \]
  naturally in \( d : \cat{D} \).
  Consequently, the colimit of \( G \) coincides with the colimit of \( GF \), if either exists.
\end{pro}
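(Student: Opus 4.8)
\section*{Proof proposal}

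The plan is to construct an explicit inverse to \( F^* \) at each object \( d : \cat{D} \) and then observe that \( F^* \) is visibly natural in \( d \), so that a pointwise inverse makes it a natural isomorphism of functors \( \cat{D} \to \hSet \). Recall that a cocone on \( G \) under \( d \) is a family \( \lambda_c : G(c) \to d \), indexed by \( c : \cat{C} \), with \( \lambda_{c'} \circ G_g = \lambda_c \) for every \( g : c \to c' \) in \( \cat{C} \), and that \( F^* \) simply remembers the subfamily \( (\lambda_{F(c')})_{c' : \cat{C}'} \). So the task is to reconstruct all of \( \lambda \) from \( \lambda \circ F \).

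Given a cocone \( \mu : GF \Ra \const{\cat{C}'}(d) \), I would define \( \lambda_c : G(c) \to d \) by choosing an object \( (c',\phi) \) of the slice \( c/F \) --- possible since \( c/F \) is connected, hence inhabited --- and setting \( \lambda_c \defeq \mu_{c'} \circ G_\phi \). The main obstacle, and the only step needing genuine care, is well-definedness: the assignment \( (c',\phi) \mapsto \mu_{c'} \circ G_\phi \) from objects of \( c/F \) to the set \( \cat{D}(G(c),d) \) is weakly constant. Indeed, a morphism \( h : (c',\phi) \to (c'',\psi) \) in \( c/F \) is a map \( h : c' \to c'' \) with \( F_h \circ \phi = \psi \), so \( \mu_{c''} \circ G_\psi = \mu_{c''} \circ (GF)_h \circ G_\phi = \mu_{c'} \circ G_\phi \) by functoriality of \( G \) and naturality of \( \mu \); the same computation handles a morphism in the opposite direction, and a path of objects is handled by applying the assignment to it. Thus the assignment is constant along zig-zags, so by connectedness of \( c/F \) it is weakly constant, and since \( \cat{D}(G(c),d) \) is a set it therefore factors through \( \ptr{c/F} \), yielding a well-defined element \( \lambda_c \).

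The remaining verifications are routine, and since each asserts an equality in a hom-set --- a proposition --- the choices of slice objects above may be taken concretely. First, \( \lambda \) is a cocone on \( G \): for \( g : c_1 \to c_2 \) pick \( (c',\phi) \) in \( c_2/F \), note \( (c',\phi \circ g) \) lies in \( c_1/F \), and compute \( \lambda_{c_1} = \mu_{c'} \circ G_{\phi \circ g} = \mu_{c'} \circ G_\phi \circ G_g = \lambda_{c_2} \circ G_g \). Second, \( F^*\lambda = \mu \): the pair \( (c', \id_{F(c')}) \) is an object of \( F(c')/F \), so \( \lambda_{F(c')} = \mu_{c'} \circ G_{\id_{F(c')}} = \mu_{c'} \). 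Third, starting from a cocone \( \lambda \) on \( G \), putting \( \mu \defeq F^*\lambda \) and reconstructing returns \( \lambda \): for \( (c',\phi) \) in \( c/F \), naturality of \( \lambda \) along \( \phi : c \to F(c') \) gives \( \mu_{c'} \circ G_\phi = \lambda_{F(c')} \circ G_\phi = \lambda_c \). Hence \( F^* \) is a bijection for each \( d \); as it plainly commutes with postcomposition by maps \( d \to d' \) it is natural in \( d \), so it is a natural isomorphism \( \cat{D}^{\cat{C}}(G,\const{\cat{C}}(-)) \simeq \cat{D}^{\cat{C}'}(GF,\const{\cat{C}'}(-)) \).

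For the final clause: a natural isomorphism of presheaves transports representing objects, and by the uniqueness of colimits noted after Definition~\ref{dfn:limit-hott} the colimit of \( G \) exists if and only if that of \( GF \) does, with the two agreeing; explicitly, if \( \lambda \) exhibits \( \colim{\cat{C}} G \) then \( F^*\lambda \) exhibits the same object as \( \colim{\cat{C}'} GF \).
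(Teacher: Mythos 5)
Your proposal is correct and follows essentially the same route as the paper's proof: the key step in both is that the assignment $(c',\phi)\mapsto \mu_{c'}\circ G_\phi$ on $c/F$ is invariant along zig-zags, hence (the target being a set) factors through $\ptr{c/F}$, which is inhabited by finality. The only organizational difference is that you phrase it as an explicit two-sided inverse and verify the cocone condition by choosing the compatible representative $(c',\phi\circ g)$ in $c_1/F$, which lets you avoid the paper's second zig-zag induction in the naturality check --- a minor but genuine streamlining.
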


\begin{proof}
  Let \( d : \cat{D} \).
  First of all, it is straightforward to check that \( F^* \) defines a natural transformation as stated.
  To prove that it is a natural isomorphism, we show that each component is a bijection.

  {\em Injectivity:} Suppose \( \eta, \eta' : G \Ra \const{\cat{C}}(d) \) are such that \( \eta_F = \eta'_F \).
  We want to show that for all \( c : \cat{C} \), \( \eta_c = \eta'_c \), which is a proposition.
  Let \( c : \cat{C} \), and pick a morphism \( f : c \to F(c') \) using that \( c / F \) is non-empty and the fact that we're proving a proposition.
  But then, by naturality of \( \eta \) and \( \eta' \), we have
  \[ \eta_c = \eta_{F(c')} \circ G_f = \eta'_{F(c')} \circ G_f = \eta'_c \]
  where the middle equation comes from \( \eta_F = \eta'_F \).
  Hence \( F^* \) is injective.

  {\em Surjectivity:} Consider a natural transformation \( \nu : GF \Ra \const{\cat{C}'}(d) \).
  For \( c : \cat{C}\), define the function
  \[ \phi(f) \defeq \nu_{c'} \circ G_f :  c / F \longrightarrow (G_c \to d) \]
  where \( f : c \to F(c') \).
  For \( f, f' : c / F \), one can easily show (using naturality of \( \nu \)) that \( \phi(f) = \phi(f') \) by induction over the length of a zig-zag from \( f \) to \( f' \).
  Consequently, \( \im{\phi} \) is a proposition and we may therefore factor \( \phi \) via its propositional truncation, producing
  \( \ptrel{\phi} : \ptr{c / F} \to \im{\phi} \to (G_c \to d) \).
  Thus we get a map \( g : G(c) \to d \) using the fact that \( c / F \) is non-empty.
  Doing this for all \( c : \cat{C} \) gets us a transformation \( \eta : \Pi_{c : \cat{C}} G(c) \to d \) which, by construction, satisfies \( \eta_F = \nu \).
  
  It remains to prove that \( \eta \) is natural. 
  Let \( g : c_0 \to c_1 \) be a morphism in \( \cat{C} \).
  We need to show that \( \eta_{c_0} = \eta_{c_1} \circ G_g \), which is a proposition.
  By finality of \( F \), we may choose \( f_0 : c_0 \to F(c'_0) \) and \( f_1 : c_1 \to F(c'_1) \) to obtain the following diagram:
  \[ \begin{tikzcd}
      G(c_0) \ar[dd, "G_g"] \rar["G_{f_0}"] & GF(c_0') \ar[dr, "\nu_{c'_0}", bend left] \ar[d, dotted] & {} \\
      & \vdots \rar[dotted] & d \\
      G(c_1) \rar["G_{f_1}"] & GF(c_1') \ar[ur, "\nu_{c'_1}" swap, bend right] \ar[u, dotted] & 
    \end{tikzcd} \]
  where the outer diagram is the one we wish to show commutes.
  Since \( c_0 / F \) is connected, the two maps \( f_0 \) and \( f_1 \circ g \) are connected by a zig-zag which, after applying \( G \), produces the dotted lines above.
  The left square then commutes by definition of a zig-zag, and the triangles on the right commute by naturality of \( \nu \).
  Inducting over the length of the zig-zag, we conclude that \( \eta \) is natural, as desired.
\end{proof}

\begin{dfn}
A precategory \( \cat{S} \) is {\bf sifted} if it is non-empty and \( \diag{S} : \cat{S} \to \cat{S} \times \cat{S} \) is final.
\end{dfn}

There are various equivalent classical definitions of siftedness.
We chose the one above to make the connection with final functors immediate, and to facilitate the proof of the following:

\begin{lem}
  If a precategory \( \cat{C} \) is non-empty and has binary coproducts, then \( \cat{C} \) is sifted.
\end{lem}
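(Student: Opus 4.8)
The plan is to unwind the definition: to show \( \cat{C} \) is \emph{sifted} we must check it is non-empty (given) and that the diagonal \( \diag{C} : \cat{C} \to \cat{C} \times \cat{C} \) is final, i.e.\ that for every pair \( (c,c') : \cat{C} \times \cat{C} \) the slice precategory \( (c,c')/\diag{C} \) is connected. Here an object of \( (c,c')/\diag{C} \) is a triple consisting of an object \( d : \cat{C} \) together with morphisms \( f : c \to d \) and \( g : c' \to d \) (i.e.\ a morphism \( (c,c') \to \diag{C}(d) = (d,d) \) in \( \cat{C} \times \cat{C} \)), and a morphism \( (d,f,g) \to (d',f',g') \) is an \( h : d \to d' \) with \( h \circ f = f' \) and \( h \circ g = g' \).

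The key point is that a binary coproduct supplies an object of this slice which maps to every other object of the slice. First I would fix a binary coproduct \( c + c' \) with inclusions \( \iota_1 : c \to c + c' \) and \( \iota_2 : c' \to c + c' \), and set \( P \defeq (c + c', \iota_1, \iota_2) \); this already witnesses that \( (c,c')/\diag{C} \) is non-empty. Then, given an arbitrary slice object \( (d,f,g) \), the universal property of the coproduct yields a morphism \( [f,g] : c + c' \to d \) with \( [f,g] \circ \iota_1 = f \) and \( [f,g] \circ \iota_2 = g \); these are precisely the two equations saying that \( [f,g] \) is a morphism \( P \to (d,f,g) \) in \( (c,c')/\diag{C} \).

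Finally I would conclude connectedness: for any two slice objects \( a \) and \( b \) we obtain the span \( a \leftarrow P \to b \), which, padded with identities at its ends, is exactly the zig-zag \( a \xrightarrow{\id} a \leftarrow P \to b \xleftarrow{\id} b \) of length \( 2 \) in the sense of the definition. Such a zig-zag merely exists for every \( a, b \), and the slice is non-empty, so \( (c,c')/\diag{C} \) is connected; hence \( \diag{C} \) is final and \( \cat{C} \) is sifted. I expect essentially no obstacle here beyond bookkeeping; the only points requiring a little care are verifying that \( [f,g] \) really is a slice morphism (that the two coproduct triangle identities coincide with the compatibility conditions defining morphisms in \( (c,c')/\diag{C} \)) and matching the naive span picture to the formal alternating shape of a zig-zag by inserting identities.
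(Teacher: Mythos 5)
Your proof is correct and follows essentially the same route as the paper: the binary coproduct \( c + c' \) with its inclusions is an initial (in your argument, weakly initial) object of the slice \( (c,c')/\diag{C} \), and a slice with such an object is connected. Your explicit padding of the span \( a \leftarrow P \to b \) with identities to match the formal zig-zag shape is a careful touch the paper glosses over.
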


\begin{proof}
  Suppose \( \cat{C} \) is non-empty and has binary coproducts.
  Then for every \( (c_0, c_1) : \cat{C}^2 \), the slice precategory \( (c_0, c_1) / \diag{C} \) has an initial object given by the coproduct.
  Then we are done, since any category with initial object is connected (by zig-zags of length at most $1$).
\end{proof}

\begin{pro}
Sifted colimits of sets commute with finite products.
\end{pro}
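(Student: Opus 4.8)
The plan is to assemble the statement from two classical facts, both available here: that restricting a diagram along the final functor $\diag{S}$ leaves its colimit unchanged (Proposition~\ref{pro:final-functor-preserves-colimit}, which is precisely what it means for $\cat{S}$ to be sifted), and that a colimit over a product of indexing categories separates into a product of colimits. First I would dispatch the nullary product. A sifted precategory is in particular connected: it is nonempty, and for objects $c, c'$ the slice $(c,c')/\diag{S}$ is connected, hence nonempty, which merely yields a cospan $c \to d \leftarrow c'$, i.e.\ a zig-zag of length one. So by the explicit formula of Proposition~\ref{pro:set-limits}, $\colim{\cat{S}}\const{\cat{S}}(1)$ is the set-quotient of (the type of objects of) $\cat{S}$ by the relation ``there merely exists a morphism''; being inhabited and, by connectedness, a mere proposition, this quotient is $1$---the empty product of sets. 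The general finite case follows from the nullary and binary cases by induction, so the real content is the binary one.

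So fix $D, E : \cat{S} \to \hSet$; the goal is a (natural) bijection between $\colim{\cat{S}}(D \times E)$ and $\colim{\cat{S}}D \times \colim{\cat{S}}E$. I would introduce $D \boxtimes E : \cat{S} \times \cat{S} \to \hSet$, $(s,t) \mapsto D(s) \times E(t)$, and note that $(D \boxtimes E) \circ \diag{S} = D \times E$. Since $\diag{S}$ is final by the definition of siftedness, Proposition~\ref{pro:final-functor-preserves-colimit} gives a bijection $\colim{\cat{S}}(D \times E) \cong \colim{\cat{S} \times \cat{S}}(D \boxtimes E)$; this is the one place siftedness enters. It then remains to produce a bijection $\colim{\cat{S} \times \cat{S}}(D \boxtimes E) \cong \colim{\cat{S}}D \times \colim{\cat{S}}E$, which holds for arbitrary indexing categories.

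For this last step I would use the explicit descriptions of Proposition~\ref{pro:set-limits}. The colimit $\colim{\cat{S} \times \cat{S}}(D \boxtimes E)$ is the set-quotient of $\Sigma_{\cat{S}}D \times \Sigma_{\cat{S}}E$ by the relation identifying $((s,x),(t,y))$ with $((s',x'),(t',y'))$ whenever there merely exist $f : s \to s'$ and $g : t \to t'$ with $D_f(x) = x'$ and $E_g(y) = y'$; rearranging the $\Sigma$-types and the propositional truncation shows that this relation is exactly the product of the colimit relations of $D$ and of $E$. On the other hand $\colim{\cat{S}}D \times \colim{\cat{S}}E$ is the product of those two set-quotients, and a product of set-quotients by reflexive relations is the set-quotient by the product relation. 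Hence the two sides are literally the same set-quotient; chaining with the previous paragraph produces the desired bijection, and a routine check on representatives confirms that it is the canonical comparison map.

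I do not foresee a genuinely hard step. The difficulty one might expect here---that in a constructive setting this sort of argument must contend with zig-zags of arbitrary length in $\cat{S}$, not merely cospans---has already been absorbed into the proof of Proposition~\ref{pro:final-functor-preserves-colimit}. What remains is bookkeeping, and the point that needs the most care is the step above: lining up the explicit forms of the several set-quotients, keeping the mere-existence discipline straight (throughout one is proving a mere proposition, so truncated hypotheses may be assumed untruncated), and, if it is not already on hand, supplying the short surjectivity/injectivity argument that a product of set-quotients by reflexive relations is the set-quotient by the product relation.
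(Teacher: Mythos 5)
Your proposal is correct, and its skeleton matches the paper's: both reduce $\colim{\cat{S}}(D\times E)$ to $\colim{\cat{S}\times\cat{S}}(D\boxtimes E)$ via finality of $\diag{S}$ and Proposition~\ref{pro:final-functor-preserves-colimit}, which is indeed the only place siftedness is used. You diverge in how the remaining identity $\colim{\cat{S}\times\cat{S}}(D\boxtimes E)\simeq\colim{\cat{S}}D\times\colim{\cat{S}}E$ is established: the paper argues abstractly, writing the double colimit as an iterated colimit (Fubini) and then pulling each $\colim{}$ out past $\times$ using that the product bifunctor is cocontinuous in each variable (being a left adjoint); you instead compute both sides as explicit set-quotients via Proposition~\ref{pro:set-limits} and match the relations, invoking the fact that a product of set-quotients by reflexive relations is the set-quotient by the product relation. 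Both routes are sound; the paper's is shorter on the page and generalises away from $\hSet$ more readily, while yours is more self-contained and constructively transparent (the truncation bookkeeping $\ptr{P\times Q}\simeq\ptr{P}\times\ptr{Q}$ and the reflexivity hypothesis are exactly the points that need checking, and you flag them). Your treatment of the nullary case is actually more careful than the paper's one-liner: non-emptiness alone only inhabits $\colim{\cat{S}}1$, and you correctly supply the connectedness of $\cat{S}$ (extracted from finality of the diagonal) to see that the quotient is a proposition, hence contractible.
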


\begin{proof}
  Let \( \cat{S} \) be a sifted precategory.
  The claim that colimits over \( \cat{S} \) commute with empty products follows from \( \cat{S} \) being non-empty.
  Consider two functors \( G, H : \cat{S} \to \hSet \), then we have the following natural bijections:
  \begin{align*}
    \colim{s : \cat{S}} \big ( G_s \times H_s \big ) &\simeq \colim{(s,t): S \times S} G_s \times H_t &\t{(Prop. \ref{pro:final-functor-preserves-colimit} applied to \( \diag{S} \))} \\
                                                       &\simeq \colim{s : \cat{S}} \colim{t : \cat{S}} G_s \times H_t \\
                                                       &\simeq \colim{s : \cat{S}} \big ( G_s \times \colim{t : \cat{S}} H_t \big ) &\t{(\( G_s \times - \) is cocontinuous)} \\
    &\simeq \colim{s : \cat{S}} G_s \times \colim{t : \cat{S}} H_t &\t{( \( - \times \colim{t : \cat{S}} H_t \) is cocontinuous)}
  \end{align*}
  where the second step can be checked directly.
  The product bifunctor \( \times \) preserves colimits in each variable, being a left adjoint.
\end{proof}

% the proof only uses that Set is cartesian closed

Sifted colimits of groups can be computed on the underlying sets.
Let \( U : \Gp \to \hSet \) be the forgetful functor in the following statement:

\begin{cor} \label{cor:sifted-preserve-groups}
  Let \( G : \cat{S} \to \Gp \) be a sifted diagram of groups.
  The set \( \colim{s : \cat{S}} U(G_s) \) carries a natural group structure which recovers \( \colim{\cat{S}} G \).
\end{cor}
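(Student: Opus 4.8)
The plan is to recognise the diagram \( G \) as a group object in the functor category \( \hSet^{\cat{S}} \), apply the colimit functor \( \colim{\cat{S}} : \hSet^{\cat{S}} \to \hSet \) — which preserves finite products by the preceding proposition — and observe that the resulting group object of \( \hSet \) has underlying set \( \colim{s : \cat{S}} U(G_s) \). Indeed, to give a functor \( \cat{S} \to \Gp \) is to give a functor \( X : \cat{S} \to \hSet \) together with natural transformations \( m : X \times X \Ra X \), \( e : \const{\cat{S}}(1) \Ra X \) and \( i : X \Ra X \) satisfying the group axioms pointwise; this is precisely a group object in \( \hSet^{\cat{S}} \), the naturality of \( m, e, i \) being exactly the requirement that each \( G_\sigma \) is a homomorphism. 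Write \( X \) for the underlying functor of \( G \), so \( X_s \jeq U(G_s) \).

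Next I would equip \( \colim{\cat{S}} X \) with a group structure. Since \( \colim{\cat{S}} \) preserves finite products, the preceding proposition supplies natural isomorphisms \( \colim{\cat{S}}(X \times X) \simeq (\colim{\cat{S}} X)^2 \) and \( \colim{\cat{S}}(\const{\cat{S}}(1)) \simeq 1 \); transporting \( \colim{\cat{S}} m \), \( \colim{\cat{S}} e \) and \( \colim{\cat{S}} i \) across them produces \( m' : (\colim{\cat{S}} X)^2 \to \colim{\cat{S}} X \), \( e' : 1 \to \colim{\cat{S}} X \) and \( i' : \colim{\cat{S}} X \to \colim{\cat{S}} X \). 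Each group axiom is an equation between two composites of structure maps, identities and projections between finite powers of \( X \); applying the product-preserving functor \( \colim{\cat{S}} \) to such an equation of maps yields the corresponding equation for \( m', e', i' \), as is routine for product-preserving functors. Hence \( A \defeq (\colim{s : \cat{S}} U(G_s), m', e', i') \) is a group, manifestly functorial in \( G \). Moreover each colimit injection \( \iota_s : U(G_s) \to \colim{s} U(G_s) \) is a homomorphism: applying \( \colim{\cat{S}} \) to \( m \) and chasing the comparison isomorphism gives \( \iota_s \circ m_s = m' \circ (\iota_s \times \iota_s) \), and similarly for \( e \) and \( i \).

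It remains to check that \( A \), with the cocone \( (\iota_s)_{s} \), is the colimit of \( G \) in \( \Gp \). Let \( H \) be a group and \( \phi : G \Ra \const{\cat{S}}(H) \) a cocone of homomorphisms. Since \( \colim{s} U(G_s) \) is the colimit of \( X \) in \( \hSet \) (\cref{pro:set-limits}), there is a unique map of sets \( \bar{\phi} : U(A) \to U(H) \) with \( \bar{\phi} \circ \iota_s = U(\phi_s) \) for all \( s \). I claim \( \bar{\phi} \) is a homomorphism. For multiplicativity, \( \bar{\phi} \circ m' \) and \( m_H \circ (\bar{\phi} \times \bar{\phi}) \) are maps \( U(A)^2 \to U(H) \); iterating the preceding proposition, \( (\iota_s \times \iota_s)_s \) exhibits \( U(A)^2 \) as \( \colim{s}(U(G_s)^2) \), so it suffices to check the two composites agree after precomposing with each \( \iota_s \times \iota_s \). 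The left composite becomes \( \bar{\phi} \circ \iota_s \circ m_s = U(\phi_s) \circ m_s \) (using that \( \iota_s \) is a homomorphism and \( \bar{\phi} \circ \iota_s = U(\phi_s) \)); the right becomes \( m_H \circ (U(\phi_s) \times U(\phi_s)) \); and these are equal because \( \phi_s \) is a homomorphism. The unit and inverse equations are verified the same way, using \( \colim{\cat{S}}(\const{\cat{S}}(1)) \simeq 1 \) for the former. Thus \( \bar{\phi} \) underlies a homomorphism \( \psi : A \to H \) with \( \psi \circ \iota_s = \phi_s \) (both sides being homomorphisms with the same underlying map, and \( U \) faithful), and any homomorphism \( A \to H \) restricting to \( \phi \) has underlying map \( \bar{\phi} \), hence equals \( \psi \). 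Therefore \( \colim{\cat{S}} G \) exists and equals \( A \).

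I expect the only genuine obstacle to be the last step — specifically the observation that \( (\iota_s \times \iota_s)_s \), and more generally its \( n \)-fold analogues, remain colimit cocones, so that identities between maps out of \( U(A)^n \) can be verified on these legs; this is exactly where the preceding proposition is used, applied to each finite power rather than once. Everything else is transport of algebraic identities along a product-preserving functor. Alternatively one can package the argument conceptually: \( \colim{\cat{S}} \dashv \const{\cat{S}} \) is an adjunction whose left adjoint preserves finite products, and any such adjunction lifts to an adjunction between the categories of group objects with the same underlying functors, so \( \colim{\cat{S}} : \Gp^{\cat{S}} \to \Gp \) computes colimits and agrees on underlying sets with \( \colim{\cat{S}} : \hSet^{\cat{S}} \to \hSet \).
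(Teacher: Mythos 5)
Your proposal is correct and follows the same route the paper intends: the paper offers no explicit proof, relying on the remark at the start of \cref{ssec:sifted} that a product-preserving functor (here \( \colim{\cat{S}} \), via the preceding proposition) carries group objects of \( \hSet^{\cat{S}} \) to group objects of \( \hSet \). Your additional verification that the resulting group satisfies the universal property of \( \colim{\cat{S}} G \) in \( \Gp \) correctly fills in the detail the paper leaves implicit.
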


\subsection{Filtered colimits} \label{ssec:filtered}
Filtered colimits of sets have particularly nice descriptions, and it is well known that they commute with finite limits, classically.
Less known is that fact that filtered colimits actually commute with {\em finitely generated limits} (Definition~\ref{dfn:fg-precategory}).
We start with the relevant definitions in our context.

\begin{dfn}
  A precategory \( \cat{F} \) is {\bf filtered} if the following propositions hold:
  \begin{enumerate}
  \item \( \cat{F} \) is non-empty;
  \item for any two objects \( c, c' : \cat{F} \), there merely exists an {\bf upper bound} \( c \to c'' \leftarrow c' \);
    \item for any two arrows \( f, g : c \to c' \), there merely exists some \( h : c' \to c'' \) such that \( hf = hg \).
  \end{enumerate}
\end{dfn}

By definition, filteredness is a property of a precategory.
We observe the following:

\begin{lem}
  Filtered precategories are sifted.
\end{lem}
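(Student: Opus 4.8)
The plan is to unwind the definition of sifted. We must show that a filtered precategory \( \cat{F} \) is non-empty and that the diagonal \( \diag{F} : \cat{F} \to \cat{F} \times \cat{F} \) is final, i.e.\ that for every pair \( (c_0, c_1) : \cat{F} \times \cat{F} \) the slice precategory \( (c_0,c_1)/\diag{F} \) is connected. Non-emptiness of \( \cat{F} \) is clause~(1) of filteredness, so it remains to treat the slices. Recall that an object of \( (c_0,c_1)/\diag{F} \) is an object \( a : \cat{F} \) together with a cospan \( c_0 \xrightarrow{f_0} a \xleftarrow{f_1} c_1 \) in \( \cat{F} \), and that a morphism \( (a,f_0,f_1) \to (b,g_0,g_1) \) is a map \( \phi : a \to b \) with \( \phi f_0 = g_0 \) and \( \phi f_1 = g_1 \).

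To see that such a slice is non-empty, apply clause~(2) of filteredness to \( c_0 \) and \( c_1 \) to merely obtain an upper bound \( c_0 \to c'' \leftarrow c_1 \), which is precisely an object of the slice. For connectedness, given two objects \( (a, f_0, f_1) \) and \( (b, g_0, g_1) \), I would merely produce a zig-zag of length one between them. First use clause~(2) to merely choose an upper bound \( a \xrightarrow{p} e \xleftarrow{q} b \). The composites \( p f_0 \) and \( q g_0 \) in \( \cat{F}(c_0, e) \) need not agree, so apply clause~(3) to merely obtain \( r : e \to e' \) with \( r p f_0 = r q g_0 \); applying clause~(3) once more to \( r p f_1, r q g_1 : c_1 \to e' \) yields \( s : e' \to e'' \) with \( s r p f_1 = s r q g_1 \), and postcomposing with \( s \) preserves the earlier equality, so \( s r p f_0 = s r q g_0 \) as well. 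Setting \( h \defeq s r p : a \to e'' \) and \( k \defeq s r q : b \to e'' \), we have \( h f_i = k g_i \) for \( i = 0, 1 \). Hence \( h \) is a morphism \( (a, f_0, f_1) \to (e'', h f_0, h f_1) \) and \( k \) is a morphism \( (b, g_0, g_1) \to (e'', h f_0, h f_1) \) in the slice, giving the desired zig-zag. Since connectedness is a proposition, stripping the truncations in clauses~(2) and~(3) to make these choices is legitimate.

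I do not expect a genuine obstacle here; the only things to be careful about are the bookkeeping with propositional truncations---at each stage we are proving a proposition, so the existentials in the filteredness clauses may be discharged and the chosen witnesses combined---and the observation that a single map out of \( e \) (namely \( s r \)) can be arranged to coequalize both pairs \( (p f_0, q g_0) \) and \( (p f_1, q g_1) \) simultaneously by applying clause~(3) twice in succession.
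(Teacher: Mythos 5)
Your proof is correct. The paper states this lemma without proof, and your argument---non-emptiness of the slice from clause~(2), then connecting any two cospans by a zig-zag of length one via an upper bound followed by two successive applications of clause~(3) to coequalize both leg-pairs---is exactly the standard argument the authors leave implicit, including the correct handling of the propositional truncations.
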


It is straightforward to prove, by induction, that any finite family of objects in a filtered category merely admits an upper bound.
Similarly, any finite number of parallel arrows merely admit a (not necessarily universal) coequalizing arrow.
The more general fact is that filtered categories admit cone for finitely generated diagrams.

\begin{dfn} \label{dfn:fg-precategory}
  A precategory \( \cat{D} \) is {\bf finitely generated} if the underlying type of objects is Bishop-finite, and there exists a family of morphisms \( \Phi : \Pi_{i:I} \cat{D}(s_i, t_i) \) in \( \cat{D} \) indexed by a Bishop-finite set \( I \), such that every morphism in \( \cat{D} \) merely factors as follows:
  \[ \Pi_{m,m' : \cat{D}} \Pi_{g : m \to m'} \ptr{ \Sigma_{n : \Nb} \Sigma_{j : \fin{n} \to I} g = \Phi_{j(n-1)} \cdots \Phi_{j(0)} } \]
  where \( \fin{n} \) denotes the standard \(n\)-element set.
\end{dfn}

Observe that a finitely generated precategory is automatically a strict category.

\begin{pro} \label{pro:weakly-finitely-cocomplete}
  Let \( \cat{F} \) and \( \cat{D} \) be filtered and finitely generated categories, respectively.
  Any functor \( D : \cat{D} \to \cat{F} \) merely admits a cone.
\end{pro}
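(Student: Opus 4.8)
The statement is a proposition, so I may freely unpack the mere-existence hypotheses and aim to exhibit an object \( v : \cat{F} \) together with a cocone \( D \Ra \const{\cat{D}}(v) \). First I would unwind the finiteness data coming from \( \cat{D} \) being finitely generated: choose an equivalence identifying the objects of \( \cat{D} \) with \( \fin{k} \) for some \( k : \Nb \) — write \( d_0, \dots, d_{k-1} \) for the objects — and fix the generating family \( \Phi : \Pi_{i : I} \cat{D}(s_i, t_i) \) together with an equivalence \( I \simeq \fin{m} \). Under these identifications a cocone on \( D \) amounts to a family \( (\gamma_j : D(d_j) \to v)_{j : \fin{k}} \) satisfying naturality.

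Next I would record the routine consequence of filteredness noted in the text: by induction on \( \fin{n} \), using non-emptiness of \( \cat{F} \) for the empty family and the second condition in the definition of a filtered precategory for the inductive step, every finite family of objects of \( \cat{F} \) merely admits a common upper bound. Applying this to \( (D(d_j))_{j : \fin{k}} \) gives — and hence, since we are proving a proposition, we may assume — an object \( u \) with morphisms \( \alpha_j : D(d_j) \to u \).

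The heart of the argument is to force the generators into the cocone by an induction over \( I \). Processing the generators \( i : \fin{m} \) one at a time, I would build a chain \( u = u_0 \to u_1 \to \dots \to u_m =: v \) while updating the legs: given legs \( \alpha_j^{(i-1)} : D(d_j) \to u_{i-1} \), apply the third condition of filteredness to the parallel pair \( \alpha_{s_i}^{(i-1)} , \ \alpha_{t_i}^{(i-1)} \circ D_{\Phi_i} : D(d_{s_i}) \to u_{i-1} \) to obtain a morphism \( h_i : u_{i-1} \to u_i \) coequalizing it, and set \( \alpha_j^{(i)} \defeq h_i \circ \alpha_j^{(i-1)} \). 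Writing \( \gamma_j \defeq \alpha_j^{(m)} : D(d_j) \to v \), the equation \( \gamma_{t_i} \circ D_{\Phi_i} = \gamma_{s_i} \) holds for each generator \( i \): it is true at stage \( i \) by construction of \( h_i \), and is preserved under postcomposition with \( h_{i+1}, \dots, h_m \).

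Finally, naturality. For a general morphism \( g : m \to m' \) of \( \cat{D} \), the equation \( \gamma_{m'} \circ D_g = \gamma_m \) is a proposition, so by the factorization clause of Definition~\ref{dfn:fg-precategory} I may assume \( g = \Phi_{j(n-1)} \cdots \Phi_{j(0)} \); the equation then follows by induction on \( n \) from functoriality of \( D \) and the generator-wise equations above, the base case \( n = 0 \) (i.e.\ \( g \) an identity) being immediate. This makes \( (\gamma_d)_{d : \cat{D}} \) a cocone on \( D \), as desired. I expect the main obstacle to be the middle step: since the parallel pairs \( \alpha_{s_i}^{(i-1)}, \alpha_{t_i}^{(i-1)} \circ D_{\Phi_i} \) have domains \( D(d_{s_i}) \) that vary with \( i \), one cannot coequalize them all at once via a single lemma and must thread the updated legs through the induction — taking care, since \( \cat{D} \) is a strict category, to keep the bookkeeping at the level of sets so that the mere-existence reasoning is legitimate.
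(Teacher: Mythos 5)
Your proof is correct and is essentially the paper's argument: the paper simply cites Lemma~2.13.2 of Borceux and asserts that it ``readily generalizes'' to finitely generated \( \cat{D} \), and what you have written is precisely that generalization in full — upper bound for the finitely many objects, a finite chain of coequalizing morphisms to enforce the generator equations, and extension to all morphisms via the factorization clause and induction on word length. The care you take with mere existence and finite choice (legitimate because the goal is a proposition) is exactly the ``careful use of finite choice'' the paper alludes to.
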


\begin{proof}
  Lemma~2.13.2 of~\cite{Bor94} readily generalizes to the case when \( \cat{D} \) is finitely generated.
\end{proof}

\begin{thm} \label{thm:filtered-colims-commute}
  Let \( \cat{F} \) and \( \cat{D} \) be filtered and finitely generated categories, respectively, and consider a functor \( D : \cat{F} \times \cat{D} \to \hSet \).
  The natural map \( \colim{\cat{F}} \lim{\cat{D}} D \to \lim{\cat{D}} \colim{\cat{F}} D \) is a bijection.
\end{thm}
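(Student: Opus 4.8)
The plan is to reduce the theorem to the case of ordinary finite limits by exploiting the weak finite cocompleteness of \( \cat{F} \) (Proposition~\ref{pro:weakly-finitely-cocomplete}) together with the explicit descriptions of limits and colimits of sets from Proposition~\ref{pro:set-limits}. First I would fix the concrete model: by Proposition~\ref{pro:set-limits}, \( \colim{\cat{F}} \lim{\cat{D}} D \) is the set-quotient of \( \Sigma_{f : \cat{F}} \lim{\cat{D}} D(f,-) \) by the ``merely connected by a morphism of \( \cat{F} \)'' relation, while \( \lim{\cat{D}} \colim{\cat{F}} D \) is the subset of \( \Pi_{m : \cat{D}} \colim{\cat{F}} D(-,m) \) consisting of compatible families. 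The natural map sends the class of \( (f, (x_m)_m) \) to the family of classes \( ([f, x_m])_m \). I would then show this map is a bijection by checking surjectivity and injectivity separately, each of which is a proposition, so I may freely introduce chosen representatives obtained from propositional truncations.

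For \textbf{surjectivity}: given a compatible family \( (\xi_m)_{m : \cat{D}} \) with \( \xi_m : \colim{\cat{F}} D(-,m) \), choose representatives \( \xi_m = [f_m, x_m] \) with \( f_m : \cat{F} \) and \( x_m : D(f_m, m) \). Since \( \cat{D} \) is finitely generated its objects are Bishop-finite, so using filteredness of \( \cat{F} \) I obtain a single \( f : \cat{F} \) with morphisms \( f_m \to f \); transporting each \( x_m \) along \( D(-, m) \) I may assume all representatives live over a common \( f \), i.e. \( \xi_m = [f, x_m] \) with \( x_m : D(f, m) \). Now compatibility says that for each generating morphism \( g : m \to m' \) of \( \cat{D} \) (there are finitely many generators \( \Phi_i \)), the elements \( D(f, g)(x_m) \) and \( x_{m'} \) become equal in \( \colim{\cat{F}} D(-, m') \); unwinding the colimit relation, there merely exists \( h_i : f \to f'_i \) equalizing them. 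Filteredness of \( \cat{F} \) again lets me find a single \( f' \) receiving all the \( f'_i \); after transporting along \( D(-, m) : D(f, m) \to D(f', m) \) for all \( m \), the resulting family \( (x'_m)_m \) satisfies \( D(f', g)(x'_m) = x'_{m'} \) for every generator \( g \), hence — using the factorization property in Definition~\ref{dfn:fg-precategory} and that \( D(f', -) \) is a functor — for \emph{every} morphism of \( \cat{D} \). Thus \( (x'_m)_m : \lim{\cat{D}} D(f', -) \), and its class maps to \( (\xi_m)_m \). The cleanest packaging of the ``find a common \( f' \) killing all discrepancies'' step is precisely Proposition~\ref{pro:weakly-finitely-cocomplete} applied to the finitely generated shape, which produces a cone and hence a cocone target absorbing all the needed coincidences at once.

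For \textbf{injectivity}: suppose \( (f, (x_m)_m) \) and \( (f', (x'_m)_m) \) are two limit-elements whose images agree in \( \lim{\cat{D}} \colim{\cat{F}} D \). Then for each \( m \), \( [f, x_m] = [f', x'_m] \) in \( \colim{\cat{F}} D(-, m) \), so there merely exists a zig-zag in \( \cat{F} \) identifying them; since objects of \( \cat{D} \) are finite and \( \cat{F} \) is filtered, I can choose a single \( f'' \) with \( f \to f'' \leftarrow f' \) and coequalize, so that after transport \( x_m \) and \( x'_m \) have the same image in \( D(f'', m) \) for all \( m \) — again using Proposition~\ref{pro:weakly-finitely-cocomplete} to handle the finitely many equalizations uniformly. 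This exhibits \( (f, (x_m)_m) \sim (f'', \text{common image}) \sim (f', (x'_m)_m) \) in the colimit, so the two classes agree. The main obstacle I anticipate is the bookkeeping in surjectivity: one must be careful that ``merely'' quantifiers are discharged in the right order (all data chosen \emph{before} invoking filteredness), that transporting representatives along \( D(-, m) \) is compatible with the colimit relation, and above all that coincidence on the finite generating set \( \Phi \) really does propagate to all morphisms of \( \cat{D} \) — this last point is where Definition~\ref{dfn:fg-precategory}'s factorization clause and the functoriality of \( D(f', -) \) do the essential work, and it is the step that genuinely uses ``finitely generated'' rather than merely ``finite''.
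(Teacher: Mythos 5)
Your proposal is correct and follows essentially the same route as the paper, which simply cites Borceux's Theorem~2.13.4 and notes that the argument goes through ``by careful use of finite choice'' with \cref{pro:weakly-finitely-cocomplete} handling the finitely generated case; your element-chasing via \cref{pro:set-limits}, the discharge of mere-existence under propositional goals with finite choice over the Bishop-finite objects of \( \cat{D} \), and the propagation from the generating set \( \Phi \) to all morphisms are exactly the ``straightforward modifications'' the paper has in mind. You have in effect written out the details the paper leaves to the reference.
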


\begin{proof}
  For finite categories \( \cat{D} \) the proof of~\cite[Theorem~2.13.4]{Bor94} goes through by careful use of finite choice.
  The generalisation to the when \( \cat{D} \) is finitely generated only requires straightforward modifications using \cref{pro:weakly-finitely-cocomplete} in the last part of Borceux' argument. 
\end{proof}

\begin{rmk}
  That filtered colimits commute with finite limits has been formalized in Mathlib~\cite{mathlib}.
  However, as opposed to HoTT, mathlib is based on a {\em classical} (as opposed to {\em constructive}) type theory assuming the law of the excluded middle and the axiom of choice.
\end{rmk}

As an application of our development thus far we have the following.
A group \( G \) is finitely generated if there exist a Bishop-finite generating set.
Recall that a $G$-set $X$ is simply a map \(X : BG \to \hSet \), and the fixed points of $X$ are given by \( \Pi_{BG} X \).

\begin{cor} \label{cor:fixpoints-filtered}
  Let \( G \) be a finitely generated group, and let \( X : \cat{F} \to (BG \to \hSet) \) be a filtered diagram of $G$-sets.
  The fixed points of the colimit is the colimit of the fixed points:
  \[ \Pi_{BG} \colim{\cat{F}} X \simeq \colim{x : \cat{F}} \Pi_{BG} X(x) \]
\end{cor}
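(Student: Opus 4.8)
The plan is to recognise the fixed-point functor $\Pi_{BG}(-)$ as a limit indexed by a \emph{finitely generated} category and then feed the situation into \cref{thm:filtered-colims-commute}. Write $\mathbf{B}G$ for the one-object strict category with $\mathbf{B}G(\ast,\ast)\defeq G$ and composition given by multiplication in $G$; it is a strict category since its type of objects is $\mathbf{1}$. The canonical functor $\mathbf{B}G\to BG$ is fully faithful, because $(\on{pt}=_{BG}\on{pt})\simeq G$, and essentially surjective, because $BG$ is connected, so it is a weak equivalence. Hence by \cref{lem:groupoid-indexed-functor} and \cite[Theorem~9.9.4]{hottbook} the category $(BG\to\hSet)$ of $G$-sets is equivalent to the functor category $\hSet^{\mathbf{B}G}$, and under this equivalence: (i) by \cref{pro:set-limits} the limit of the functor $W$ corresponding to a $G$-set $Y$ is $\lim{\mathbf{B}G}W=\{\,w:W(\ast)\mid\Pi_{g:G}\,W_g(w)=w\,\}$, which is precisely the fixed-point set $\Pi_{BG}Y$ (unwinding \cref{lem:groupoid-indexed-functor}); and (ii) since the equivalence is one of categories and colimits of $\hSet$-valued functors are computed pointwise, it carries $\colim{\cat F}$ of a diagram of $G$-sets to $\colim{\cat F}$ formed on the underlying sets with the induced $G$-action.

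Next I would check that $\mathbf{B}G$ is finitely generated in the sense of \cref{dfn:fg-precategory}. Its type of objects is $\mathbf{1}$, which is Bishop-finite. Since $G$ is finitely generated, fix a Bishop-finite subset $S\subseteq G$ that generates; take $I\defeq S+S$ (two copies of $S$), which is again Bishop-finite, and let $\Phi:\Pi_{i:I}\mathbf{B}G(\ast,\ast)$ send the two copies of $s:S$ to $s$ and to $s^{-1}$. The factorisation clause of \cref{dfn:fg-precategory} then asserts that every element of $G$ merely equals a finite product $\Phi_{j(n-1)}\cdots\Phi_{j(0)}$ of the chosen generators and their inverses, which is exactly the hypothesis that $S$ generates $G$. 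So $\mathbf{B}G$ is a finitely generated category.

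Finally, transpose the given filtered diagram $X:\cat F\to(BG\to\hSet)\simeq\hSet^{\mathbf{B}G}$ into a functor $D:\cat F\times\mathbf{B}G\to\hSet$ and apply \cref{thm:filtered-colims-commute}, which yields a bijection $\colim{\cat F}\lim{\mathbf{B}G}D\simeq\lim{\mathbf{B}G}\colim{\cat F}D$. By (i) the left-hand side is $\colim{x:\cat F}\Pi_{BG}X(x)$, and by (i) together with (ii) the right-hand side is $\Pi_{BG}\colim{\cat F}X$; a routine unwinding then identifies the resulting bijection with the natural comparison map of the statement. I expect the only genuinely new ingredient to be the observation in (i)--(ii) that $\Pi_{BG}$ is the finitely generated limit $\lim{\mathbf{B}G}$ --- this is precisely where finite generation of $G$ enters, and the reason the generators must be assembled together with their formal inverses into a single Bishop-finite index set $I$ --- after which \cref{thm:filtered-colims-commute} does all the real work, the rest being bookkeeping with \cref{lem:groupoid-indexed-functor} and \cref{pro:set-limits}.
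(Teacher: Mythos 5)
Your proof is correct and follows essentially the same route as the paper: replace $BG$ by the strict one-object category $B'G$ with endomorphisms $G$, observe it is finitely generated, identify $\Pi_{BG}$ with the limit over $B'G$ via \cref{lem:groupoid-indexed-functor} and \cref{lem:rezk-limit-invariant}, and apply \cref{thm:filtered-colims-commute}. Your explicit check that the generators must be assembled with their inverses into a single Bishop-finite index set is a detail the paper leaves implicit, but it is the same argument.
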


\begin{proof}
  The category \( BG \) is the Rezk completion of the strict category \( B'G \) which has a single object with \( G \) as its endomorphisms.
  If \( G \) is a finitely generated group, then \( B'G \) is a finitely generated category in the sense of Definition~\ref{dfn:fg-precategory}.
  By \cref{lem:groupoid-indexed-functor} we have that \( \Pi_{BG}(-) = \lim{BG} (-) \), and by \cref{lem:rezk-limit-invariant} we can change the limits to be over \( B'G \).
  We conclude by the previous theorem, since \( B'G \) is finitely generated and \( \cat{F} \) is filtered.
\end{proof}

\section{The internal AB axioms}
\label{sec:ab}
The goal of this section is to show that for a ring \( R \) in HoTT, the category of $R$-modules satisfies the axioms AB3 through AB5 and has a generator---meaning it is a Grothendieck category (\cref{dfn:ab-n}).
Formally, a Grothendieck category is only assumed to satisfy AB3 and AB5, but we show that AB4 follows from AB5 (\cref{thm:coproduct-lex}).
It is straightforward to check that \( \Mod{R} \) is an abelian category in HoTT, and indeed this has already been formalised for \( R \jeq \Zb \) in the UniMath library~\cite{unimath}.
Moreover, \( R \) being a generator is simply a restatement of function extensionality.
What remains is to show that \( \Mod{R} \) satisfies the axioms AB3 through AB5.

We wish to treat families \( A : X \to \cat{A} \) in an abelian category \( \cat{A} \) indexed by an arbitrary type \( X \).
As pointed out at the beginning of Section~\ref{sec:sifted-and-filtered-precategories}, these are functors from an \( \infty \)-groupoid into a category.
Since \( \cat{A} \) is a category, its underlying type is $1$-truncated, and so we may factor any such family \( A \) through the $1$-truncation of $X$.
One checks that the $1$-truncation map \( \trel{-}_1 : X \to \tr{X}_1 \) induces an equivalence of categories by precomposition:
\[ \trel{-}_1^* : \cat{A}^{\tr{X}_1} \to \cat{A}^X \]
It follows, by an argument similar to the one in Lemma~\ref{lem:rezk-limit-invariant}, that the limit (resp. colimit) of a functor\footnote{Limits and colimits of functors from an \( \infty \)-groupoid into a category are defined in the obvious way.} \( A : X \to \cat{A} \) coincides with the limit (resp. colimit) of the $1$-truncation \( \trel{A}_1 : \tr{X}_1 \to \cat{A} \).
Thus when we discuss limits and colimits of such a family \( A \), we may assume that \( X \) is a $1$-type without loss of generality.

\subsection{Grothendieck categories}
\label{ssec:grothendieck-categories}

We define Grothendieck abelian categories in homotopy type theory, assuming the reader is familiar with additive and abelian precategories (whose definition can be found in~\cite{unimath}).
Be aware that by abelian {\em category} we do mean that it is a (univalent) category.
While much of our discussion likely works for abelian precategories as well, we are particularly interested in discussing families of objects,  which is most naturally done for categories.

\begin{dfn} \label{dfn:coproducts}
  Let \( \cat{A} \) be an additive category, and \( X \) a set.
  For a family \( A : X \to \cat{A} \), 
  the {\bf coproduct of \( A \)} (if it exists) is the colimit of \( A \), denoted \( \bigoplus_{x:X} A(x) \).
  Dually, the {\bf product of \( A \)} (if it exists) is the limit of \( A \), denoted \( \Pi_{x:X} A(x) \).
  If no confusion will arise, we often leave the variable \( x : X\) implicit.
\end{dfn}

Suppose \( \cat{A} \) is an additive category.
Then, by definition, finite products and coproducts in \( \cat{A} \) coincide, and we call these biproducts.
The word {\em finite} here means ``finitely iterated,'' i.e.\ pairwise biproducts carried out a finite number of times.

If \( X \) is a decidable set, and \( A : X \to \cat{A} \) is a family, then there is always a comparison map \( m : \bigoplus_X A \to \Pi_X A \) which is a monomorphism.
This is straightforward to prove in HoTT, and has been proved for families of modules in an elementary topos (with \( \Nb \)) by Tavakoli~\cite{Tav85}.
Of course, if \( X \) is the standard \(n\)-element set \( \fin{n} \) for some \( n : \Nb \), then the map $m$ is an isomorphism.
We deduce the following, since $m$ being an isomorphism is a proposition:

\begin{lem}
  Let \( \cat{A} \) be an additive category, and \( X \) a Bishop-finite type.
  For any family \( A : X \to \cat{A} \), the natural map \( m : \bigoplus_X A \to \Pi_X A \) is an isomorphism.
\end{lem}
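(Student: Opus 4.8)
The plan is to reduce the statement to the already-known case of the standard finite set \( \fin{n} \), using that ``\( m \) is an isomorphism'' is a proposition. Recall \( X \) being Bishop-finite means that merely there exists some \( n : \Nb \) and an equivalence \( e : X \simeq \fin{n} \). Since being an isomorphism is a mere proposition (in a category, the type of being-an-iso of a fixed morphism is a proposition), we may strip the propositional truncation and assume we are given an actual such \( n \) and \( e \).

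First I would transport the family \( A : X \to \cat{A} \) along \( e \) to obtain \( A' \defeq A \circ e^{-1} : \fin{n} \to \cat{A} \). The equivalence \( e \) induces, by precomposition, an isomorphism of functor categories \( \cat{A}^{\fin{n}} \simeq \cat{A}^X \) (as in the discussion preceding the lemma, or directly since \( e \) is an equivalence of the underlying groupoids); hence it identifies the colimit of \( A \) with the colimit of \( A' \) and the limit of \( A \) with the limit of \( A' \), compatibly with the comparison maps \( m \). So it suffices to treat the case \( X \jeq \fin{n} \).

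Next I would handle \( X \jeq \fin{n} \) by a straightforward induction on \( n : \Nb \). For \( n \jeq 0 \), both \( \bigoplus_{\fin{0}} A \) and \( \Pi_{\fin{0}} A \) are the zero object and \( m \) is the identity. For the inductive step, write \( \fin{n+1} \simeq \fin{n} + \fin{1} \), so that \( \bigoplus_{\fin{n+1}} A \simeq (\bigoplus_{\fin{n}} A) \oplus A(n) \) and \( \Pi_{\fin{n+1}} A \simeq (\Pi_{\fin{n}} A) \times A(n) \); since \( \cat{A} \) is additive these are both the biproduct, and under this identification the comparison map \( m \) for \( \fin{n+1} \) is the biproduct (direct sum) of the comparison map \( m \) for \( \fin{n} \) with the identity on \( A(n) \). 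By the inductive hypothesis the first summand is an isomorphism, and a biproduct of isomorphisms is an isomorphism, completing the induction.

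The main obstacle, such as it is, is bookkeeping rather than mathematics: one must be careful that the ``natural comparison map \( m \)'' is stable under the identifications used—both under transport along the equivalence \( e \) and under the decomposition \( \fin{n+1} \simeq \fin{n} + \fin{1} \)—which amounts to unwinding the naturality of \( m \) with respect to reindexing along functions of index types. Since \( m \) is defined via the universal properties of the (co)limits, this compatibility is formal, and the proposition-ness of ``is an iso'' lets us avoid tracking coherence data. Thus no genuine difficulty arises beyond these routine verifications.
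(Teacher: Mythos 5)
Your proposal is correct and follows essentially the same route as the paper: the paper deduces the lemma from the $\fin{n}$ case precisely by observing that ``$m$ is an isomorphism'' is a proposition, so the truncated equivalence $X \simeq \fin{n}$ may be untruncated and used to transport. The only difference is that the paper dismisses the $\fin{n}$ case with ``of course,'' whereas you supply the induction on $n$ explicitly; that is a reasonable filling-in rather than a genuinely different argument.
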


It may come as a surprise that no such monomorphism $m$ need exist in general.
In fact, Harting demonstrates in~\cite[Remark~2.1]{Har82} that there might be no non-trivial map like $m$.
Her example is in the Sierpinski $1$-topos, but can be translated to the Sierpinski $\infty$-topos, which is a model of HoTT.
It follows that constructing a non-zero map \( \bigoplus_X A \to \Pi_X A \) for a general set \( X \) is impossible in HoTT.
Harting's example also demonstrates that the construction of arbitrary coproducts is tricky; for example, one cannot carve out \( \bigoplus_X A \) from \( \Pi_X A\) as those families with ``finite support.''

\begin{dfn} \label{dfn:ab-n}
  For an abelian category \( \cat{A} \) we have the following axioms.
  \begin{itemize}
  \item[(AB3)] for any small set \( X \) and family \( A : X \to \cat{A} \), the coproduct \( \bigoplus_X A \) exists in \( \cat{A} \);
  \end{itemize}

  Assuming \( \cat{A} \) satisfies AB3, we may moreover ask for:
  \begin{itemize}
  \item[(AB4)] for any small set \( X \), and any two families \( A, B : X \to \cat{A} \) along with a family of monomorphisms \( \eta : \Pi_{x : X} A(x) \to B(x) \), the induced map \( \bigoplus_X \eta : \bigoplus_X A \to \bigoplus_X B \) is a monomorphism;
  \end{itemize}

  If \( \cat{A} \) satisfies AB3, then it is automatically cocomplete\footnote{A general colimit can be computed via coproducts and coequalizers.} and we may ask for:
  \begin{itemize}
  \item[(AB5)] for any small filtered precategory \( \cat{F} \) and diagram \( F : \cat{F} \to \cat{A} \), the functor \( \colim{\cat{F}} : \cat{A}^\cat{F} \to \cat{A} \) preserves finite limits.
  \end{itemize} 
  A {\bf generator} of \( \cat{A} \) is an object \( G : \cat{A} \) such that for any two morphisms \( f, f' : A \to B \), we have
  \[ (\Pi_{g : G \to A} f g = f' g) \to  (f = f') \]
  The abelian category \( \cat{A} \) is {\bf Grothendieck} if it satisfies the axioms AB3 and AB5, and has a specified generator.
\end{dfn}

The axiom AB5 implies that the colimit functor \( \colim{\cat{F}} \) is exact for filtered precategories \( \cat{F} \).
In the next section we show that \( \Mod{R} \) is Grothendieck for any ring \( R \).

\subsection{Colimits of \texorpdfstring{$R$}{R}-modules}
\label{ssec:colimits-R-modules}

We consider a $1$-type \( X \) as a category and construct an adjunction:
\[ \colim{X} : \Mod{R}^X \adj \Mod{R} : \const{X} \]
When \( R \jeq \Zb \) and \( X \) is pointed and connected, \( \Mod{R}^X \) is the category of \( \pi_1(X) \)-modules.
We will see that the functor \( \colim{X} \) computes the {\em coinvariants} of a \( \pi_1(X) \)-module.
Dually, the functor \( \lim{X} \) computes the {\em invariants} (but \( \lim{X} \) needs no discussion: it is simply the limit of the underlying sets).
   
As in classical algebra, the forgetful functor \( U : \Mod{R} \to \Ab \) reflects limits and colimits.
Thus by constructing \( \colim{X} \) for families of abelian groups, we extend it to families of modules via \( U \).

\begin{pro} \label{pro:colimit-ab}
  Let \( X \) be a small $1$-type.
  We have an adjunction
  \( \colim{X} : \Ab^X \adj \Ab : \const{X} \).
\end{pro}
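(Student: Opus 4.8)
The plan is to construct the functor $\colim{X}$ directly and exhibit the adjunction by a unit–counit argument, then verify the triangle identities. First I would recall that since $X$ is a $1$-type and $\Ab$ is a category, a family $A : X \to \Ab$ is the same data as a functor, so we may speak of its colimit in the sense of \cref{dfn:limit-hott}. The key point is that for each fixed $A : \Ab^X$, the colimit $\colim{X} A$ exists. I would produce it as a quotient of the free abelian group construction: concretely, take the set-level colimit $\tr{\Sigma_{x:X} U(A(x))}_0$ of the underlying sets (which exists by the groupoid-indexed case of \cref{pro:set-limits}, since $X$ is a groupoid), then freely generate an abelian group on it and quotient by the relations forcing the inclusions $A(x) \to \colim{X} A$ to be group homomorphisms and compatible with transport along paths in $X$. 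Equivalently — and more cleanly — I would present $\colim{X} A$ as the cokernel, in $\Ab$, of a suitable map between coproducts of free abelian groups, using that $\Ab$ is cocomplete; but since cocompleteness of $\Ab$ over arbitrary sets is exactly the kind of thing one wants to establish, I would instead go the explicit route and build the group structure by hand on the set-level colimit of the free abelian groups $\Zb[U A(x)]$, then quotient.

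Next I would check the universal property, which is the heart of the adjunction. A morphism $\colim{X} A \to B$ in $\Ab$ should correspond naturally to a natural transformation $A \Ra \const{X}(B)$, i.e.\ to an element of $\Ab^X(A, \const{X}(B))$. By \cref{lem:groupoid-indexed-functor} applied to the groupoid $X$, such a natural transformation is equivalently a dependent function $\Pi_{x:X}\, \Ab(A(x), B)$ together with the coherence making it respect transport — but that coherence is automatic by path induction, just as in the proof that the groupoid-indexed limit is the $\Pi$-type. So $\Ab^X(A, \const{X}(B)) \simeq \Pi_{x:X}\, \Ab(A(x), B)$, and I must show this matches $\Ab(\colim{X} A, B)$. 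This is where the explicit construction pays off: a homomorphism out of the presented group is exactly a compatible family of homomorphisms out of each $A(x)$ respecting the path-relations, and the path-relations are again tautological by path induction. Naturality in both variables is then a routine check. Having the natural isomorphism $\Ab(\colim{X} A, B) \simeq \Ab^X(A, \const{X}(B))$ for all $A, B$ gives the adjunction $\colim{X} \dashv \const{X}$ by the standard hom-set characterisation; functoriality of $\colim{X}$ on morphisms of families follows from the universal property, or may be read off the construction.

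The main obstacle I anticipate is the constructive subtlety flagged in \cref{ssec:grothendieck-categories}: one cannot describe $\colim{X} A$ as the ``finitely supported'' elements of $\Pi_{x:X} A(x)$, so the naive classical construction is unavailable, and I must be careful that the set-quotient presentation genuinely carries a well-defined abelian group structure and genuinely has the universal property — in particular that the relations quotiented out form a well-defined (propositionally truncated) relation on the $\Sigma$-type, so that the quotient is a set. This is exactly the point where Harting's approach, via $HX$, enters in the next subsection, but for the bare existence of the adjunction the direct free-plus-relations construction suffices. A secondary point requiring care is smallness: since $X$ is a small $1$-type and $R$ (hence $\Zb$) is small, the generators and relations form small sets, so $\colim{X} A$ is a legitimate small abelian group and $\Ab^X$, $\Ab$ stay within the same universe. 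Once these are handled, everything else is path induction and bookkeeping.
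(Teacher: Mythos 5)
Your proposal reaches the right statement but by a genuinely different route from the paper. The paper does not build \( \colim{X} A \) by generators and relations at all: it transports the problem along the equivalence of \cite{BvDR} between abelian groups and pointed \(1\)-connected \(2\)-types, computes the colimit of the family \( \K{A}{2} \) there as the \(2\)-truncated indexed wedge \( \tr{\bigvee_X \K{A}{2}}_2 \) (the cofibre of \( X \to \Sigma_X \K{A}{2} \)), checks by a truncation argument that this wedge is \(1\)-connected, and then defines \( \colim{X}(A) \defeq \pi_2\big(\bigvee_X \K{A}{2}\big) \). Since every step is a composite of equivalences and left adjoints, functoriality and the adjunction come for free; the only real work is the connectivity check. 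Your construction stays inside \( \Ab \): the free abelian group on the set-level colimit \( \tr{\Sigma_{x:X} UA(x)}_0 \), modulo the relations making each \( A(x) \to \colim{X} A \) a homomorphism. This is a valid alternative — free abelian groups and subgroups generated by a family of elements exist as set-quotients in HoTT, and your reduction of the universal property to \( \Ab^X(A,\const{X}(B)) \simeq \Pi_{x:X}\Ab(A(x),B) \) is correct, since naturality is a proposition and holds by path induction — and it buys elementarity (no Eilenberg--MacLane machinery) at the cost of more hands-on verification. One caveat: your second formulation, building a group structure ``by hand on the set-level colimit of the free abelian groups \( \Zb[UA(x)] \),'' fails as literally stated — for \( X \) a set that colimit is just \( \Sigma_{x:X}\Zb[UA(x)] \), which carries no addition across fibres. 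The correct order of operations is the one in your first sentence: first the colimit of the underlying sets, then the free abelian group, then the quotient.
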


\begin{proof}
  We start by constructing the functor \( \colim{X} \).
  Let \( A : X \to \Ab \).
  Via Theorem~4 of~\cite{BvDR}, we may instead consider the corresponding family $\K{A}{2}$ of pointed, $1$-connected $2$-types.
  The colimit of \( \K{A}{2} \) among all types is then \( \Sigma_{X} \K{A}{2} \) by \cref{lem:groupoid-indexed-functor}, whereas the colimit among pointed types is the pushout 
  \[ \begin{tikzcd}
      X \rar["\pt{}"] \dar \ar[dr, "\ulcorner" at end, phantom] & \Sigma_{X} \K{A}{2} \dar[dashed] \\
      1 \rar[dashed] & \bigvee_{X} \K{A}{2}
      \end{tikzcd} \]
    called the {\em indexed wedge}.
    Thus the colimit of $\K{A}{2}$ among pointed $2$-types is $\tr{ \bigvee_{X} \K{A}{2} }_2$ by~\cite[Section~7.4]{hottbook}.
    Moreover, by Theorem~7.3.9 in~\cite{hottbook} we have that 
    \[ \tr{\Sigma_{X} \K{A}{2}}_1 \simeq \tr{\Sigma_{x:X} \tr{\K{A(x)}{2}}_1 }_1 \simeq X \]
    using that \( \K{A(x)}{2} \) is $1$-connected for all \( x : X \).
    From this we deduce that $1$-truncating the pushout square above produces
  \[ \begin{tikzcd}
      X \rar["\id{}"] \dar \ar[dr, "\ulcorner" at end, phantom] & X \dar[dashed] \\
      1 \rar[dashed] & 1
      \end{tikzcd} \]
    since pushouts commute with truncation.
    In particular, \( \tr{ \bigvee_{X} \K{A}{2} }_1 \) is $1$-connected.
    Finally, since \( \tr{ \bigvee_{X} \K{A}{2} }_2 \) has the desired universal property among pointed $2$-types, it certainly has it among pointed, $1$-connected $2$-types, being one itself.
    Now we apply \( \pi_2 \), the inverse to \( \K{-}{2} \), to define our functor on objects:
    \[ \colim{X}(A) \defeq \pi_2 \left ( \bigvee_{X} \K{A}{2} \right ) \]

    As defined, \( \colim{X} \) is a composite of the functors, hence is itself a functor.
\end{proof}

\begin{cor} \label{cor:modR-bicomplete}
  Let \( R \) be a ring.
  The category \( \Mod{R} \) is complete and cocomplete.
\end{cor}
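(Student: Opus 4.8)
The plan is to reduce everything to the existence of \emph{small products} and \emph{small coproducts}. Recall the standard criterion: a category with all small products and all equalizers is complete, since an arbitrary limit \( \lim{\cat{D}} D \) over a small precategory \( \cat{D} \) is computed as an equalizer of a parallel pair of maps between two small products — this is the construction used in the proof of \cref{pro:set-limits}, now carried out in \( \Mod{R} \) rather than \( \hSet \). Dually, small coproducts together with all coequalizers give cocompleteness. Since \( \Mod{R} \) is an abelian category it has a zero object, kernels and cokernels, so the equalizer of a parallel pair \( f, g : A \to B \) is the kernel of \( f - g \) and the coequalizer is the cokernel of \( f - g \). Hence it remains only to exhibit arbitrary small products and coproducts.

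For products I would argue directly: given a small set \( X \) and a family \( A : X \to \Mod{R} \), the dependent product \( \Pi_{x:X} A(x) \) — a set, since each \( A(x) \) is — carries the pointwise \( R \)-module structure, and the projections form a universal cone, as one checks using function extensionality (this is the description of limits of sets from \cref{pro:set-limits}, transported along the faithful forgetful functor \( \Mod{R} \to \hSet \)). For coproducts I would invoke \cref{pro:colimit-ab}: a set \( X \) is in particular a small \( 1 \)-type, so that proposition supplies the colimit of any family \( X \to \Ab \), and for \( X \) a set this colimit is exactly the coproduct \( \bigoplus_X \). Applying this through the forgetful functor \( U : \Mod{R} \to \Ab \), which reflects colimits, then produces \( \bigoplus_X A \) in \( \Mod{R} \).

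The one point needing care — and the main (if mild) obstacle — is transferring the \( R \)-module structure: for \( r : R \), the scalarwise multiplications \( r\cdot(-) : A(x) \to A(x) \) assemble into a natural endomorphism of the \( \Ab \)-valued diagram \( U A \), hence by functoriality of \( \colim{X} \) into an endomorphism of \( \colim{X}(UA) \), and \( r \mapsto (r\cdot -) \) is readily seen to be a ring homomorphism \( R \to \on{End}_{\Ab}(\colim{X}(UA)) \); this is the sought module structure. The structure maps \( A(x) \to \colim{X}(UA) \) are then \( R \)-linear by naturality, and after applying \( U \) they form a colimit cocone, so since \( U \) reflects colimits this exhibits \( \colim{X}(UA) \) as the coproduct of \( A \) in \( \Mod{R} \). (Alternatively one could observe that \( U \) is monadic with monad \( R \otimes_{\Zb} (-) \), which preserves colimits, so \( U \) creates them — but the direct argument above avoids developing monadicity in HoTT.) Combining the existence of equalizers, coequalizers, small products and small coproducts, we conclude that \( \Mod{R} \) is complete and cocomplete.
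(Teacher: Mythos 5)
Your proof is correct and follows essentially the same route as the paper: completeness and cocompleteness are reduced to small products/coproducts plus (co)equalizers, products are given by \( \Pi \)-types with pointwise structure, coproducts come from \cref{pro:colimit-ab} applied to a set, and everything is lifted from \( \Ab \) to \( \Mod{R} \) along the forgetful functor (the paper states this lifting in the paragraph preceding \cref{pro:colimit-ab}). Your explicit construction of the \( R \)-module structure on \( \colim{X}(UA) \) just fills in a step the paper leaves implicit.
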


\begin{proof}
  We reduce to \( R \jeq \Zb \) since the forgetful functor reflects both limits and colimits.
  For limits, note that \( \Ab \) has small products given simply by the \( \Pi \)-type associated to a family \( X \to \Ab \) indexed by a set.
  Since \( \Ab \) has equalizers, it is complete.
  Dually, Proposition~\ref{pro:colimit-ab} produces small coproducts by letting \(X\) be a set.
  Since \( \Ab \) has coequalizers, it is cocomplete.
\end{proof}

\begin{thm} \label{thm:mod-R-grothendieck}
  The category \( \Mod{R} \) is Grothendieck.
\end{thm}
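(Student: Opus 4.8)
\emph{Proof proposal.} The plan is to unwind \cref{dfn:ab-n}: we must check that \( \Mod{R} \) is abelian, satisfies AB3 and AB5, and carries a specified generator. (Note that AB4 is not part of the definition; it will follow afterwards from \cref{thm:coproduct-lex}.) That \( \Mod{R} \) is abelian is essentially known — it is routine to verify the additive and abelian axioms, and the case \( R \jeq \Zb \) is formalised in UniMath~\cite{unimath}; the only addition is univalence, which for \( \Mod{R} \) follows from the structure identity principle, since an \(R\)-module homomorphism is a function equipped with propositional structure. AB3 is then immediate from \cref{cor:modR-bicomplete}, or directly from \cref{pro:colimit-ab} with the \(1\)-type there instantiated at a small set: \( \Ab \) has small coproducts and the forgetful functor \( \Mod{R} \to \Ab \) reflects colimits.

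For the generator I would take \( R \) itself. Given parallel \(R\)-module maps \( f, f' : A \to B \) with \( f g = f' g \) for all module maps \( g : R \to A \), note that each \( a : A \) determines such a \( g \), namely \( r \mapsto r a \), with \( g(1) = a \); since \( fg \) and \( f'g \) agree as functions, they agree at \( 1 \), so \( f(a) = f'(a) \). By function extensionality \( f = f' \) as functions, hence as module homomorphisms. This is precisely the sense in which ``\( R \) is a generator'' restates function extensionality.

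The substantive point is AB5, that filtered colimits preserve finite limits in \( \Mod{R} \). I would transport this to \( \hSet \) along the composite forgetful functor \( U : \Mod{R} \to \hSet \). This \( U \) is conservative — a module map whose underlying function is a bijection is an isomorphism — and it preserves finite limits, being a composite of right adjoints. Crucially it also preserves filtered colimits: a filtered precategory is sifted, and a sifted colimit of \(R\)-modules is computed on underlying sets by the same argument as \cref{cor:sifted-preserve-groups}, since the operations and axioms of an \(R\)-module object are expressible by finite-limit diagrams and are therefore transported by any finite-product-preserving functor, in particular by a sifted colimit of sets. Consequently, applying \( U \) to the canonical comparison map \( \colim{\cat{F}} \lim{\cat{D}} D \to \lim{\cat{D}} \colim{\cat{F}} D \), for \( \cat{F} \) filtered and \( \cat{D} \) a finite diagram shape, yields the comparison map of the underlying sets, which is a bijection by \cref{thm:filtered-colims-commute} — a finite category being in particular finitely generated. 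Since \( U \) is conservative, the original comparison map is an isomorphism, giving AB5.

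I expect the main obstacle to be exactly this last preservation property: making sure that the ``sifted colimits are computed pointwise'' argument of \cref{cor:sifted-preserve-groups}, stated there for groups, really does apply to \(R\)-modules — i.e.\ that all of the \(R\)-module structure and axioms live in diagrams preserved by finite-product-preserving functors. Granting that, the verification of AB5 reduces cleanly to \cref{thm:filtered-colims-commute}, and the theorem follows.
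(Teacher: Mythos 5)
Your proposal is correct and follows essentially the same route as the paper: AB3 from \cref{cor:modR-bicomplete}, the generator from function extensionality, and AB5 by reducing to \cref{thm:filtered-colims-commute} via the fact that filtered (hence sifted) colimits of \(R\)-modules are computed on underlying sets. The paper states these steps tersely; your write-up just fills in the details (conservativity and limit-preservation of the forgetful functor) that the paper leaves implicit.
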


\begin{proof}
  That \( R \) is a generator is an immediate consequence of function extensionality.
  By the previous corollary, \( \Mod{R} \) is cocomplete and therefore satisfies AB3.
  The axiom AB5 follows from Theorem~\ref{thm:filtered-colims-commute}, since filtered colimits of $R$-modules may be computed on the underlying sets.
\end{proof}

At this point, it is not obvious that \( \Mod{R} \) satisfies AB4.
This is shown in the next section (\cref{thm:coproduct-lex}).
In the remaining part of this section we discuss \( \colim{X} \) when \( X \) is the classifying space of a group.

\begin{dfn}
  Let \( G \) be a group.
  A family \( A : BG \to \Ab \) is a {\bf \( G \)-module}.
  The {\bf invariants} of $A$ comprise the abelian group \( A_G \defeq \lim{BG} A \), and the {\bf coinvariants} comprise the abelian group \( A^G \defeq \colim{BG} A \).
\end{dfn}

Using the fact that limits of abelian groups may be computed on the underlying sets, along with the concrete description of limits in Proposition~\ref{pro:set-limits}, we see that 
\( A_G = \{ a : A \mid \Pi_{g: G} ga = a \} \),
which is the usual definition of the invariants.
Writing \( \colim{BG} A \) as a coequalizer produces
\[ \begin{tikzcd}
    \bigoplus_{g : G} A \rar[shift left, "a \mapsto ga"] \rar[shift right, "a \mapsto a" swap] & A \rar[dashed] & A^G
  \end{tikzcd} \]
from which we see that \( A^G \) is the quotient of \( A \) by the subgroup \( \langle a - ga \mid g : G \rangle \), which is the usual definition of the coinvariants.

\begin{rmk}
After Definition~\ref{dfn:coproducts}, we discussed Harting's counterexample to the existence of a monomorphism \( \bigoplus_X A \to \Pi_X A \).
The obstruction is set-theoretic, namely decidability of \( X \).
Of course, this also means there is in general no monomorphism \( \colim{X} A \to \lim{X} A \), but it is much easier to produce a counterexample to this.
For example, if we consider the \( \Zb / 2 \)-module \( \Zb \) given by the negation action, then the coinvariants are \( \Zb^G = \Zb / 2 \) but the invariants are \( \Zb_G = 0 \).
\end{rmk}

In~\cite{Har82}, Harting carried out her specific construction of the internal coproduct of abelian groups so as to prove that the resulting coproduct functor was left-exact (in particular, it preserves monomorphisms).
For us, the internal coproduct is \( \colim{X} \) for a set \( X \).
In the next section we generalise (the analogue of) Harting's result by proving that any abelian category which satisfies AB3 and AB5 has a left-exact coproduct functor.
Before doing so, we demonstrate that \( \colim{X} \) generally fails to be left-exact when \( X \) is not a set.

\begin{exa} \label{exa:not-lex}
  Let \( G \defeq \Zb / 2 \).
  A \( G \)-module is then an abelian group equipped with an automorphism which squares to the identity.
  Consider the \( G \)-module \( \Zb \) equipped with the negation automorphism \( n \mapsto -n \), and the \( G \)-module \( \Zb \times \Zb \) equipped with the ``swap'' automorphism \( (a,b) \mapsto (b,a) \).
  We have a \( G \)-equivariant monomorphism \( 1 \mapsto (-1, 1) : \Zb \to \Zb \times \Zb \) which fails to induce a monomorphism on the coinvariants.
  Explicitly, the respective coinvariants are \( \Zb^G = \Zb / 2 \) and \( (\Zb \times \Zb)^G = \Zb \).
  Of course, there are of course no non-trivial maps \( \Zb/2 \to \Zb \), and certainly no monomorphisms.
  Consequently the functor \( \colim{B (\Zb / 2)} \), which computes the coinvariants, is not left-exact.
\end{exa}

\subsection{AB5 implies AB4}
\label{ssec:coproducts-a-la-harting}
We prove that AB4 follows from AB5 for any abelian category, as is familiar in ordinary homological algebra.
The classical proof goes by replacing a discrete indexing category \( X \) (for a coproduct) by a filtered category (the finite subsets of \( X \)) sharing the same colimit, then applying AB5.
However, in a constructive setting neither the category of Bishop-finite subsets of \(X\), nor the category of ordered finite subsets of \(X\), form filtered categories unless \(X\) is decidable.
For this reason we will work with ordered finite sub-{\em multisets}, i.e.\ general maps of the form \( \fin{n} \to X \) as opposed to only the injections.

We wish to point out that this is how Harting constructs the {\em internal coproduct} of abelian groups in an elementary topos (with \( \Nb \)) in~\cite{Har82}, though she does not phrase things in terms of the AB axioms.
While the goal of this construction is to realise a coproduct as a filtered colimit, we find it interesting to observe that Harting's ``set-theoretic'' description readily generalises to untruncated indexing types, as well as abelian categories \( \cat{A} \).
Specifically, given a family \( A : X \to \cat{A} \) indexed by an arbitrary type $X$, we replace \( A \) by a sifted diagram \( GA : HX \to \cat{A} \) sharing the same colimit (if it exists).
If \( X \) is a set, so the colimit is the coproduct, then \( \bigoplus_{x:X} A(x) \) will be a filtered colimit, as desired.

Our first objective is to define the precategory \( HX \) of finite sub-multisets of any $1$-type \( X \).
In general \( HX \) will be sifted, and even filtered when \( X \) is a set.
The latter situation is essentially the one studied in~\cite[177--178]{JW78}.
Throughout this section, let \( X \) be a $1$-type (unless otherwise stated), and let \( \cat{A} \) be an abelian category.
We implicitly identify \( X^n \) and \( \fin{n} \to X \) where \( \fin{n} \) is the standard \(n\)-element set.

\begin{dfn} \label{dfn:HX}
We make the $1$-type \( \Sigma_{n: \Nb} X^n \) into a precategory \( HX \) by letting the morphisms be commuting triangles (with specified witness of commutativity):
\[ (n,x) \to (m,y) \defeq  \Sigma_{f : \fin{n} \to \fin{m}} x =_{X^n} y \circ f \]
for \( (n,x) , (m,y) : \Sigma_{n : \Nb} X^n \).
Since $X$ is a $1$-type, so is \( X^n \).
Therefore the hom-type defined above is a set, as required for being a precategory.
Checking that this indeed defines a precategory is straightforward.
\end{dfn}

We observe the following lemma, which in general fails for the precategories of Bishop-finite or finite ordered subsets of \( X \).

\begin{lem} \label{lem:HX-sifted}
  The precategory \( HX \) has coproducts. In particular, it is sifted.
\end{lem}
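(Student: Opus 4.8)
The plan is to exhibit explicit coproducts in $HX$ and then invoke the lemma from Section~\ref{ssec:sifted} that a non-empty precategory with binary coproducts is sifted. Since $HX$ has the object $(0, !) : \Sigma_{n:\Nb} X^n$ (the empty tuple), it is non-empty, so the second claim follows from the first. For binary coproducts: given $(n,x)$ and $(m,y)$, the natural candidate is $(n+m, x \ast y)$ where $x \ast y : \fin{n+m} \to X$ is the concatenation of the tuples $x$ and $y$. The coprojections are the evident inclusions $\iota_1 : \fin{n} \hra \fin{n+m}$ and $\iota_2 : \fin{m} \hra \fin{n+m}$, each paired with the (reflexivity) witness that $x = (x \ast y) \circ \iota_1$ and $y = (x \ast y) \circ \iota_2$.

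First I would record that $HX$ has an initial object, namely $(0, !)$: for any $(m,y)$ there is a unique $f : \fin{0} \to \fin{m}$ and the commutativity condition $! =_{X^0} y \circ f$ holds uniquely since $X^0 \simeq 1$. Next I would verify the universal property of $(n+m, x\ast y)$. A pair of morphisms $(n,x) \to (k,z)$ and $(m,y) \to (k,z)$ consists of maps $f : \fin{n} \to \fin{k}$, $g : \fin{m} \to \fin{k}$ together with witnesses $x = z \circ f$ and $y = z \circ g$. By the universal property of $\fin{n+m}$ as a coproduct in $\hSet$ (or rather in the Bishop-finite sets), there is a unique $[f,g] : \fin{n+m} \to \fin{k}$ with $[f,g]\circ \iota_1 = f$ and $[f,g] \circ \iota_2 = g$; the required witness $x \ast y = z \circ [f,g]$ is obtained by combining the two given witnesses, again using the coproduct universal property at the level of the equations, which are in the set $X^{n+m}$. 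Uniqueness of the induced morphism in $HX$ reduces to uniqueness of $[f,g]$ in $\hSet$ together with the fact that the hom-types of $HX$ are sets (so the choice of commutativity witness carries no extra data beyond a proposition).

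The main obstacle is purely bookkeeping: concatenation of tuples and the coproduct structure on standard finite sets need to be set up so that the relevant equations typecheck, in particular the identification $x \ast y = z \circ [f,g]$ in $X^{n+m}$ must be assembled coherently from the two component identifications. Because everything in sight lives in a set (hom-types of $HX$ are sets, and $X^{n+m}$ is a set-indexed... rather, a $1$-type, but the equality types we manipulate are about elements of the set of functions into a $1$-type, hence are themselves sets or propositions as needed), there are no higher coherence issues to worry about, so once the explicit data is written down the verification is routine. I would then conclude siftedness directly by citing the earlier lemma, with no further work needed.
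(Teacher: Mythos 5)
The paper states this lemma without proof, treating it as an observation, and your construction---the empty tuple as initial object, concatenation \( (n+m, x\ast y) \) as the binary coproduct, and the earlier lemma that a non-empty precategory with binary coproducts is sifted---is exactly the intended argument; the existence half of the universal property is also set up correctly. The one step that needs repair is your justification of uniqueness. You assert that ``the choice of commutativity witness carries no extra data beyond a proposition,'' but this conflates two different things: the hom-type \( \Sigma_{h : \fin{n+m} \to \fin{k}}\, x\ast y = z \circ h \) is a \emph{set} because each fibre \( x \ast y =_{X^{n+m}} z \circ h \) is an identity type in the \(1\)-type \( X^{n+m} \), hence a set---but a set with possibly many elements, not a proposition. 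So knowing that the function component must be \( [f,g] \) does not by itself pin down the morphism of \( HX \). Uniqueness nevertheless holds, for a different reason: requiring that \( (h,w) \) composed with the two coprojections equal \( (f,p) \) and \( (g,q) \) forces the whiskerings of \( w \) along \( \iota_1 \) and \( \iota_2 \) to agree with \( p \) and \( q \) (modulo the coprojection witnesses), and since the equivalence \( X^{n+m} \simeq X^n \times X^m \) identifies a path in \( X^{n+m} \) with the pair of its component paths, \( w \) is completely determined by these two restrictions. With that half-step inserted, your proof is complete. (Note that this is only an issue for general \(1\)-types \( X \); when \( X \) is a set the witnesses really are propositions. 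But the lemma asserts coproducts, and hence siftedness, for arbitrary \(1\)-types, so the point is not vacuous.)
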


The next proposition is~\cite[Lemma~4.4]{JW78} translated to our setting.\footnote{The precise relation being that a presheaf is flat if and only if its category of elements (``total category'') is filtered; see also~\cite[Propositio n~1.3]{JW78}.}
Note that \( \Sigma_{n : \Nb} X^n \) is a set if $X$ is, and $HX$ is then a strict category.
In this situation, when discussing morphisms in \( HX \) we may omit references to the commutativity witnesses.

\begin{pro} \label{pro:HX-filtered}
  If \( X \) is a set, then \( HX \) is filtered.
\end{pro}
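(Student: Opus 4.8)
The plan is to verify the three clauses of filteredness directly, leveraging \cref{lem:HX-sifted} for the first two and doing real work only for the third. Throughout, write objects of \( HX \) as pairs \( (n,x) \) with \( x : \fin{n} \to X \), and recall that since \( X \) is a set, so is \( \Sigma_{n:\Nb} X^n \), hence \( HX \) is a strict category and we may suppress commutativity witnesses.

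First I would dispatch non-emptiness and upper bounds. Non-emptiness is immediate since \( (0, \iota) : HX \) where \( \iota : \fin{0} \to X \) is the unique such map. For upper bounds, \cref{lem:HX-sifted} already gives that \( HX \) has binary coproducts: concretely, \( (n,x) \) and \( (m,y) \) have coproduct \( (n+m, [x,y]) \) with the two evident inclusions \( \fin{n} \hookrightarrow \fin{n+m} \hookleftarrow \fin{m} \), so this is our upper bound \( (n,x) \to (n+m,[x,y]) \leftarrow (m,y) \). Note these two clauses do not use that \( X \) is a set — they are exactly what made \( HX \) sifted.

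The substantive step is clause (3): coequalizing a parallel pair. Suppose we are given \( f, g : (n,x) \to (m,y) \), i.e.\ two maps \( f, g : \fin{n} \to \fin{m} \) with \( x = y \circ f \) and \( x = y \circ g \) (as maps \( \fin{n} \to X \)). We must produce some \( h : (m,y) \to (m', y') \) with \( h f = h g \) in \( HX \), i.e.\ a map \( h : \fin{m} \to \fin{m'} \) together with \( y' : \fin{m'} \to X \) satisfying \( y = y' \circ h \) and \( h \circ f = h \circ g \) as maps \( \fin{n} \to \fin{m'} \). The natural candidate is to take \( h \) to be the coequalizer of \( f, g \) in \( \hSet \) restricted to standard finite sets — but the coequalizer of \( f,g : \fin{n} \rightrightarrows \fin{m} \) in \( \hSet \) is \( \fin{m}/{\sim} \) where \( \sim \) is the smallest equivalence relation with \( f(i) \sim g(i) \), and the point is that this quotient is again Bishop-finite (it is a quotient of the decidable-equality set \( \fin{m} \) by a decidable equivalence relation, since the generating relation is decidable and finitely generated, so the generated equivalence relation is decidable), hence merely equivalent to some \( \fin{m'} \). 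This last step — that a quotient of \( \fin{m} \) by a finitely-generated relation is again Bishop-finite, which needs decidability of equality on \( \fin{m} \) — is exactly where the hypothesis that \( X \) is a set is indispensable: it makes the hom-types sets so that \( f \) and \( g \) are genuine data and the quotient makes sense without higher coherence. Since being filtered is a proposition, we may choose such an equivalence \( \fin{m'} \simeq \fin{m}/{\sim} \); transporting along it, let \( q : \fin{m} \to \fin{m'} \) be the composite \( \fin{m} \twoheadrightarrow \fin{m}/{\sim} \simeq \fin{m'} \). By construction \( q \circ f = q \circ g \). It remains to equip \( (m', -) \) with a compatible family: we need \( y' : \fin{m'} \to X \) with \( y' \circ q = y \). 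This is where we use \( x = y\circ f = y \circ g \): the map \( y : \fin{m} \to X \) coequalizes \( f, g \) (precisely because \( y \circ f = x = y \circ g \)), so by the universal property of the coequalizer \( \fin{m}/{\sim} \) in \( \hSet \), \( y \) factors uniquely through \( q \), giving the required \( y' \). Then \( h \defeq (q, \text{witness } y = y'\circ q) : (m,y) \to (m',y') \) is a morphism in \( HX \) with \( h f = h g \), as needed.

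The main obstacle I anticipate is the bookkeeping in clause (3) around Bishop-finiteness: one must be careful that the equivalence relation generated on \( \fin{m} \) by \( \{(f(i),g(i))\}_{i:\fin{n}} \) is decidable — this follows because the relation is generated by finitely many pairs between elements of a set with decidable equality, so membership in the generated equivalence relation is itself decidable (e.g.\ by a reachability argument on the finite graph) — and hence its quotient is Bishop-finite, so that it merely is of the form \( \fin{m'} \) and legitimately names an object of \( HX \). Everything else (non-emptiness, upper bounds, the factorisation of \( y \)) is routine once \cref{lem:HX-sifted} and the explicit coequalizers in \( \hSet \) from \cref{pro:set-limits} are in hand.
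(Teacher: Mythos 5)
Your proposal is correct and follows essentially the same route as the paper: the paper simply defers the routine clauses (non-emptiness, upper bounds, and the overall structure of the verification) to the proof of Lemma~4.4 of~\cite{JW78}, and isolates exactly the step you identify as substantive, namely that the coequalizer of \( f,g : \fin{n} \to \fin{m} \) is again of the form \( \fin{m'} \) because the generated equivalence relation on \( \fin{m} \) is decidable and quotients of finite sets by decidable relations are finite. Your write-up just makes explicit what the citation leaves implicit, including the factorisation of \( y \) through the quotient and the role of \( X \) being a set in letting one suppress the commutativity witnesses.
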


\begin{proof}
  The proof of~\cite[Lemma~4.4]{JW78} can be carried out almost word-for-word in our setting.
  The perhaps only non-obvious step requires that the coequalizer of two parallel arrows \( f,g : \fin{n} \to \fin{m} \) itself be finite, i.e.\ of the form \( \fin{l} \).
  This holds because the relation induced by \( f \) and \( g \) on \( \fin{m} \) is decidable, and the quotient of a finite set by a decidable relation is also finite.
\end{proof}

Now we show how to replace diagrams \( X \to \cat{A} \) by diagrams \( HX \to \cat{A} \).

\begin{cst}
  We construct a functor \( G : \cat{A}^X \to \cat{A}^{HX} \) as follows.
  For a family \( A : X \to \cat{A} \), let \( GA(n,x) \defeq \bigoplus_{i : \fin{n}} A(x_i) \).
  For a morphism \( (f, p) : (n, x)  \to (m, y) \) in \( HX \), we have the path \( p : \Pi_{i : \fin{n}} x_i = y_{f(i)} \) which induces a morphism \( A_p : \bigoplus_{i : \fin{n}} A(x_i) \to \bigoplus_{i : \fin{n}} A(y_{f(i)}) \) by transporting and functoriality of biproducts.
  We define the morphism \( GA_{(f, p)} : \bigoplus_{i : \fin{n}} A(x_i) \to \bigoplus_{j : \fin{m}} A(y_j) \) as the composite:
  \[ \textstyle \bigoplus_{i : \fin{n}} A(x_i) \xlongrightarrow{A_p} \bigoplus_{i : \fin{n}} A(y_{f(i)}) \xlongrightarrow{\nabla} \bigoplus_{j : \fin{m}} A(y_j) \]
  where the last map sums over the fibres of \( f \).
  The sum is well-defined since it is finite: any function between finite types has decidable fibres, and a decidable subset of a finite type is finite, hence \( \fib{f}(j) \) is finite for all \( j : \fin{m} \).

  Checking that \( GA \) defines a functor is straightforward.
  Lastly, the obvious functor \( h_X : X \to HX \) defined by \( h_X(x) \defeq (1,x) \) makes the following diagram commute:
  \[ \begin{tikzcd}[row sep=small]
      X \ar[dd, "h_X" swap] \ar[dr, "A", bend left=20] \\
      & \cat{A} \\
      HX \ar[ur, "GA" swap, bend right=20]
    \end{tikzcd} \]
\end{cst}

The following is the analogue of~\cite[Proposition~2.5]{Har82} in our setting.

\begin{lem} \label{lem:H-lex}
  The functor \( G : \cat{A}^X \to \cat{A}^{HX} \) respects limits.
\end{lem}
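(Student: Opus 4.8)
Here $G(\lim{\cat{J}} A^{(-)}) \to \lim{\cat{J}} G(A^{(-)})$ should be read as ``respects limits'' in the usual sense that $G$ sends a limit cone to a limit cone. The plan is to reduce to a pointwise statement and then use that the finite biproducts appearing in $G$ are finite \emph{products}, which commute with limits. Recall that limits in the functor categories \( \cat{A}^X \) and \( \cat{A}^{HX} \) are computed pointwise, where we take the limits in question to exist in \( \cat{A} \) — automatic for finite limits, and for all limits when \( \cat{A} \) is complete. So, given a diagram \( \cat{J} \to \cat{A}^X \) whose limit exists, it suffices to show that for each object \( (n,x) : HX \) the functor
\[ \cat{A}^X \longrightarrow \cat{A}, \qquad A \longmapsto GA(n,x) = \bigoplus_{i : \fin{n}} A(x_i) \]
preserves that limit, and that the induced comparison map agrees with the canonical one.

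First I would observe that this evaluated functor factors as restriction \( x^* : \cat{A}^X \to \cat{A}^{\fin{n}} \) along the map \( x : \fin{n} \to X \), followed by the \(n\)-fold biproduct functor \( \bigoplus : \cat{A}^{\fin{n}} \to \cat{A} \). The functor \( x^* \) is precomposition, so it sends the pointwise limit to the pointwise limit on the nose. For the second functor: since \( \fin{n} \) is a set it is a discrete category, \( \cat{A}^{\fin{n}} \) is the \(n\)-fold power of \( \cat{A} \), and — because in an additive category the finitely iterated biproduct over \( \fin{n} \) is the product over \( \fin{n} \) — the functor \( \bigoplus \) is the product functor \( \prod : \cat{A}^{\fin{n}} \to \cat{A} \). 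The latter is right adjoint to the diagonal \( \Delta : \cat{A} \to \cat{A}^{\fin{n}} \), hence preserves all limits that exist; equivalently, a finite product is itself a limit and limits commute with limits. Composing, \( A \mapsto GA(n,x) \) preserves limits for every \( (n,x) : HX \).

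It then remains to identify the canonical comparison \( G(\lim{\cat{J}} A^{(-)}) \to \lim{\cat{J}} G(A^{(-)}) \), a natural transformation of functors \( HX \to \cat{A} \), with the pointwise comparisons just shown to be isomorphisms. At \( (n,x) \) this comparison is \( \bigoplus_{i:\fin{n}} \lim{\cat{J}} A^{(-)}(x_i) \to \lim{\cat{J}} \bigoplus_{i:\fin{n}} A^{(-)}(x_i) \), and one checks it is the canonical map by unwinding the definition of \( G \) on morphisms: the structure maps of \( GA \) — the transport \( A_p \) along a path \( p \) in \( X \), and the fibrewise sum \( \nabla \) — are built from the biproduct and are natural in \( A \), hence commute with the limit projections. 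Since a pointwise isomorphism of functors into a category is an isomorphism, \( G \) respects limits.

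The single load-bearing point is the observation that the finite biproducts occurring in \( GA \) are finite \emph{products}, so they commute with the (possibly infinite) limit in question; the remainder is the routine bookkeeping of pointwise limits in functor categories and of identifying the comparison map, so I do not expect a real obstacle. Conceptually, \( G \) is the left Kan extension of a family along \( h_X : X \to HX \), and it preserves limits precisely because each comma category \( h_X \downarrow (n,x) \) is finite and discrete, making the defining colimit a finite biproduct.
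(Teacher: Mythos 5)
Your proof is correct and follows essentially the same route as the paper's: both reduce to the pointwise statement via the fact that limits in functor categories are computed pointwise, and both conclude because the finite biproduct \( \bigoplus_{\fin{n}} \) is a finite product and hence preserves limits. Your additional verification that the comparison map is the canonical one (by naturality of \( A_p \) and \( \nabla \) in \( A \)) is a detail the paper leaves implicit.
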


\begin{proof}
  Let \( A : \cat{D} \to \cat{A}^X \) be a diagram whose limit exists.
  For all \( (n, x) : HX \), we have
  \[ \textstyle G(\lim{d:\cat{D}} A_d)(n,x) \jeq \bigoplus_{j   : \fin{n}} \lim{d: \cat{D}} A_d(x_j)
    = \lim{d : \cat{D}} \bigoplus_{j : \fin{n}} A_d(x_j) \jeq \lim{d : \cat{D}} GA_d(n,x) \]
  using that limits in functor categories are computed pointwise, and that \( \bigoplus_{\fin{n}} \) preserves limits.
\end{proof}

Before the next proposition, we require a lemma:

\begin{lem} \label{lem:Ap-path-ind}
  Let \( n : \Nb \), and \( A : X \to \cat{A} \).
  Consider an object \( M : \cat{A} \) along with a family \( \eta : \Pi_{x:X} A(x) \to M \).
  For any path \( p : x = x' \) in \( X^n \), the following diagram commutes:
  \[ \begin{tikzcd}[row sep=small]
      \bigoplus_{i : \fin{n}} A(x_i) \ar[dr, "\bigoplus_{i} \eta_{x_i}", bend left=20] \ar[dd, "A_p" swap] \\
      & M \\
      \bigoplus_{i : \fin{n}} A(x'_i)  \ar[ur, "\bigoplus_{i} \eta_{x'_i}" swap, bend right=20]
    \end{tikzcd} \]
\end{lem}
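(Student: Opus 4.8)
The statement is that for a path $p : x =_{X^n} x'$ in $X^n$, the maps $\bigoplus_i \eta_{x_i}$ and $(\bigoplus_i \eta_{x'_i}) \circ A_p$ agree as morphisms $\bigoplus_{i:\fin{n}} A(x_i) \to M$. My plan is a direct path induction on $p$, which reduces everything to the reflexivity case. The key point is to unfold how $A_p$ is defined in the \textup{\texttt{Construction}} preceding this lemma: $A_p$ is built by transporting along (the components of) $p$ and using functoriality of the biproduct $\bigoplus_{i:\fin{n}}$. So the honest content of the lemma is entirely about compatibility of $\eta$ with transport in the family $A : X \to \cat{A}$.

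\begin{proof}
  By path induction on \( p \), it suffices to treat the case \( p \jeq \refl{x} \), where \( x : X^n \).
  In this case \( A_p \) is the identity morphism on \( \bigoplus_{i : \fin{n}} A(x_i) \): indeed, by the construction of \( A_{(-)} \), the morphism \( A_{\refl{x}} \) is obtained by transporting each component \( A(x_i) \) along \( \refl{x_i} : x_i = x_i \) (which is the identity) and then applying functoriality of the biproduct to the resulting identities, hence is itself the identity.
  Therefore the diagram asserts that \( \bigoplus_{i} \eta_{x_i} = \bigoplus_{i} \eta_{x_i} \), which holds by reflexivity.
\end{proof}

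The only point that requires a little care — the ``main obstacle,'' though it is minor — is making the reduction $A_{\refl{x}} = \id$ precise: one must recall from the \textup{\texttt{Construction}} that $A_p$ is $\bigoplus_{i:\fin{n}}$ applied to the component-wise transports $\transp{p_i}$, and that for $p \jeq \refl{x}$ each $p_i$ is $\refl{x_i}$, so each $\transp{\refl{x_i}}$ is the identity on $A(x_i)$, and functoriality of the biproduct sends the tuple of identities to the identity. Once this is spelled out, the diagram commutes by $\refl{}$. (Alternatively, one could induct on $n$ and on each component path separately, but the single path induction on $p : x =_{X^n} x'$ is cleanest since $X^n$ is just a type.) I expect no genuine difficulty here: the lemma is a bookkeeping step isolating the transport-compatibility of $\eta$, to be used in the proof of the next proposition (presumably that $\colim{X} A$ receives a universal cocone, or that $G$ interacts correctly with the colimit).
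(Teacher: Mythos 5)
Your proof is correct and takes the same approach as the paper, whose entire proof is ``By path induction on \( p \).'' Your additional unpacking of why \( A_{\refl{x}} \) reduces to the identity is accurate but left implicit in the paper.
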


\begin{proof}
  By path induction on \( p \).
\end{proof}

Now we prove that passing between \( A \) and \( GA \) leaves the colimit unchanged (if it exists).

\begin{pro} \label{pro:GA-colimit}
  Let \( A : X \to \cat{A} \).
  Restriction along the functor \( h_X : X \to HX \) is an isomorphism
  \[ h_X^* : \cat{A}^{HX}(GA, \const{HX}(M)) \to \cat{A}^X(A, \const{X}(M)) \]
  natural in \( M : \cat{A} \).
  Consequently, the colimits of \( A \) and \( GA \) coincide, when they exist.
\end{pro}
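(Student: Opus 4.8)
The plan is to exhibit the bijection $h_X^*$ directly by constructing an inverse, using the universal property of the biproducts $GA(n,x) = \bigoplus_{i:\fin n} A(x_i)$ together with \cref{lem:Ap-path-ind} to check well-definedness and naturality.

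First I would unwind what the two hom-sets are. An element of $\cat{A}^X(A, \const X(M))$ is a family $\eta : \Pi_{x:X} \bigl(A(x) \to M\bigr)$ (no coherence, since $X$ is a $1$-type and $M$ is constant — any would-be naturality square is a mere proposition that holds by path induction, exactly as in the proof of \cref{lem:groupoid-indexed-functor}). An element of $\cat{A}^{HX}(GA, \const{HX}(M))$ is a family $\theta : \Pi_{(n,x):HX}\bigl(\bigoplus_{i:\fin n} A(x_i) \to M\bigr)$ together with, for each morphism $(f,p) : (n,x) \to (m,y)$, a witness that $\theta_{(m,y)} \circ GA_{(f,p)} = \theta_{(n,x)}$. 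The map $h_X^*$ sends such a $\theta$ to $\eta_x \defeq \theta_{(1,x)}$, using the identification $\bigoplus_{i:\fin 1} A(x_i) \cong A(x)$.

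Next I would build the candidate inverse. Given $\eta : \Pi_{x:X}(A(x) \to M)$, define $\theta_{(n,x)} \defeq \bigl[\eta_{x_0},\dots,\eta_{x_{n-1}}\bigr] : \bigoplus_{i:\fin n} A(x_i) \to M$, the map out of the biproduct induced by the cotupling of the $\eta_{x_i}$. It is immediate that $\theta_{(1,x)} = \eta_x$, so this is a section of $h_X^*$ on underlying families; conversely, $\theta_{(n,x)}$ is determined by its precomposites with the $n$ biproduct inclusions, which are the $\eta_{x_i}$, so it is also a retraction — giving a bijection on underlying families. The real content is checking that $\theta$ so defined is a natural transformation, i.e.\ respects morphisms $(f,p):(n,x)\to(m,y)$ of $HX$. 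Recall $GA_{(f,p)} = \nabla \circ A_p$, where $A_p : \bigoplus_{i:\fin n} A(x_i) \to \bigoplus_{i:\fin n} A(y_{f(i)})$ is induced by transport along $p : x = y\circ f$, and $\nabla : \bigoplus_{i:\fin n} A(y_{f(i)}) \to \bigoplus_{j:\fin m} A(y_j)$ sums over the fibres of $f$. I would verify $\theta_{(m,y)}\circ \nabla = [\eta_{y_{f(0)}},\dots,\eta_{y_{f(n-1)}}]$ by precomposing both sides with the inclusions $A(x_i)\hookrightarrow\bigoplus_{i:\fin n}A(y_{f(i)})$ — each inclusion lands in the $f(i)$-th summand after $\nabla$, where $\theta_{(m,y)}$ acts as $\eta_{y_{f(i)}}$ — and then $\theta_{(m,y)}\circ GA_{(f,p)} = [\eta_{y_{f(0)}},\dots]\circ A_p = [\eta_{x_0},\dots,\eta_{x_{n-1}}] = \theta_{(n,x)}$, where the middle equality is precisely \cref{lem:Ap-path-ind} (applied componentwise, or rather via the cotupling version of it). Naturality in $M:\cat{A}$ of $h_X^*$ is a straightforward diagram chase.

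The main obstacle is bookkeeping around the summation over fibres in $\nabla$: one must be careful that ``decidable subset of a finite type is finite'' is used consistently so that the relevant biproducts are genuinely finite, and that the universal-property manipulations (cotupling, precomposition with inclusions) are legitimate — but none of this is deep, it is the same finiteness reasoning already invoked in the \textbf{Construction}. Once the bijection is established, the final sentence follows: a colimit of $A$ (resp.\ of $GA$) is exactly an object $M$ with $\cat{A}^X(A,\const X(-)) \simeq \cat{A}(-,M)$ (resp.\ with $\cat{A}^{HX}(GA,\const{HX}(-)) \simeq \cat{A}(-,M)$), naturally; since $h_X^*$ identifies the two representable-to-be functors, one representing object exists iff the other does, and they agree — just as in the last line of \cref{lem:rezk-limit-invariant} and \cref{pro:final-functor-preserves-colimit}.
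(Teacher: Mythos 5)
Your proposal is correct and follows essentially the same route as the paper: the same explicit inverse $e(\eta)_{(n,x)} = \bigoplus_i \eta_{x_i}$, the same split of naturality into the $A_p$ triangle (handled by \cref{lem:Ap-path-ind}) and the $\nabla$ triangle (checked componentwise), and the same appeal to the universal property of the finite biproduct for the round-trip identities. The only point you gloss slightly is the retraction direction: that a general natural transformation $\nu : GA \Ra \const{HX}(M)$ satisfies $\nu_{(n,x)} \circ (\text{$i$-th inclusion}) = \nu_{(1,x_i)}$ is itself an instance of naturality of $\nu$ applied to the $HX$-morphism $(1,x_i) \to (n,x)$ with reflexivity filler, which the paper makes explicit.
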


\begin{proof}
  We construct an explicit inverse \( e \) to \( h_X^* \).
  Let \( M : \cat{A} \), and let \( \eta : A \Ra \const{X}(M) \) be a natural transformation, i.e.\ a family \( \eta : \Pi_{x:X} A(x) \to M \).
  Given such a family \( \eta \), we extend it to a natural transformation \( e(\eta) : GA \Ra \const{HX}(M) \) using the biproduct, as follows.
  For \( (n, x) : HX \), let 
  \[ \textstyle e(\eta)_{(n,x)} \defeq \bigoplus_{i} \eta_{x_i} : \bigoplus_{i : \fin{n}} A(x_i) \to M \]
  Thus we have defined a transformation \( e(\eta) \), and now we check naturality.

  Let \( (f, p) : (n, x) \to (m, y) \) be a morphism in \( HX \).
  Our task is to verify that outer triangle in the following diagram commutes:
  \[ \begin{tikzcd}
  \bigoplus_{i : \fin{n}} A(x_i) \ar[dr, "\bigoplus_i \eta_{x_i}" swap, bend right=20] \rar["A_p"]
  & \bigoplus_{i : \fin{n}} A(y_{f(i)}) \rar["\nabla"] \dar[dashed]
  & \bigoplus_{j : \fin{m}} A(y_j) \ar[dl, "\bigoplus_j \eta_{x_j}", bend left=20]  \\
  & M
  \end{tikzcd} \]
  where the dashed line is \( \bigoplus_{i : \fin{n}} \eta_{y_{f(i)}} \).
  The inner-left triangle commutes by Lemma~\ref{lem:Ap-path-ind}.
  That the inner-right triangle commutes can be immediately checked on each component \( i : \fin{n} \).
  Thus we conclude that \( e(\eta) \) is a natural transformation.

  From the construction it is clear that \( h_X^* \circ e = \id\).
  For the other equality, let \( \nu : GA \Ra \const{HX}(M) \) be a natural transformation.
  Given some \( (n, x) : HX \), then for any \( i : \fin{n} \) we have the morphism in \( HX \) on the left, whose filler is the reflexivity path:
  \[ \begin{tikzcd}[row sep=small]
      \fin{1} \ar[dd, "i"] \ar[dr, "x_i", bend left=20] & && A(x_i) \ar[dr, "\nu_{(1,x_i)}", bend left=20] \ar[dd, "GA_i"] \\
      & X && & M \\
      \fin{n} \ar[ur, "x" swap, bend right=20] & && \bigoplus_{j : \fin{n}} A(x_j) \ar[ur, "\nu_{(n,x)}" swap, bend right=20]
    \end{tikzcd} \]
  The vertical arrow in the right triangle is the inclusion, which is also given by functoriality of \( GA \).
  The right triangle commutes by naturality of \( \nu \).
  By the universal property of the $n$-fold biproduct, we have that \( \nu_{(n,x)} = \bigoplus_{i : \fin{n}} \nu_{(1,x_i)} \).
  This means that \( \nu = e(h_X^*(\nu)) \), and consequently \( \id = e \circ h_X^* \).
\end{proof}

The proposition tells us that the following diagram commutes, whenever \( \cat{A} \) is cocomplete:
\[ \begin{tikzcd}
    & \cat{A}^{HX} \ar[dr, "{\colim{HX}}"] \\
    \cat{A}^X \ar[ur, "G"] \ar[rr, "\colim{X}"]&& \cat{A}
\end{tikzcd} \]
From this we deduce the following results.

\begin{cor} \label{cor:colim-X-preserves-finite-products}
  The functor \( \colim{X} : \Mod{R}^X \to \Mod{R} \) preserves finite products.
\end{cor}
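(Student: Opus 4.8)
The plan is to reduce the statement to the corresponding fact for sifted colimits over $HX$ — namely that sifted colimits of modules preserve finite products — using the commuting triangle established just before the corollary. First I would invoke Proposition~\ref{pro:GA-colimit}, which tells us that $\colim{X} = \colim{HX} \circ G$ as functors $\Mod{R}^X \to \Mod{R}$. So it suffices to show that each of $G : \Mod{R}^X \to \Mod{R}^{HX}$ and $\colim{HX} : \Mod{R}^{HX} \to \Mod{R}$ preserves finite products.

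For $G$, I would appeal directly to Lemma~\ref{lem:H-lex}: $G$ respects limits, and finite products are in particular limits, so $G$ preserves them. For $\colim{HX}$, the key input is Lemma~\ref{lem:HX-sifted}, which says $HX$ has coproducts and is therefore sifted. By the proposition that sifted colimits of sets commute with finite products, together with Corollary~\ref{cor:sifted-preserve-groups} (sifted colimits of groups are computed on underlying sets), the colimit functor over a sifted precategory preserves finite products of abelian groups; and since the forgetful functor $U : \Mod{R} \to \Ab$ reflects limits and colimits, the same holds for $R$-modules. Concretely, for families $A, B : HX \to \Mod{R}$ one has natural isomorphisms
\[ \colim{HX}(A \times B) \simeq \colim{HX} A \times \colim{HX} B \]
computed on underlying sets via siftedness, and the empty-product case follows from $HX$ being non-empty. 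Composing, $\colim{X} = \colim{HX} \circ G$ preserves finite products.

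The only mild subtlety is making sure the abstract ``sifted colimits of sets commute with finite products'' result transfers to $\Mod{R}$ and interacts correctly with the pointwise computation of limits in the functor category $\Mod{R}^X$; but this is exactly what Corollary~\ref{cor:sifted-preserve-groups} and the reflection properties of $U$ are set up to handle, so I expect no real obstacle here — the content is entirely in the already-proven lemmas. One should note that this argument only gives preservation of finite \emph{products}, not finite limits; preservation of monomorphisms (and hence left-exactness) when $X$ is a set is a strictly stronger statement, proved separately using that $HX$ is then filtered and that $\Mod{R}$ satisfies AB5.
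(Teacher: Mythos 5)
Your proposal matches the paper's proof exactly: both factor \( \colim{X} \) as \( \colim{HX} \circ G \) via \cref{pro:GA-colimit}, use \cref{lem:H-lex} for \( G \), and use siftedness of \( HX \) (\cref{lem:HX-sifted}, together with \cref{cor:sifted-preserve-groups} and the forgetful functor to transfer from sets to modules) for \( \colim{HX} \). The paper states this in one line; you have simply filled in the same details.
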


\begin{proof}
  We know \( G \) preserves limits, and \( \colim{HX} \) preserves products since $HX$ is sifted.
\end{proof}

\begin{thm} \label{thm:coproduct-lex}
  Suppose \( \cat{A} \) is an abelian category satisfying AB3 and AB5.
  For any set \( X \), the functor \( \bigoplus_X : \cat{A}^X \to \cat{A} \) is left-exact.
  In particular, \( \cat{A} \) satisfies AB4.
\end{thm}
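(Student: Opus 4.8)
The plan is to obtain left-exactness directly from the commuting triangle displayed just above, which factors $\colim{X}$ as $\colim{HX} \circ G$. Since $\cat{A}$ satisfies AB3 it is cocomplete, so this triangle is available, and since $X$ is a set, $\colim{X}$ is the coproduct functor $\bigoplus_X$ by \cref{dfn:coproducts}. It therefore suffices to show that each of the functors $G : \cat{A}^X \to \cat{A}^{HX}$ and $\colim{HX} : \cat{A}^{HX} \to \cat{A}$ preserves finite limits, since a composite of such functors again preserves finite limits.

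That $G$ preserves all limits, and in particular all finite limits, is exactly \cref{lem:H-lex}. For $\colim{HX}$, I first observe that $HX$ is small: its type of objects is $\Sigma_{n : \Nb} X^n$, which is small because $X$ is. Moreover $HX$ is filtered by \cref{pro:HX-filtered}, using the hypothesis that $X$ is a set. Hence AB5 applies to the small filtered precategory $HX$ and guarantees that $\colim{HX}$ preserves finite limits. Composing, $\bigoplus_X = \colim{HX} \circ G$ is left-exact, which is the first assertion.

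It remains to deduce AB4. For this I would use the standard fact that a left-exact functor between abelian categories preserves monomorphisms: a morphism $f$ is monic precisely when its kernel vanishes, the kernel is a finite limit (the equalizer of $f$ with the zero morphism), and a left-exact functor preserves the zero object, it being the empty product. Thus, given a family of monomorphisms $\eta : \Pi_{x:X} A(x) \to B(x)$, its kernel in $\cat{A}^X$ is the zero family, since kernels in functor categories are computed pointwise; so left-exactness of $\bigoplus_X$ identifies the kernel of $\bigoplus_X \eta$ with $\bigoplus_X$ of the zero family, which is again zero, and hence $\bigoplus_X \eta$ is monic. The substantial content having been established in \cref{lem:H-lex} and \cref{pro:HX-filtered}, I do not expect a real obstacle here; the only points that genuinely need care are confirming that $HX$ is small, so that AB5 is applicable, and checking that the finite limits of $\cat{A}^X$ relevant to AB4 are indeed among the (small) limits preserved by $G$.
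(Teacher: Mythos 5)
Your proposal is correct and follows essentially the same route as the paper: factor $\bigoplus_X = \colim{HX} \circ G$, invoke \cref{lem:H-lex} for $G$ and \cref{pro:HX-filtered} together with AB5 for $\colim{HX}$. The added details (smallness of $HX$, and the deduction of AB4 from left-exactness via kernels) are correct elaborations of points the paper leaves implicit.
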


\begin{proof}
  The assumption that \( \cat{A} \) satisfies AB5 means that the functor \( \colim{HX} \) is exact, because \( HX \) is filtered when \( X \) is a set by Proposition~\ref{pro:HX-filtered}.
  Since \( G \) respects limits, we conclude from the diagram above that \( \colim{X} \) (i.e.\ \( \bigoplus_X \)) is left-exact.
\end{proof}

\section{Semantics}
\label{sec:semantics}
We interpret the most central results from the previous sections into an \( \infty \)-topos \( \XX \), as made possible by recent developments on the semantics of Homotopy Type Theory~\cite{KL21, LS20, Shu19, Boe20}.
Specifically, we work out the interpretation of categories of modules (\cref{thm:R-mod-cat-represents}) and colimits of modules indexed by a type (\cref{thm:semantic-coprod-lex}).

Thus far we have studied categories of abelian groups and modules, as well as abstract abelian categories in HoTT.
Semantically, these yield structures in our chosen \( \infty \)-topos \( \XX \).
For example, we will see that the ``internal category'' \( \iAb \) obtained by interpretation represents---in the sense of \cref{dfn:represents}---the presheaf
\[ X \longmapsto \Ab(\XX \kslice X) : \XX^\op \longrightarrow \Cat \]
which sends an object \( X \in \XX \) to the ordinary category of (relatively \(\kappa\)-compact) abelian groups over \( X \).

Before setting off our assumptions need some care.
Any Grothendieck \( \infty \)-topos \( \cat{X} \) can be presented by a type-theoretic model topos \( \cat{M} \) according to~\cite{Shu19}.
Assuming an inaccessible cardinal \( \kappa \), the latter admits a univalent universe \( \Typep^\kappa \to \Type^\kappa \) for relatively \( \kappa \)-presentable fibrations~\cite[Definition~4.7]{Shu19} supporting the interpretation of HoTT.\footnote{Modulo certain classes of higher inductive types, which we do not use.}
Moreover, Stenzel~\cite{Ste19} proves that the universe presents a classifying object~\cite[Section~6.1.6]{HTT} for relatively \( \kappa \)-compact morphisms\footnote{The difference in terminology ($\kappa$-presentable vs.\ $\kappa$-compact) is unfortunate. As we work in the \( \infty \)-setting, we will employ Lurie's terminology, i.e.\ ``\(\kappa\)-compact''~\cite[Definition~6.1.6.4]{HTT}, when necessary.} in \( \XX \).

We will require a small fragment of the theory of complete Segal objects~\cite{Ras18} in \( \XX \) (also called {\em internal \(\infty \)-categories}~\cite{Mar21} or {\em Rezk objects}~\cite{RV22}).
As our model of the (large) \( \infty \)-category \( \ooCat \) of \( \infty \)-categories, we choose the \( \infty \)-category of complete Segal spaces.
Though our arguments will clearly be model-independent, certain specific constructions require a choice, and this is a convenient one for our purposes.

\begin{ntn*}
  We will write \( \XXk \) for the sub-\(\infty\)-category of \( \kappa \)-compact objects in \( \XX \), and for an object \( X \in \XX \) we form the slice \( (\XX \kslice X) \) of relatively \( \kappa \)-compact morphisms into \( X \).
  The \(1\)-topos of \(0\)-truncated objects in \( \XX \) is \( \sets{\XX} \).
  The functor \( \core{-} : \ooCat \to \spaces \) picks out the \( \infty \)-groupoid core of an \( \infty \)-category, and \( \spaces \) is the \( \infty \)-category of spaces (also called \(\infty\)-groupoids).
  For complete Segal spaces \( \core{-} \) simply picks out the zeroth space.
The universal map in \( \XX \) presented by Shulman's univalent universe will be written \( \Typep \to \Type \), leaving \( \kappa \) implicit.
No confusion will arise as no other universes will be around.
Notions in \( \XX \) resulting from interpretation will be denoted in typewriter font.
For example we will be considering the universe \( \iSet \) classifying $\kappa$-compact $0$-truncated objects.
In particular, we leave the \( \kappa \) implicit in the notation of the universe of sets (or abelian groups, or $R$-modules).
\end{ntn*}

\subsection{Rezk \texorpdfstring{\((1,1)\)}{(1,1)}-objects}
The first goal of this section is to repackage the internal categories in \( \XX \) obtained by interpretation into structures which conveniently represent presheaves of \(1\)-categories.
We begin by explaining how \(1\)-categories can be associated to an \( \infty \)-category such as \( \XX \).
An ordinary category \( \Cc \) is incarnated as a simplicial space through its {\bf classifying diagram} \( \cd(\Cc) \)~\cite[Section~3.5]{Rez01}:
\begin{equation} \label{eqn:classifying-diagram}
  \begin{tikzcd}
\cd(\Cc) :=  \bigg ( \cdots \rar["\vdots", "\vdots" swap] & \core{\Cc^{[2]}} \rar[shift left=.6em] \rar \rar[shift right=.6em]
    & \lar[shift left=.3em] \lar[shift right=.3em] \core{\Cc^{[1]}} \rar[shift left=.3em] \rar[shift right=.3em]
    & \lar \Cc^\simeq \bigg )
  \end{tikzcd}
\end{equation}
where we used \( \core{-} \) to denote the Kan complex obtained from the groupoid core of a $1$-category, and \( [n] \) denotes the usual poset with \( n+1 \) elements.
This classifying diagram is a complete Segal space, and there is a Quillen adjunction 
\( \fc : \ooCat \adj \Cat : \cd \) which exhibits \( \Cat \) as precisely the $1$-truncated complete Segal spaces~\cite[Theorem~5.11]{CL20}.
The left adjoint \( \fc \) is the fundamental category functor.
By identifying \( \Cat \) with its image under the embedding \( \cd \), we may speak about presheaves of $1$-categories on $\XX$.
On the \(1\)-categorical level, \( \fc \) factors through the category \( \sSet_\rD \) of \(2\)-restricted simplicial spaces.
We therefore expect the corresponding \(\infty\)-functor to factor through \( \spaces_\rD \), though we do not provide a proof.

The following are the structures into which we will repackage internal categories.

\begin{dfn} \label{dfn:rezk-object}
  A {\bf Segal \( (1,1) \)-object in \( \XX \)} is a \(2\)-restricted simplicial object \( \Cc : \rD \to \XX \) satisfying the three following conditions:
  \begin{enumerate}[wide=2em, leftmargin=*]
  \item[(truncation)] the {\bf structure map} \( (\dom, \cod) : \Cc_1 \to \Cc_0 \times \Cc_0 \) is $0$-truncated in \( \XX \);
  \item[(Segal condition)] the natural map \( \Cc_2 \to \Cc_1 \times_{\Cc_0} \Cc_1 \) is an equivalence;
  \item[(associativity)] the following two composites agree:
    \[ \begin{tikzcd}
        \Cc_1 \times_{\Cc_0} \Cc_1 \times_{\Cc_0} \Cc_1 \ar[rr, shift left=.3em, "\id \times \circ"] \ar[rr, shift right=.3em, "\circ \times \id" swap]
        && \Cc_1 \times_{\Cc_0} \Cc_1 \rar["\circ"] & \Cc_1
      \end{tikzcd} \]
    where \( \circ : \Cc_1 \times_{\Cc_0} \Cc_1 \xra{\sim} \Cc_2 \xra{\delta^2_1} \Cc_1 \).
  \end{enumerate}
  If moreover the square below below is a pullback, then \( \Cc \) is a {\bf Rezk \((1,1)\)-object}:
  \begin{equation} \label{eqn:rezk-condition}
    \begin{tikzcd}
    \Cc_0 \rar["{(\id, \id, \id)}"] \dar["\Delta"] & \Cc_1 \times_{\Cc_0} \Cc_1 \times_{\Cc_0} \Cc_1 \dar["{(f,g,h) \mapsto (fg, gh)}"] \\
    \Cc_0 \times \Cc_0 \ar[r, "{\id \times \id}"] & \Cc_1 \times \Cc_1
  \end{tikzcd}
\end{equation}
The Segal (or Rezk) \((1,1)\)-object \( \Cc \) is {\bf locally small} if the structure map is relatively \(\kappa\)-compact.
\end{dfn}

It is straightforward to interpret Definition~9.1.1 from~\cite{hottbook} to get the data of a precategory in \( \XX \).
We allow the underlying type of a precategory to be any object of \( \XX \), not necessarily classified by \( \Type \).
Our next lemma states how this data can be repackaged into a Segal \((1,1)\)-object.

\begin{lem} \label{lem:precategory-segal}
  Precategories in \( \XX \) correspond to locally small Segal \((1,1)\)-objects, and categories to locally small Rezk \((1,1)\)-objects.
\end{lem}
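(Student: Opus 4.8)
The plan is to construct an explicit equivalence between the (large) groupoid of precategories in $\XX$ (in the sense of the interpreted Definition~9.1.1 of~\cite{hottbook}) and the groupoid of locally small Segal $(1,1)$-objects, and to check that under this equivalence the univalence condition on a precategory corresponds exactly to the Rezk (pullback) condition~\eqref{eqn:rezk-condition}. In one direction, given a precategory $\Cc$ in $\XX$ with object-object $\Cc_0$ and a relatively $\kappa$-compact hom-map $\Cc_1 \to \Cc_0 \times \Cc_0$, I would build the $2$-restricted simplicial object by setting $\Cc_2 \defeq \Cc_1 \times_{\Cc_0} \Cc_1$ (composable pairs), taking the identity-assigning map $\Cc_0 \to \Cc_1$ as degeneracy and the source/target/composition maps (which are part of the interpreted precategory data) as the face maps $\delta^2_0, \delta^2_2, \delta^2_1$. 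The Segal condition then holds by construction, and the associativity square is precisely the interpreted associativity axiom of the precategory. The truncation condition is exactly the requirement in Definition~9.1.1 that each hom-type be a set, interpreted as $0$-truncatedness of the structure map; local smallness is the relative $\kappa$-compactness we assumed.

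Conversely, from a locally small Segal $(1,1)$-object $\Cc : \rD \to \XX$ I would read off $\Cc_0$ as the object of objects, the structure map $\Cc_1 \to \Cc_0\times\Cc_0$ as the hom-object fibration, the degeneracy $\Cc_0 \to \Cc_1$ as identities, and the composite $\circ : \Cc_1\times_{\Cc_0}\Cc_1 \xra{\sim} \Cc_2 \xra{\delta^2_1} \Cc_1$ as composition. The unit laws would come from the simplicial identities relating $\delta^2_1$, the degeneracies $\sigma^1_0, \sigma^1_1$, and $\sigma^0_0$; associativity is the displayed associativity condition. I would then check these two assignments are mutually inverse as functors of $(\infty$-)groupoids — the round-trip on a Segal $(1,1)$-object recovers $\Cc_2$ up to the Segal equivalence and the higher data is $2$-truncated away, while the round-trip on a precategory is the identity on all the structure. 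This bookkeeping is routine but slightly fiddly because one must be careful that the interpretation of Definition~9.1.1 produces exactly the simplicial identities needed, and that no data beyond level $2$ is required (the higher coherences of a precategory are automatic since hom-objects are $0$-truncated).

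For the second half of the statement, I would unwind the univalence axiom for a precategory in $\XX$: it says the map $\Cc_0 \to (\text{object of internal isomorphisms})$ sending an object to its identity iso is an equivalence, equivalently that the ``object of isomorphisms'' $\mathrm{Iso}(\Cc) \to \Cc_0\times\Cc_0$ is identified, via $\mathtt{idtoiso}$, with the path-object $\Cc_0^{\Delta^1} \to \Cc_0\times\Cc_0$. The key observation is that the internal object of isomorphisms in a Segal $(1,1)$-object can be presented as the object fitting in the pullback along $(f,g,h)\mapsto(fg,gh)$ of $\id\times\id : \Cc_0\times\Cc_0 \to \Cc_1\times\Cc_1$ — i.e.\ a morphism together with a two-sided inverse, which for a $1$-category is the same as an isomorphism — so that the pullback in~\eqref{eqn:rezk-condition} asserts precisely that $\Cc_0 \to \mathrm{Iso}(\Cc)$ (the top map, sending an object to its identity with itself as inverse) is an equivalence. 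Hence univalence $\Leftrightarrow$ the Rezk condition, and the equivalence of groupoids restricts to the asserted correspondence between categories and locally small Rezk $(1,1)$-objects.

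The main obstacle I expect is the second paragraph's identification: making precise that the ``object of isomorphisms'' extracted from the interpreted precategory structure (built from $\mathtt{idtoiso}$ and the internal notion of invertible morphism) agrees, as an object over $\Cc_0\times\Cc_0$, with the apex of the cospan $(f,g,h)\mapsto(fg,gh)$ against $\id\times\id$ appearing in~\eqref{eqn:rezk-condition}. One must check that a ``morphism with a specified two-sided inverse'' is an internal proposition-many-worth-of-data over the morphism (so the two formulations of $\mathrm{Iso}$ really coincide, not merely have equivalent total spaces), which uses that composition is associative and unital — exactly the Segal $(1,1)$ axioms. Everything else is a matter of carefully transporting the interpreted type-theoretic axioms through the dictionary ``$\Sigma$-type $\leftrightarrow$ total object, $\Pi$-over-a-set $\leftrightarrow$ fibrewise structure, $0$-truncated $\leftrightarrow$ $0$-truncated map,'' which is standard given the semantics references already cited.
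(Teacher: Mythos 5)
Your proposal is correct and follows essentially the same route as the paper: build the $2$-restricted simplicial object from the interpreted precategory data, with the structure map as the total space of the hom-family, and match univalence with the pullback condition~\eqref{eqn:rezk-condition}. The only differences are cosmetic --- the paper takes $\Cc_2$ to be the object of commuting triangles and proves the Segal map is an equivalence via $(f,g)\mapsto(f,g,gf,\refl{gf})$, where you take the strict pullback $\Cc_1\times_{\Cc_0}\Cc_1$ so the Segal condition is automatic; and your unwinding of univalence as biinvertibility (together with the observation that a two-sided inverse is property, not structure) supplies detail for a step the paper merely asserts.
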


\begin{proof}
  Given a precategory \( \iC \), we define a \(2\)-restricted simplicial object \( \iC_\bullet \) as follows.
  Let \( \iC_0 \defeq \iC \), and write \( (\dom, \cod) : \iC_1 \to \iC_0 \times \iC_0 \) for the total space of the hom \( \iC(-,-) : \iC \times \iC \to \iSet \) with its projection.
  The identity maps \( \id : \Pi_{c : \iC} \iC(c,c) \) give a section \( \iC_0 \to \iC_1 \) of both \( \dom \) and \( \cod \).
  Now let \( \iC_2 \defeq \Sigma_{a, b, c : \iC_0} \Sigma_{f : \iC(a,b)} \Sigma_{g : \iC(b,c)} \Sigma_{h : \iC(a,c)} gf = h \) be the object of commuting triangles in \( \iC \).
  Then \( \iC_\bullet \) is a \(2\)-restricted simplicial object in \( \XX \) with face maps given by projections, and degeneracies induced by \( \id \).
  Clearly \( \iC_\bullet \) satisfies the truncation condition, and is associative.
  The map \( (f, g) \mapsto (f,g,gf, \refl{gf}) \) is easily shown to be an inverse to the natural map \( \iC_2 \to \iC_1 \times_{\iC_0} \iC_1 \) in HoTT, thus we conclude that \( \iC_\bullet \) is a Segal \((1,1)\)-object.
  It is locally small by construction.

  It is similarly straightforward to produce a precategory from a locally small Segal \((1,1)\)-object.
  Under this correspondence, univalence of a precategory is equivalent to the square (\ref{eqn:rezk-condition}) being a pullback, so we conclude that categories correspond to Rezk \((1,1)\)-objects.
\end{proof}

Now we explain in what sense Rezk \((1,1)\)-objects represent presheaves of ordinary categories. 
By our discussion of the functor \( \fc \) on the previous page, it is clear that to recover the fundamental category of a classifying diagram it suffices to recover the lower three simplicial levels.
This parallels the fact that categories in HoTT only yield {\em \(2\)-restricted} simplicial objects (as opposed to ``unrestricted'' ones) and leads us to the following notion of representability.

\begin{dfn} \label{dfn:represents}
  Let \( \Cc : \XX^\op \to \Cat \) be a presheaf of \( 1 \)-categories on \( \XX \).
  A Rezk \((1,1)\)-object \( \iC : \rD \to \XX \) {\bf represents} \( \Cc \) if there is a specified natural equivalence \( \eta : \XX(-,\iC_\bullet) \simeq i^*_2 \Cc \) of functors \( \XX^\op \to \spaces_\rD \), where \( i^*_2 \) is the restriction along the inclusion \( \rD \to \Delta \).
\end{dfn}

We will use this notion of representability when working out the semantics of the category of sets and categories of modules in the next sections.
The reader who is mainly interested in those representability results (e.g.\ \cref{thm:R-mod-cat-represents}) may skip ahead to the next section.
The remaining parts of this section are only needed for \cref{thm:semantic-coprod-lex}.

Any statement about (pre)categories in HoTT yields a statement about locally small (Segal) Rezk \((1,1)\)-objects by translating across the correspondence of \cref{lem:precategory-segal}.
For example, one can check that products of (pre)categories correspond to levelwise products of (Segal) Rezk \((1,1)\)-objects.
Our next statement is that functor precategories interpret to the internal hom of Segal \((1,1)\)-objects.

If \( F, G : \cat{C} \to \cat{D} \) are two functors between (pre)categories in HoTT, then we can represent natural transformations \( F \Ra G \) as functors \( \eta : \cat{C} \times [1] \to \cat{D} \) such that \( \eta \lvert_{\cat{C} \times \{0\}} = F \) and \( \eta \lvert_{\cat{C} \times \{1 \}} = G \).
The precategory \( [n] \) interprets to the Segal \((1,1)\)-object \( \rd^n \) which is the \(2\)-restriction of the obvious Segal object \( \Delta^n \).
We have the following:

\begin{lem}
  Let \( \Cc \) and \( \Dc \) be locally small Segal \((1,1)\)-objects in \( \XX \).
  \begin{enumerate}
  \item the object of functors \( \iFun(\Cc, \Dc) \) obtained by interpretation represents the presheaf
    \[ X \longmapsto (\XX / X)_\rD(X \times \Cc, X \times \Dc) : \XX^\op \longrightarrow \spaces \]
    where the base change functor \( X \times (-) \) is applied levelwise;
  \item the Segal \((1,1)\)-object \( \iFun(\Cc, \Dc)_\bullet \) obtained by interpreting the functor category is equivalent to
\[ \begin{tikzcd}
    \iFun(\Cc \times \rd^2, \Dc) \rar[shift left=.6em] \rar[shift right=.6em] \rar & 
    \lar[shift left=.3em] \lar[shift right=.3em] \iFun(\Cc \times \rd^1, \Dc) \rar[shift left=.3em] \rar[shift right=.3em] & 
    \lar \iFun(\Cc \times \rd^0, \Dc) 
  \end{tikzcd} \]
where the degeneracy and face maps are induced by the \( \rd^n \)'s.
If \( \Dc \) is Rezk, then so is \( \iFun(\Cc, \Dc)_\bullet \).
  \end{enumerate}
\end{lem}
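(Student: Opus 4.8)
The plan is to establish both statements by translating the known HoTT facts about functor precategories across the correspondence of \cref{lem:precategory-segal}, combined with the standard description of natural transformations as functors out of \( \cat{C} \times [1] \). First I would set up part (1): by \cref{lem:precategory-segal} the interpreted object \( \iFun(\Cc, \Dc) \) is itself a locally small Segal \((1,1)\)-object, so by \cref{dfn:represents} (or rather its Segal analogue — the statement here only asks for representability at the level of spaces, i.e. for the zeroth level \( \iFun(\Cc,\Dc)_0 = \iFun(\Cc,\Dc)^\simeq \)) it suffices to identify the space \( \XX(X, \iFun(\Cc,\Dc)) \) naturally in \( X \). The key input is the internal universal property of the functor precategory in HoTT: for precategories \( \cat{C}, \cat{D} \) and any object \( X \) (thought of as a discrete precategory), functors \( X \to \cat{C}^{\cat{D}} \) correspond to functors \( X \times \cat{D} \to \cat{C} \). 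Interpreting this statement into \( \XX \) and using that products of precategories become levelwise products of Segal \((1,1)\)-objects, together with the fact that maps from a representable \( X \) into an interpreted type compute the appropriate mapping space in the slice \( \XX / X \) (base change \( X \times (-) \)), yields exactly the claimed presheaf \( X \mapsto (\XX/X)_\rD(X \times \Cc, X \times \Dc) \). Naturality in \( X \) is automatic from functoriality of interpretation and base change.

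For part (2), I would unwind the simplicial levels of \( \iFun(\Cc,\Dc)_\bullet \) one at a time. Level \( 0 \) is the object of functors, which by the \( \rd^0 \simeq \ast \) identification is \( \iFun(\Cc \times \rd^0, \Dc) \). Level \( 1 \) is the object of natural transformations; here I invoke the HoTT fact (stated in the paragraph preceding the lemma) that a natural transformation \( F \Ra G \) is precisely a functor \( \cat{C} \times [1] \to \cat{D} \) restricting correctly on the endpoints — interpreting this and using that \( [1] \) interprets to \( \rd^1 \) gives \( \iFun(\Cc,\Dc)_1 \simeq \iFun(\Cc \times \rd^1, \Dc) \). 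Level \( 2 \) is the object of composable pairs of natural transformations; by the Segal condition for \( \iFun(\Cc,\Dc)_\bullet \) this is \( \iFun(\Cc,\Dc)_1 \times_{\iFun(\Cc,\Dc)_0} \iFun(\Cc,\Dc)_1 \), and one checks this agrees with \( \iFun(\Cc \times \rd^2, \Dc) \) using the (interpreted) pushout/Segal decomposition \( \rd^2 \simeq \rd^1 \sqcup_{\rd^0} \rd^1 \) of the \(2\)-restricted interval together with the fact that \( \iFun(-, \Dc) \) sends (the relevant) colimits of precategories to limits. The face and degeneracy maps are then induced functorially by the coface/codegeneracy maps among the \( \rd^n \)'s, matching those of the displayed simplicial object by construction. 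Finally, for the Rezk claim: if \( \Dc \) is Rezk, then \( \iFun(\Cc, \Dc) \) is a category by the interpretation of the HoTT theorem that \( \cat{C}^{\cat{D}} \) is univalent when \( \cat{C} \) is, so by \cref{lem:precategory-segal} its associated Segal \((1,1)\)-object is Rezk.

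The main obstacle I expect is the verification at simplicial level \( 2 \): identifying the object of composable pairs of natural transformations with \( \iFun(\Cc \times \rd^2, \Dc) \). This requires knowing that the \(2\)-restricted interval \( \rd^2 \) decomposes appropriately (as the \(2\)-restriction of \( \Delta^2 \), with the Segal-type gluing \( \rd^2 \simeq \rd^1 \cup_{\rd^0} \rd^1 \) holding after interpretation) and that \( \iFun(-, \Dc) \) converts this into the fibre product over \( \iFun(\Cc \times \rd^0, \Dc) \) — i.e. that the internal-hom precategory construction is suitably continuous in its first variable. In HoTT this is the statement that \( (\cat{C} \sqcup_{\cat{C}_0} \cat{C}')^{\cat{D}} \simeq \cat{C}^{\cat{D}} \times_{\cat{C}_0^{\cat{D}}} \cat{C}'^{\cat{D}} \) for the relevant pushout of precategories, which one can check directly; everything else (levels \(0\) and \(1\), naturality, the Rezk upgrade) reduces cleanly to already-interpreted HoTT facts and the correspondence of \cref{lem:precategory-segal}. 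I would present the level-by-level check as the bulk of the proof and relegate the compatibility of face/degeneracy maps to a remark that it holds by construction.
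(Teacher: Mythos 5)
Your argument for part (1) and for the zeroth and first simplicial levels in part (2) matches the paper's: interpretation plus base-change stability for (1), and the representation of natural transformations as functors out of \( \Cc \times \rd^1 \) for level one. Where you genuinely diverge is at level two. The paper identifies \( \iFun(\Cc \times \rd^2, \Dc) \) with \( \iFun(\rd^2, \iFun(\Cc,\Dc)_\bullet) \) by the exponential adjunction and observes that the latter is the object of commuting triangles in \( \iFun(\Cc,\Dc) \), which is literally the second level of the Segal \((1,1)\)-object produced by \cref{lem:precategory-segal}; no Segal condition or spine decomposition is needed, and compatibility with all three face maps (including composition) is immediate because the composite is part of the data of a commuting triangle. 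Your route instead passes through the Segal pullback \( \iFun(\Cc,\Dc)_1 \times_{\iFun(\Cc,\Dc)_0} \iFun(\Cc,\Dc)_1 \) and a gluing \( \rd^2 \simeq \rd^1 \sqcup_{\rd^0} \rd^1 \).

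Two cautions about that route. First, the displayed gluing is false as a pushout of \(2\)-restricted simplicial objects: the spine of \( \Delta^2 \) is not \( \Delta^2 \). What is true is either the precategory-level statement that \( [2] \) is the pushout \( [1] \sqcup_{[0]} [1] \) of precategories (free composition supplies the missing arrow), or the weaker statement that mapping out of the spine into a Segal object agrees with mapping out of \( \rd^2 \); you should say which you mean. Second, and more importantly, once you identify level two with the Segal pullback on both sides, compatibility with the outer faces \( \delta^2_0, \delta^2_2 \) and with the degeneracies is indeed automatic, but compatibility with the composition face \( \delta^2_1 \) is not ``by construction'' and needs its own verification (compare the explicit \( \delta^2_1 \) checks in \cref{pro:cat-set-represents} and \cref{thm:R-mod-cat-represents}). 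This is a fillable gap, but it cannot be relegated to a remark; the paper's direct identification with the object of commuting triangles is precisely what lets it avoid this step.
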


\begin{proof}
  It is straightforward to see that functors between precategories in HoTT interpret to simplicial maps between the corresponding Segal \((1,1)\)-objects.
  Then (1) follows by stability of interpretation across base change.

  By representing natural transformations as functors, we see that \( \iFun(\Cc \times \rd^1, \Dc) \) is the total space of the map \( \iFun(\Cc, \Dc)^2 \to \iSet \) which sends two functors to the set of natural transformations between them.
  Thus we get the first and second levels of (2).
  Finally, the third level is naturally equivalent to \( \iFun(\rd^2, \iFun(\Cc, \Dc)_\bullet) \) in HoTT, and the latter is clearly equivalent to the space of commuting triangles in \( \iFun(\Cc, \Dc) \).
  These equivalences clearly assemble to a simplicial map, so we are done.
 \end{proof}

 We note that by combining part (1) and (2) of the lemma, we get a formula for the presheaf represented by \( \iFun(\Cc, \Dc)_\bullet \).

When working with Segal and Rezk \((1,1)\)-objects we may use category-theoretical language as long as the relevant interpretation has been worked out, or is clear from the context.
We also note that we can take \( \XX \) to be the \(\infty\)-topos \( \spaces \) of spaces, and in this case we will use the terminology {\bf Segal} and {\bf Rezk \((1,1)\)-spaces} for emphasis.

Our next proposition asserts that functor categories interpret to the internal hom in \( \XX_\rD \).
In order to prove this, we require a lemma.
Recall the terminal geometric morphism \( \XX(1, -) : \XX \adj \spaces : \ell \).
Applying this adjunction levelwise, we get an induced geometric morphism \( \XX(1,-) : \XX_\rD \adj \spaces_\rD : \ell_* \), and left-exactness of \( \ell_* \) implies that it preserves Segal and Rezk \((1,1)\)-objects.
Note that \( \ell_*(\rd^n) = \rd^n \) where we leave the ambient \(\infty\)-topos implicit.

\begin{lem} \label{lem:internal-simplicial-hom}
  Let \( \Cc, \Dc \in \XX_\rD \).
  For \( X \in \XX \) and \( n \in \{ 0, 1, 2 \} \), we have:
    \[ \begin{tikzcd}
      \XX(X, \Dc^\Cc)_n \simeq (\XX / X)_\rD(X \times \Cc \times \Delta^n_{\leq 2}, X \times \Dc)
    \end{tikzcd} \]
\end{lem}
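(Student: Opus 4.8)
The plan is to rewrite both sides as mapping spaces in the presheaf $\infty$-category $\XX_\rD=\on{Fun}(\rDop,\XX)$ and then chain together three adjunctions: the left adjoint to evaluation at level $n$, the defining adjunction of the internal hom $\Dc^\Cc$, and the base-change adjunction over $X$. Throughout I write $\underline{X}$ for the constant $2$-restricted object on $X\in\XX$, and use that $\XX_\rD$ is cartesian closed, being functors into the cartesian closed $\XX$.

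First I would record that for $n\in\{0,1,2\}$ the object $[n]$ lies in $\rD$, so $\Delta^n_{\leq 2}$ is the representable presheaf $\rD(-,[n])$, viewed in $\XX_\rD$ via the embedding of discrete objects $\on{Set}\hookrightarrow\XX$. Evaluation at level $n$, $\on{ev}_n\colon\XX_\rD\to\XX$, has a left adjoint sending $A$ to $\underline{A}\times\Delta^n_{\leq 2}$ (this is left Kan extension along $\{[n]\}\hookrightarrow\rDop$, computed using that $\on{Hom}_\rD(-,[n])=\Delta^n_{\leq 2}$); hence $\XX(X,F_n)\simeq\XX_\rD(X\times\Delta^n_{\leq 2},F)$ naturally in $F$ and $X$. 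Here the point worth parsing carefully is that $F_n$, and in particular $(\Dc^\Cc)_n$, is an object of $\XX$ rather than a space, which is exactly why $X\times\Delta^n_{\leq 2}$ — and not bare evaluation — is the corepresenting object. Taking $F=\Dc^\Cc$ and then applying the universal property $\XX_\rD(\mathcal E,\Dc^\Cc)\simeq\XX_\rD(\mathcal E\times\Cc,\Dc)$ with $\mathcal E=X\times\Delta^n_{\leq 2}$ rewrites the left-hand side as $\XX_\rD(X\times\Cc\times\Delta^n_{\leq 2},\Dc)$.

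For the final step I would use the identification $(\XX/X)_\rD\simeq(\XX_\rD)_{/\underline{X}}$ (slices commute with functor categories, with $\underline{X}$ the constant object) together with the fact that, in any $\infty$-category with finite products, the forgetful functor $(\XX_\rD)_{/\underline{X}}\to\XX_\rD$ is left adjoint to $\underline{X}\times(-)$, which on $\XX_\rD$ is exactly the levelwise product $X\times(-)$. This yields $\XX_\rD(X\times\Cc\times\Delta^n_{\leq 2},\Dc)\simeq(\XX/X)_\rD(X\times\Cc\times\Delta^n_{\leq 2},X\times\Dc)$, where on the right both objects are regarded over $X$ via the projection. Composing the three equivalences gives the displayed formula, and since each step is natural in $X$ and functorial in $[n]\in\rD$, the equivalences assemble into an equivalence of $2$-restricted simplicial spaces, which is what the following proposition needs.

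I do not expect a real difficulty here: the content is a concatenation of formal adjunctions. The step requiring the most care is bookkeeping — parsing $\XX(X,\Dc^\Cc)_n$ correctly (levels of an internal hom are $\XX$-objects, so one must pass through the left adjoint to evaluation rather than naive evaluation), pinning down $(\XX/X)_\rD\simeq(\XX_\rD)_{/\underline{X}}$, and checking that $X\times\Delta^n_{\leq 2}$ really is the corepresenting object. Ensuring all three equivalences are simultaneously natural in $X$ and in $[n]$, so that they glue to a simplicial equivalence, is the part most likely to need an explicit argument rather than a citation.
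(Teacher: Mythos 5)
Your proof is correct and is essentially the paper's argument: the same three formal ingredients (corepresentability of evaluation at level $n$ by $X\times\Delta^n_{\leq 2}$, cartesian closedness of $\XX_\rD$, and the base-change/slice adjunction) appear, merely reordered. The paper packages the first step by reducing to $X=1$ via stability of the internal hom under base change and then invoking the adjunction $\ell_*\dashv\XX(1,-)$ together with the Yoneda lemma in $\spaces_\rD$, which is precisely your left-adjoint-to-evaluation step in the case $X=1$.
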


\begin{proof}
  By stability of the internal hom across base change, we can assume \( X = 1 \).
  We then have:
    \[ \XX_\rD (\Cc \times \rd^n, \Dc) \simeq \XX_\rD(\rd^n, \Dc^\Cc) \simeq \spaces_\rD(\rd^n, \XX(1,\Dc^\Cc)) \simeq \XX(1,\Dc^\Cc)_n \]
    where the first equivalence is by cartesian-closedness of \( \XX_\rD \), the second equivalence comes from the adjunction \( \ell_* \dashv \XX(1,-) \), and the last equivalence is the Yoneda lemma.
\end{proof}

\begin{pro}
  Let \( \Cc \) and \( \Dc \) be locally small Segal \((1,1)\)-objects in \( \XX \).
  The \(2\)-restricted simplicial objects \( \Dc^{\Cc} \) and  \( \iFun(\Cc, \Dc)_\bullet \) in \( \XX \) are equivalent.
\end{pro}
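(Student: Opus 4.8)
The statement compares two ways of producing a $2$-restricted simplicial object: the internal hom $\Dc^{\Cc}$ in $\XX_{\rD}$, and the Segal $(1,1)$-object $\iFun(\Cc,\Dc)_\bullet$ obtained by interpreting the functor precategory into $\XX$. Both are objects of $\XX_{\rD}$, so to show they are equivalent it suffices to exhibit a natural equivalence of the presheaves (on $\XX^{\op}$, valued in $\spaces_{\rD}$) that they represent, and then invoke the Yoneda lemma for $\XX_{\rD}$. The previous lemma already computes the presheaf represented by $\iFun(\Cc,\Dc)_\bullet$: by part~(2), its $n$-th level is $\iFun(\Cc \times \rd^n, \Dc)$, and by part~(1) each such object represents $X \mapsto (\XX/X)_{\rD}(X \times \Cc \times \rd^n,\ X \times \Dc)$. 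On the other side, \cref{lem:internal-simplicial-hom} computes $\XX(X,\Dc^{\Cc})_n \simeq (\XX/X)_{\rD}(X \times \Cc \times \rd^n,\ X \times \Dc)$. So the two presheaves agree levelwise.

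\emph{Key steps, in order.} First I would record that both $\Dc^{\Cc}$ and $\iFun(\Cc,\Dc)_\bullet$ are genuine objects of $\XX_{\rD}$ — the former by cartesian-closedness of $\XX_{\rD}$, the latter by construction in the previous lemma — so that it is legitimate to compare them via the (fully faithful) Yoneda embedding $\XX_{\rD} \hookrightarrow \mathrm{Fun}(\XX_{\rD}^{\op}, \spaces)$. Second, I would reduce the comparison to a comparison of the functors $\XX^{\op} \to \spaces_{\rD}$ sending $X$ to $\XX(X, -)_\bullet$ applied to each side; this is allowed because the objects $X \times \Delta^k_{\leq 2}$ for $X \in \XX$ and $k \in \{0,1,2\}$ generate $\XX_{\rD}$ under colimits (every $2$-restricted simplicial object is a colimit of representables $\Delta^k_{\leq 2}$ tensored with objects of $\XX$), so a map in $\XX_{\rD}$ is an equivalence iff it induces equivalences on all mapping spaces out of such generators — equivalently, iff the induced map of $\spaces_{\rD}$-valued presheaves $\XX(-, \cdot)_\bullet$ is an equivalence. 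Third, I would apply \cref{lem:internal-simplicial-hom} to identify $\XX(X, \Dc^{\Cc})_n$ with $(\XX/X)_{\rD}(X \times \Cc \times \Delta^n_{\leq 2},\ X \times \Dc)$ for $n \in \{0,1,2\}$. Fourth, I would apply parts~(1) and~(2) of the previous lemma to identify $\XX(X, \iFun(\Cc,\Dc)_\bullet)_n$ with the same object. Fifth — and this is where one must be slightly careful — I would check that these levelwise equivalences are compatible with the face and degeneracy maps $\delta^i, \sigma^i$ (i.e. assemble into an equivalence in $\spaces_{\rD}$, naturally in $X$): on the $\Dc^{\Cc}$ side the simplicial structure maps are induced by the coface/codegeneracy maps $\rd^{n} \to \rd^{m}$ under $\XX(X,\Dc^{\Cc})_\bullet \simeq \XX_{\rD}(\rd^\bullet, \Dc^{\Cc})$, and on the $\iFun$ side they are induced by the same maps of the $\rd^n$'s (as stated in part~(2) of the lemma); so both identifications are, by construction, natural in the simplicial variable. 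Finally I would conclude by Yoneda that $\Dc^{\Cc} \simeq \iFun(\Cc,\Dc)_\bullet$ in $\XX_{\rD}$.

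\emph{The main obstacle.} The genuinely delicate point is the compatibility of the levelwise equivalences with the simplicial structure maps — i.e. that the square of identifications commutes coherently, not just objectwise. This is where the precise bookkeeping from the previous lemma (that the face/degeneracy maps of $\iFun(\Cc,\Dc)_\bullet$ are "induced by the $\rd^n$'s") is essential: it is exactly what matches the structure maps of $\Dc^{\Cc}$, which come from the cosimplicial object $\rd^\bullet$ via the internal-hom adjunction. Since the paper has worked out that both descriptions use the same cosimplicial diagram $\rd^\bullet$, this compatibility holds essentially by construction, and I would phrase the proof so as to make that transparent rather than re-deriving the simplicial structure maps from scratch. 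A secondary subtlety is the "local smallness" hypothesis: one should note that $\Dc^{\Cc}$ is again locally small (its structure map is relatively $\kappa$-compact because $\kappa$-compact objects are closed under the relevant finite limits and the exponential by a locally small Segal object), so that the comparison takes place among objects of the same kind; I would dispatch this in a sentence.

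\begin{proof}
  Both $\Dc^{\Cc}$ and $\iFun(\Cc,\Dc)_\bullet$ are objects of $\XX_{\rD}$: the former by cartesian-closedness of $\XX_{\rD}$, the latter by the previous lemma. Since the Yoneda embedding $\XX_{\rD} \hookrightarrow \mathrm{Fun}(\XX_{\rD}^{\op}, \spaces)$ is fully faithful, and since every object of $\XX_{\rD}$ is a colimit of objects of the form $X \times \rd^n$ for $X \in \XX$ and $n \in \{0,1,2\}$, it suffices to produce a natural equivalence
  \[ \XX(X, \Dc^{\Cc})_\bullet \simeq \XX(X, \iFun(\Cc,\Dc)_\bullet)_\bullet \]
  in $\spaces_{\rD}$, natural in $X \in \XX^{\op}$.

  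For $n \in \{0,1,2\}$, \cref{lem:internal-simplicial-hom} gives a natural equivalence
  \[ \XX(X, \Dc^{\Cc})_n \simeq (\XX / X)_\rD(X \times \Cc \times \rd^n, X \times \Dc). \]
  On the other hand, part (2) of the previous lemma identifies $\XX(X, \iFun(\Cc,\Dc)_\bullet)_n$ with $\XX(X, \iFun(\Cc \times \rd^n, \Dc))$, and part (1) of that lemma identifies the latter with $(\XX / X)_\rD(X \times \Cc \times \rd^n, X \times \Dc)$. Composing, we obtain for each $n \in \{0,1,2\}$ a natural equivalence between the $n$-th levels of the two $2$-restricted simplicial spaces.

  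It remains to check that these levelwise equivalences respect the face and degeneracy maps. On the $\Dc^{\Cc}$ side, under the identification $\XX(X, \Dc^{\Cc})_\bullet \simeq \XX_\rD(\rd^\bullet, \Dc^{\Cc}) \simeq \XX_\rD(\Cc \times \rd^\bullet, \Dc)$ the structure maps are induced by the coface and codegeneracy maps of the cosimplicial object $\rd^\bullet$. By part (2) of the previous lemma, the structure maps of $\iFun(\Cc,\Dc)_\bullet$ are induced by the very same maps of the $\rd^n$'s. Hence both families of identifications are natural in the simplicial variable, and they assemble into the desired equivalence in $\spaces_\rD$, natural in $X$.

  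Finally, $\Dc^{\Cc}$ is again locally small, since relatively $\kappa$-compact morphisms are stable under the finite limits and the exponential entering its structure map. By the Yoneda lemma in $\XX_\rD$ we conclude that $\Dc^{\Cc} \simeq \iFun(\Cc,\Dc)_\bullet$.
\end{proof}
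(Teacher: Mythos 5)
Your proof takes essentially the same route as the paper's: reduce via the Yoneda lemma to comparing the represented presheaves \( \XX^\op \to \spaces_\rD \), then combine \cref{lem:internal-simplicial-hom} with the preceding lemma on \( \iFun(\Cc,\Dc) \) to identify the levels. The paper dispatches this in two sentences, so your extra care about compatibility with the face and degeneracy maps is a correct elaboration of what the paper leaves implicit rather than a different argument.
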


\begin{proof}
By the Yoneda lemma, it suffices to show that the functors \( \XX(-, \Dc^\Cc) \) and \( \XX(-, \iFun(\Cc, \Dc))_\bullet \) of the form \( \XX^\op \to \spaces_\rD \) are equivalent.
But this is immediate by combining the previous two lemmas.
\end{proof}

It follows that the internal hom between Segal \((1,1)\)-objects is itself a Segal \((1,1)\)-object, and even Rezk if the codomain is.

  If \( \Cc \) is a Rezk \((1,1)\)-object in \( \XX \), then the internal limit of a functor \( F : \Dc \to \Cc \) in \( \XX_\rD \) defines a global point \( \ilim{\Dc} F \in \XX(1,\Cc_0) \), if the internal limit exists.
  Of course, so does the limit of an {\em external} functor \( G : D \to \XX(1,\Cc) \) in \( \spaces_{\rD} \).
  We now explain how such external functors \( D \to \XX(1,\Cc) \) can be {\em internalised} to functors in \(\XX_\rD\), and we prove that this procedure does not change the limit or colimit.

  \begin{dfn} \label{dfn:internalisation}
    Let \( \Cc \) be a Rezk \((1,1)\)-object in \( \XX \), and \( D \) a Rezk \((1,1)\)-space.
    Given a functor \( A : D \to \XX(1,\Cc) \), its {\bf internalisation} is the transpose \( \int{A} : \ell_*(D) \to \Cc \) across the adjunction \( \ell_* \dashv \XX(1,-) \).
  \end{dfn}

  To show that internalisation does not change the (co)limit, we require a lemma.
  The reader may find it interesting to compare it with \cite[Example~2.39]{Joh77}.

  \begin{lem}
    Let \( \Cc \) be a Rezk \((1,1)\)-object in \( \XX \), and \( D \) a Rezk \((1,1)\)-space.
    The Rezk \((1,1)\)-spaces \( \XX(1, \Cc^{\ell_*(D)}) \) and \( \XX(1,\Cc)^D \) are equivalent.
  \end{lem}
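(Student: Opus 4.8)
The plan is to reduce everything to a formal computation via the Yoneda lemma in \( \spaces_\rD \): it suffices to exhibit, naturally in \( K \in \spaces_\rD \), an equivalence between the spaces of maps out of \( K \) into the two sides. First I would splice together the adjunction \( \ell_* \dashv \XX(1,-) \) and cartesian-closedness of the functor \( \infty \)-categories \( \XX_\rD \) and \( \spaces_\rD \) (both cartesian closed, since products in them are computed levelwise and \( \XX \), \( \spaces \) are cartesian closed) to obtain
\begin{align*}
  \spaces_\rD\big(K, \XX(1, \Cc^{\ell_*(D)})\big)
  &\simeq \XX_\rD\big(\ell_*(K), \Cc^{\ell_*(D)}\big) \\
  &\simeq \XX_\rD\big(\ell_*(K) \times \ell_*(D), \Cc\big) \\
  &\simeq \XX_\rD\big(\ell_*(K \times D), \Cc\big) \\
  &\simeq \spaces_\rD\big(K \times D, \XX(1,\Cc)\big) \\
  &\simeq \spaces_\rD\big(K, \XX(1,\Cc)^D\big),
\end{align*}
where the outer equivalences are the adjunction, the second and fifth are cartesian-closedness, and the middle one uses that \( \ell_* \), being the levelwise inverse image of the terminal geometric morphism, is left-exact and in particular preserves binary products. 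Each step is natural in \( K \), so Yoneda yields the claimed equivalence \( \XX(1,\Cc^{\ell_*(D)}) \simeq \XX(1,\Cc)^D \) in \( \spaces_\rD \).

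It then remains to confirm that both objects are genuinely Rezk \( (1,1) \)-spaces, so that this is an equivalence of such. Since \( D \) is Rezk (hence Segal) and \( \ell_* \) preserves Segal and Rezk \( (1,1) \)-objects by left-exactness, \( \ell_*(D) \) is a Segal \( (1,1) \)-object in \( \XX \); the internal hom \( \Cc^{\ell_*(D)} \) into the Rezk \( (1,1) \)-object \( \Cc \) is therefore Rezk by the earlier identification of internal homs of Segal \( (1,1) \)-objects, which are Rezk whenever the codomain is. Finally \( \XX(1,-) \) is a right adjoint, hence left-exact, hence preserves the pullback- and equivalence-conditions cutting out Rezk \( (1,1) \)-objects; so \( \XX(1,\Cc^{\ell_*(D)}) \) is Rezk, and the same reasoning makes \( \XX(1,\Cc) \) and then \( \XX(1,\Cc)^D \) Rezk.

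The computation is entirely formal; the one point requiring care is the bookkeeping around existence and size of the exponentials \( \Cc^{\ell_*(D)} \) in \( \XX_\rD \) and \( \XX(1,\Cc)^D \) in \( \spaces_\rD \), which is where local smallness of \( \Cc \) and the standing assumptions on the universe in \( \XX \) enter. I do not anticipate any substantive obstacle beyond making that bookkeeping precise.
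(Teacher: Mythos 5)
Your proposal is correct and follows essentially the same route as the paper: both arguments reduce to the adjunction \( \ell_* \dashv \XX(1,-) \), left-exactness of \( \ell_* \) (to commute it past products), and cartesian-closedness of the \( \rd \)-diagram categories. The only difference is that the paper tests against the representables \( \rd^n \) for \( n \in \{0,1,2\} \) via its internal-hom lemma and then separately checks that the levelwise equivalences assemble into a simplicial map, whereas your Yoneda argument over all \( K \) builds that naturality in automatically.
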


  \begin{proof}
    Using \cref{lem:internal-simplicial-hom} and the adjunction \( \ell_* \dashv \XX(1,-) \), for \( n \in \{0,1,2\} \) we have:
    \[ \XX(1, \Cc^{\ell_*(D)})_n \simeq \XX_\rD(\ell_*(D) \times \rd^n, \Cc) \simeq \spaces_\rD (D \times \rd^n, \XX(1,\Cc)) \simeq (\XX(1,\Cc)^D)_n \]
    where the second equivalence uses that \( \ell_* \) preserves products (being left exact), then transposes across the adjunction.
    The third equivalence is \cref{lem:internal-simplicial-hom} applied to Rezk \((1,1)\)-spaces.
    Using basic properties of adjunctions, one can check that these equivalences assemble to a simplicial map.
  \end{proof}

  The category of sets in HoTT interprets to a Rezk \((1,1)\)-object \( \iSet_\bullet \) which features in the next proposition, and is the main topic of study in the next section.
  For the following proof, we only use that \( \XX(1,\iSet_\bullet) \) has a terminal object and therefore a global sections functor \( \Gamma : \XX(1, \iSet_\bullet) \to \Set \).
  Observe that if \( \Cc \) is a Rezk \((1,1)\)-object in \( \XX \), then the Rezk \((1,1)\)-space \( \XX(1,\Cc) \) is ``enriched'' over \( \XX(1, \iSet_\bullet) \).
  A study of this ``enrichment'' is beyond the scope of this work, and our convention will be to implicitly apply \( \Gamma \) so that the hom \( \XX(1,\Cc)(-,-) \) lands in \( \Set \). 

  \begin{pro} \label{pro:internalisation}
    Let \( \Cc \) be a locally small Rezk \((1,1)\)-object in \( \XX \), and let \( A : D \to \XX(1,\Cc) \) be a functor between Rezk \((1,1)\)-spaces.
    If the internal limit \( \ilim{\ell_*(D)} \int{A} \) in \( \Cc \) exists, so does the limit of \(A\) and we have a canonical isomorphism \( \ilim{\ell_*(D)} \int{A} \simeq \lim{D} A \) in \( \XX(1,\Cc) \).
  \end{pro}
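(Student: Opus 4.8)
The plan is to transport the internal universal property across the right adjoint \( \XX(1,-) : \XX \to \spaces \) of the terminal geometric morphism, which in particular preserves all limits. Write \( L \defeq \ilim{\ell_*(D)} \int{A} \). By interpreting \cref{dfn:limit-hott}, the hypothesis that this internal limit exists says precisely that \( L \) is a global point of \( \Cc_0 \) equipped with an internal natural isomorphism
\[ \Cc(-, L) \;\simeq\; \Cc^{\ell_*(D)}(\const{\ell_*(D)}(-), \int{A}) \]
of internal presheaves on \( \Cc \) valued in \( \iSet \). I would first apply \( \XX(1,-) \) objectwise. Since the hom-object \( \Cc(c,c') \) of global objects \( c, c' \), and likewise each hom-object and each simplicial level of the internal functor category \( \Cc^{\ell_*(D)} \), is cut out by finite limits (fibres and fibre products over \( \Cc_0 \)) that the right adjoint \( \XX(1,-) \) preserves, this produces for every object \( c \) of the Rezk \((1,1)\)-space \( \XX(1,\Cc) \) a bijection, natural in \( c \),
\[ \XX(1,\Cc)(c, L) \;\simeq\; \XX(1, \Cc^{\ell_*(D)})(\const{\ell_*(D)}(c), \int{A}). \]

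The second step identifies the right-hand side with a set of natural transformations. The preceding lemma supplies an equivalence \( \XX(1, \Cc^{\ell_*(D)}) \simeq \XX(1,\Cc)^D \) of Rezk \((1,1)\)-spaces; as it is built from the same adjunction \( \ell_* \dashv \XX(1,-) \) that defines internalisation in \cref{dfn:internalisation}, a short diagram chase with the triangle identities shows that it carries the object \( \int{A} \) to \( A \) and the functor induced by the internal diagonal \( \const{\ell_*(D)} \) to the external diagonal \( \const{D} : \XX(1,\Cc) \to \XX(1,\Cc)^D \). Hence the right-hand side above is \( \XX(1,\Cc)^D(\const{D}(c), A) \), and combining the two steps yields a natural isomorphism \( \XX(1,\Cc)(-, L) \simeq \XX(1,\Cc)^D(\const{D}(-), A) \), which exhibits \( L \) as a limit of \( A \) in \( \XX(1,\Cc) \); in particular that limit exists. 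Finally, since \( \XX(1,-) \) preserves the pullback \eqref{eqn:rezk-condition} (and the remaining finite-limit conditions defining a Rezk \((1,1)\)-object), \( \XX(1,\Cc) \) is itself a Rezk \((1,1)\)-space, so its limits are unique up to canonical isomorphism when they exist; this identifies \( \lim{D} A \) with \( L \) as required.

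The main obstacle is the compatibility bookkeeping in the second step: one must verify that \( \XX(1,-) \) carries the internal hom-object, the internal functor category \( \Cc^{\ell_*(D)} \), and the internal diagonal to their external counterparts, and that \( \int{(-)} \) is inverse to passage to global points in the appropriate sense. Each of these reduces to the preceding lemma together with the observation that the right adjoint \( \XX(1,-) \) commutes with the fibres and fibre products over \( \Cc_0 \) used to define hom-objects and the levels of a functor category. Once these identifications are assembled, the argument is just the standard fact that a limit-preserving functor carries a representing object to a representing object.
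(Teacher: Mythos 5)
Your proposal is correct and follows essentially the same route as the paper: apply the limit-preserving right adjoint \( \XX(1,-) \) to the internal representability isomorphism (landing in \(\Set\) via global sections of \( \iSet_\bullet \)), then use the preceding lemma \( \XX(1, \Cc^{\ell_*(D)}) \simeq \XX(1,\Cc)^D \), which carries \( \int{A} \) to \( A \) and the internal constant functor to the external one, to identify the right-hand side with \( \XX(1,\Cc)^D(\const{D}(-), A) \). The extra remarks about \( \XX(1,\Cc) \) being a Rezk \((1,1)\)-space are harmless but not needed, since uniqueness of the limit already follows from representability.
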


  \begin{proof}
    Suppose the limit of \( \int{A} \) in \( \Cc \) exists, giving an equivalence of functors \( \Cc^\op \to \iSet_\bullet \)
    \[ \Cc(-, \ilim{\ell_*(D)} \int{A}) \: \simeq \: \Cc^{\ell_*(D)}(\iconst{\ell_*(D)}(-), \int{A}) \]
    Applying \( \XX(1, -) \), we get an equivalence between certain functors \( \XX(1,\Cc)^\op \to \XX(1,\iSet_\bullet) \), and by further post-composing with the global sections map \( \Gamma : \XX(1,\iSet_\bullet) \to \Set \), we get an equivalence
    \begin{equation} \label{eqn:internalisation-1}
      \XX(1,\Cc)(-, \ilim{\ell_*(D)} \int{A}) \simeq \XX(1, \Cc^{\ell_*(D)})(\iconst{\ell_*(D)}(-), \int{A})
    \end{equation}
    between functors \( \XX(1,\Cc)^\op \to \Set \).
    We have an equivalence \( \XX(1, \Cc^{\ell_*(D)}) \simeq \XX(1,\Cc)^D \) by the previous lemma, which sends \( \iconst{\ell_*(D)} \) to \( \const{D} \) and \( \int{A} \) to \( A \).
    On hom-spaces, this means we have:
    \begin{equation} \label{eqn:internalisation-2}
      \XX(1, \Cc^{\ell_*(D)})(\iconst{\ell_*(D)}(-), \int{A}) \simeq \XX(1,\Cc)^D(\const{D}(-), A)
    \end{equation}
    Combining the equivalences (\ref{eqn:internalisation-1}) and (\ref{eqn:internalisation-2}), we see that \( \ilim{\ell_*(D)} \int{A} \) is the limit of \( A \), as desired.
  \end{proof}

 The proposition and its proof dualises to colimits, but we will only need it for limits.

\subsection{The universe of sets}
We show that the Rezk \((1,1)\)-object \( \iSet_\bullet \) produced by interpretation represents the presheaf \( \sets{\XX \kslice (-)} : \XX^\op \to \Cat \) in the sense of \cref{dfn:represents}.
First we show a lemma that proves useful for these kinds of representability results.

Recall that the universe \( \Type \) is an {\em object classifier}~\cite[Section~6.1.6]{HTT} and therefore represents (in the usual sense) the presheaf of spaces
\( \core{\XX \kslice (-)} : \XX^\op \to \spaces \).
We will be interested in types which classify certain structures in \( \XX \).
For example, given a ring \( R \in \sets{\XXk} \), we will see that there is a map \( \Rmodstr : \iSet \to \Type \) which classifies \(R\)-modules in \( \XXk \), meaning that the mapping space \( \XX(X, \Sigma_{A:\iSet} \Rmodstr(A)) \) is the groupoid of \(R\)-modules in \( (\XX \kslice X) \) (\cref{thm:R-mod-cat-represents}).
The following lemma gives a description of these mapping spaces for general type families.

\begin{lem} \label{lem:classify}
  Let \( P : Z \to \Type \) be a type family in \( \XX \), and \( X \in \XX \).
  The outer square in the following diagram is a pullback:
  \[ \begin{tikzcd}
      \XX(X,\Sigma_{z:Z} P(z)) \dar \rar & \XX(X,\Typep) \dar \rar["\sim"] & \coree{(\XX \kslice X)_*} \dar \rar & \spaces_* \dar \\
      \XX(X,Z) \rar["{\XX(X,P)}"] & \XX(X,\Type) \rar["\sim"] & \coree{(\XX \kslice X)} \rar["{\coree{\Gamma}}"] & \spaces
  \end{tikzcd} \]
where the functor \( \coree{\Gamma} \) is the restriction of the global points functor \( \Gamma : (\XX \kslice X) \to \spaces \) to the core, and \( (\XX \kslice X)_* \) is the \(\infty\)-category of pointed (relatively \(\kappa\)-compact) objects over \( X \).
\end{lem}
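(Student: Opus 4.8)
The plan is to exhibit the outer rectangle as the horizontal pasting of the three squares already drawn, so that by the pasting lemma for pullbacks it suffices to show that each of the three is cartesian.

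First the left square. By the way the univalent universe \( \Typep \to \Type \) is set up — it presents the classifying object for relatively \( \kappa \)-compact morphisms — the object \( \Sigma_{z:Z} P(z) \) is the pullback of \( \Typep \to \Type \) along \( P : Z \to \Type \) in \( \XX \). Since \( \XX(X,-) : \XX \to \spaces \) preserves limits, the left square is cartesian. For the middle square, the horizontal maps are the equivalences expressing that \( \Type \) and \( \Typep \) are object classifiers: a map \( X \to \Type \) corresponds to the relatively \( \kappa \)-compact morphism into \( X \) that it classifies, and a map \( X \to \Typep \) to the corresponding pointed object over \( X \) (this is the cited work of Shulman and Stenzel). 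These equivalences are compatible with the forgetful maps \( \Typep \to \Type \) and \( (\XX\kslice X)_* \to (\XX\kslice X) \) by construction, so the middle square commutes; and a commuting square both of whose horizontal arrows are equivalences is automatically a pullback.

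The right square is where the content lies. I would invoke that \( \spaces_* \to \spaces \) is the object classifier of \( \spaces \): for any functor \( F : K \to \spaces \) out of an \( \infty \)-groupoid, the pullback \( K \times_{\spaces} \spaces_* \) is the total space \( \Sigma_{k:K} F(k) \), i.e.\ the \( \infty \)-groupoid of points of the fibres of \( F \). Applying this to \( \core{\Gamma} : \core{(\XX\kslice X)} \to \spaces \), whose value on an object \( Y \to X \) is its space of global sections, a point of the fibre over \( Y \to X \) is precisely such a section; hence the total space is the core \( \core{(\XX\kslice X)_*} \) of the pointed slice, and the right square is cartesian. Pasting the three squares then yields the outer rectangle. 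I expect the main obstacle to be purely bookkeeping: verifying that the object-classifier equivalences intertwine the various projection and global-sections functors so that the middle and right squares genuinely commute with the intended maps; once that compatibility is in place each square is cartesian by a universal property, with no further argument. Size considerations — that \( \Gamma \) of a relatively \( \kappa \)-compact morphism is a small space, so that the object classifier \( \spaces_* \to \spaces \) applies — are taken care of by local smallness of \( \XX \).
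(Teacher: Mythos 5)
Your proof is correct and follows essentially the same route as the paper: the left square is cartesian because \( \XX(X,-) \) preserves limits, the middle square because \( \Typep \to \Type \) classifies pointed objects, the right square is a pullback (the paper calls it ``manifest''; your object-classifier argument for \( \spaces_* \to \spaces \) is a valid way to see it), and the conclusion follows by pullback pasting.
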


\begin{proof}
  The right square is manifestly a pullback, and so is the left square since \( \XX(X,-) \) preserves limits.
  Since \( \Typep \) classifies pointed objects, we get the middle square.
  By pullback pasting we conclude that the outer square is a pullback.
\end{proof}

Recall that \( \iSet \) is defined as the total space of the map \( \ishset : \Type \to \Type \) sending a type \( A \) to the proposition in \( \XX \) which holds (or has a global point) if and only if \( A \) is a \(0\)-truncated object.
The universe \( \iSet \) of sets classifies \(0\)-truncated objects:

\begin{lem} \label{lem:universe-set-represents}
  The object \( \iSet \) represents the presheaf of spaces \( \coree{\sets{\XX \kslice (-)}} \).
\end{lem}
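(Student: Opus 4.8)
The plan is to apply \cref{lem:classify} with \( Z \defeq \Type \) and \( P \defeq \ishset \). By construction \( \Sigma_{z : \Type} \ishset(z) = \iSet \), so the outer pullback square of \cref{lem:classify} presents \( \XX(X, \iSet) \) as the pullback of
\[ \coree{(\XX \kslice X)} \longrightarrow \spaces \longleftarrow \spaces_* \]
where the left-hand map is \( \coree{\Gamma} \) precomposed with the self-map of \( \coree{(\XX \kslice X)} \simeq \XX(X,\Type) \) induced by postcomposition with \( \ishset \). It then remains to identify this pullback with \( \coree{\sets{\XX \kslice X}} \), naturally in \( X \).

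First I would unwind the left-hand map. If \( a : X \to \Type \) classifies an object \( W \to X \) of \( \XX \kslice X \), then, since \( \ishset \) is a proposition-valued predicate, the composite \( \ishset \circ a : X \to \Type \) classifies a subobject \( U_W \hookrightarrow X \), namely the largest subobject over which \( W \) becomes fibrewise \( 0 \)-truncated; in particular \( U_W \to X \) is an equivalence precisely when \( W \to X \) is a \( 0 \)-truncated morphism. Applying \( \coree{\Gamma} \) sends \( U_W \to X \) to its space of sections over \( X \); because \( U_W \hookrightarrow X \) is a monomorphism this section space is itself \((-1)\)-truncated, and it is inhabited if and only if \( U_W \to X \) is an equivalence, i.e.\ if and only if \( W \to X \) is \( 0 \)-truncated.

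Now I would compute the pullback. The forgetful map \( \spaces_* \to \spaces \) is the universal left fibration, whose fibre over a space \( S \) is \( S \) itself, so the fibre of \( \coree{(\XX \kslice X)} \times_{\spaces} \spaces_* \) over an object \( W \) is exactly the section space identified above, which is \((-1)\)-truncated. Therefore the pullback is the full subgroupoid of \( \coree{(\XX \kslice X)} \) spanned by those \( W \) for which ``\( W \to X \) is \( 0 \)-truncated'' holds, that is, by the \( 0 \)-truncated objects of \( \XX \kslice X \). By definition this subgroupoid is \( \coree{\sets{\XX \kslice X}} \). Naturality in \( X \) is immediate: every map occurring in \cref{lem:classify} is natural, and the identification of the pullback with the \( 0 \)-truncated objects is compatible with base change since \( 0 \)-truncatedness of a morphism is stable under pullback. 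This yields the required natural equivalence \( \XX(-, \iSet) \simeq \coree{\sets{\XX \kslice (-)}} \).

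The only genuinely delicate point is the semantic unwinding in the second step: verifying that the interpretation of the predicate \( \ishset \) produces the subobject \( U_W \) described above, and that applying the (core of the) global points functor to it detects \( 0 \)-truncatedness of \( W \to X \). Once this is in place, the rest is formal pasting and fibre-computation of pullbacks.
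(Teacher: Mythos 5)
Your proof is correct and follows essentially the same route as the paper: both hinge on applying \cref{lem:classify} to the family \( \ishset : \Type \to \Type \) together with the fact that the internal predicate \( \ishset \) detects external \(0\)-truncatedness. The only difference is bookkeeping --- the paper treats \( X = 1 \) and then reduces general \( X \) to it via pullback-stability of the universe in the slice \( \XX / X \), whereas you compute the pullback of \cref{lem:classify} fibrewise over a general \( X \) directly; both rest on the same semantic input, which you correctly flag as the delicate point.
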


\begin{proof}
  By applying the previous lemma to the type family \( \ishset : \Type \to \Type \), we see that \( \XX(1,\iSet) \) is the sub-\(\infty\)-groupoid of \( \XX(1,\Type) \) on those objects \( A : \Type \) for which \( \ishset(A) \) holds.
  Since \( \ishset(A) \) holds if and only if \( A \) is \(0\)-truncated, \( \XX(1,\iSet) \) is equivalent to the groupoid of \(0\)-truncated objects in \( \XXk \).

  For general \( X \), we always have that families \( X \to \iSet \) correspond to families \( X \to X \times \iSet \) over \( X \).
  Since pullback-stability of the universe implies that \( X \times \iSet \) is a universe of sets in \( \XX / X \), we reduce to the case \( X = 1 \) just treated by pulling back over \( X \).
\end{proof}

Applying~\cite[Theorem~4.4]{Ras21} to the the universal map \( p : \Typep \to \Type \) yields a complete Segal object \( N(p) \) in \( \XX \) which represents (in the usual sense) the presheaf
\( (\XX \kslice (-)) : \XX^\op \to \ooCat \).
The \(2\)-restriction of \( N(p) \) is equivalent to the following \(2\)-restricted simplicial object \( \Type_\bullet \) in \( \XX \):
\[ \begin{tikzcd}
    \sum_{X, Y, Z : \Type} Y^X \times Z^Y \rar[shift left=.6em] \rar \rar[shift right=.6em]
    & \lar[shift left=.3em] \lar[shift right=.3em] \sum_{X, Y : \Type} Y^X \rar[shift right=.3em] \rar[shift left=.3em]
    & \lar \Type
\end{tikzcd} \]
To be explicit, we know that the function types modelled by the universe interpret to the internal hom in \( \XX \), so the first simplicial level is simply the type-theoretic notation for Rasekh's description of \( N(p)_1 \), and the second level is given by the Segal condition.

For the Rezk \((1,1)\)-object \( \iSet_\bullet \), the object of morphisms is simply given by internal homs in \( \XX \):
\[ \iSet_1 := \Sigma_{X,Y : \iSet} (X \to Y) \to \iSet \times \iSet  \]

The following provides the semantics of the category of sets in HoTT.

\begin{pro} \label{pro:cat-set-represents}
  The Rezk \((1,1)\)-object \( \iSet_\bullet \) represents the presheaf \( \sets{\XX \kslice (-)} : \XX^\op \to \Cat \) in the sense of \cref{dfn:represents}.
\end{pro}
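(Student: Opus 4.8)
The plan is to establish the required natural equivalence
\(\eta : \XX(-,\iSet_\bullet) \simeq i^*_2 \sets{\XX\kslice(-)}\)
of functors \(\XX^\op \to \spaces_\rD\) by checking it level by level for \(n \in \{0,1,2\}\), since a \(2\)-restricted simplicial object is determined by these three levels and the face and degeneracy maps between them. The level-zero component is already furnished by \cref{lem:universe-set-represents}: we have \(\XX(-,\iSet_0) = \XX(-,\iSet) \simeq \coree{\sets{\XX\kslice(-)}}\), which is exactly the object of objects of the classifying diagram \(\cd(\sets{\XX\kslice(-)})\), i.e.\ \((i_2^*\cd(\sets{\XX\kslice(-)}))_0\). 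So the first thing I would do is recall this and fix it as the base case.

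Next I would handle the level-one component. By definition \(\iSet_1 \defeq \Sigma_{X,Y:\iSet}(X \to Y)\), and the type-theoretic function type interprets to the internal hom in \(\XX\), so a global point of \(\XX(X,\iSet_1)\) over \(X\) is a pair of \(0\)-truncated objects of \(\XX\kslice X\) together with a morphism between them — i.e.\ exactly an arrow in the category \(\sets{\XX\kslice X}\). More formally, I would apply \cref{lem:classify} to the family \(Z \defeq \Sigma_{X,Y:\iSet}\Type^{(X\to Y\text{-data})}\)… rather, more cleanly: \(\iSet_1\) is the total space of a type family over \(\iSet \times \iSet\) (namely \((X,Y)\mapsto (X\to Y)\), a set since \(\iSet\) is \(0\)-truncated), so \cref{lem:classify} identifies \(\XX(X,\iSet_1)\) as the space of triples consisting of two objects of \(\sets{\XX\kslice X}\) and a point of the mapping space between them, which is \(\core{(\sets{\XX\kslice X})^{[1]}}\), the level-one space of the classifying diagram. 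For level two, \(\iSet_2\) is by the Segal condition (and the interpretation given for \(\Type_\bullet\)) the object \(\Sigma_{X,Y,Z:\iSet}(X\to Y)\times(Y\to Z)\) of composable pairs; equivalently — using that \(\iSet_\bullet\) is already known to be a Rezk \((1,1)\)-object by \cref{lem:precategory-segal} — it is the object of commuting triangles, which matches \(\core{(\sets{\XX\kslice X})^{[2]}}\). The key external input here is that the object classifier property of \(\Type\), together with \cref{lem:classify} and pullback-stability of the universe (so that \(X \times \iSet\) is a universe of sets in \(\XX\kslice X\)), reduces every level to the case \(X = 1\), where it is the plain statement that \(\XX(1,\iSet_n)\) is the \(n\)-th space of the classifying diagram of \(\sets{\XXk}\).

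Having matched the three levels naturally in \(X\), the remaining task is to check that these equivalences are compatible with the face and degeneracy maps, i.e.\ that they assemble into a map of \(2\)-restricted simplicial spaces rather than three unrelated equivalences. Concretely: the face maps \(\iSet_1 \to \iSet_0\) are \(\dom\) and \(\cod\) (the two projections), the degeneracy \(\iSet_0 \to \iSet_1\) is the identity-morphism section, and the inner face map \(\iSet_2 \to \iSet_1\) is composition; on the classifying-diagram side these are precisely domain, codomain, identities, and composition of morphisms in \(\sets{\XX\kslice X}\). So one checks that the interpretation of each structural HoTT operation (taking the domain of a morphism, the identity morphism, the composite of a commuting triangle) goes over to the corresponding categorical operation under \cref{lem:classify}; this is the same kind of bookkeeping as in the proof of \cref{lem:precategory-segal}, just now tracked through the representing natural transformation. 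I expect this naturality/simpliciality bookkeeping to be the main obstacle — not because any individual check is hard, but because one must be careful that the equivalences at the three levels are induced by a single coherent construction (e.g.\ by exhibiting them all as instances of \cref{lem:classify} applied to the evident truncated type family and then invoking \cref{dfn:represents} directly) rather than chosen ad hoc. Once simpliciality is in hand, the Rezk condition is automatic: \(\iSet_\bullet\) is a Rezk \((1,1)\)-object by \cref{lem:precategory-segal} (since the category of sets in HoTT is univalent), and \(\cd(\sets{\XX\kslice X})\) is a complete Segal space, so the level-wise equivalence is an equivalence of Rezk \((1,1)\)-objects, completing the proof.
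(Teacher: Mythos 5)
Your proposal is correct and takes essentially the same route as the paper's proof: level zero via \cref{lem:universe-set-represents}, level one via \cref{lem:classify} together with the fact that function types interpret to the internal hom, level two induced by the Segal condition, and then the only nontrivial simplicial compatibility being the composition face map \(\delta^2_1\), which again reduces to the internal-hom interpretation. The paper performs the reduction to \(X=1\) inside \cref{lem:universe-set-represents} rather than in the proposition itself, but this is an organizational difference, not a mathematical one.
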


\begin{proof}
  Let \( X \in \XX \).
  We need to produce a natural equivalence \( \eta : \XX(X,\iSet_\bullet) \simeq i^*_2 (\sets{\XX \kslice X}) \) of \(2\)-restricted simplicial spaces.
  By \cref{lem:universe-set-represents}, we get a natural equivalence of the zeroth levels.
  Since the global points of the internal hom give the external hom, \cref{lem:classify} tells us that \( \XX(X,\iSet_1) \) is naturally equivalent to the groupoid of arrows in \( \sets{\XX \kslice X} \).
  These two equivalences clearly assemble to an equivalence of \(1\)-restricted simplicial objects, whereby we get an induced equivalence of the second simplicial levels via the Segal condition.
  The latter equivalence automatically respects the face maps \( \delta^2_0 \) and \( \delta^2_2 \) as well as the degeneracies.
  We need to check that it respects the composition map \( \delta^2_1 \).
  But this follows from the fact that function types interpret to the internal hom in \( \XX \).
\end{proof}

\subsection{The universe of \texorpdfstring{$R$}{R}-modules}
Let \( R \) be a ring object in \( \sets{\XX} \).
We show that the Rezk \((1,1)\)-object \( \iMod{R}_\bullet \) of \(R\)-modules in \( \XX \) represents the presheaf sending an object \( X \in \XX \) to the ordinary category of modules over the ring \( X \times R \in \sets{\XX \kslice X} \) (\cref{thm:R-mod-cat-represents}).

The key ingredient we used to prove that \( \iSet \) classifies \(0\)-truncated objects was that \( \ishset(A) \) has a global point if and only if \( A \) is a \(0\)-truncated object.
Similarly, to say what \( \iMod{R} \) classifies we need to understand the global points of \( \Rmodstr(A) \).

\begin{lem} \label{lem:internal-external-R-module-structures}
  Let \(R\) be a ring object in \( \sets{\XX} \).
  For all \( A \in \sets{\XX} \), global points of the object \(
  \tt{\(R\)-mod-str}(A) \)
  biject with \(R\)-module structures on the object \( A \) in \( \XX \).
\end{lem}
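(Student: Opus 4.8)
The plan is to unwind what $\Rmodstr$ is by interpretation and then compare the two sides syntactically. An $R$-module structure on a type $A$ (in HoTT) is the data of an abelian group structure on $A$ together with a ring action $R \to (A \to A)$ satisfying the module axioms; since $R$ and $A$ are sets, this entire structure is $0$-truncated, and indeed the type $\Rmodstr(A) \defeq \Sigma_{\mu : \dots} (\t{axioms})$ is a proposition-valued refinement only in the sense that equality of structures is governed by the underlying maps. Interpreting this type former into $\XX$, the object $\Rmodstr(A)$ is built out of internal homs $A \to A$, the ring object $R$, finite products, and (propositional) equality types, all of which are computed in $\XX$ by the standard interpretation. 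So the first step is to record the explicit description of $\Rmodstr(A)$ as an object of $\XX$ assembled from these pieces.

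Next I would observe that the global points functor $\Gamma = \XX(1,-) : \XX \to \spaces$ is left exact, hence preserves finite products, pullbacks, and (being a right adjoint in a geometric morphism) the internal hom into a $0$-truncated object in the sense that $\XX(1, B^A) \simeq \XX(A,B)$; it also preserves equality types of $0$-truncated objects, i.e. path objects, since these are built from pullbacks. Applying $\Gamma$ to the description of $\Rmodstr(A)$ from the first step therefore turns each internal ingredient into its external counterpart: the internal $A \to A$ becomes the external endomorphism set $\XX(A,A)$, the internal finite products become external products, the internal equality-type conditions (associativity, unitality, distributivity, compatibility with the group structure) become the corresponding external equations. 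The upshot is a natural bijection between $\XX(1, \Rmodstr(A))$ and the set of tuples consisting of an abelian group structure on the object $A \in \XX$ and a ring homomorphism $R \to \XX(A,A)$ (internalised appropriately) satisfying the module axioms — which is exactly the definition of an $R$-module structure on $A$ in $\XX$.

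The main obstacle will be bookkeeping rather than anything conceptual: one must be careful that the abelian group structure is itself an $R$-independent piece of $0$-truncated data whose global points are already known (this is essentially the content of the analogous statement for $\iAb$, or can be proved by the same argument applied to the ring $\Zb$), and that the ring action is encoded via the internal hom $R \to (A \to A)$ so that $\Gamma$ converts it into a genuine external action of $R$ on $A$. Concretely I would factor the argument as: (i) $\XX(1, \underline{\AbX\text{-}\tt{str}}(A))$ biject with abelian group structures on $A$, by left-exactness of $\Gamma$; (ii) given such a structure, $\XX(1, R \to \underline{\on{End}}(A))$ biject with ring actions, again by left-exactness; (iii) the axiom-encoding equality types have a global point iff the external equations hold, since $\Gamma$ preserves the relevant pullbacks and $\XX$ is a $1$-topos on $0$-truncated objects so equality of maps between sets is detected on global points after base change — here one uses that $A, R$ are $\kappa$-compact and $0$-truncated. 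Assembling (i)--(iii) and noting the bijections are evidently compatible gives the claimed bijection between global points of $\Rmodstr(A)$ and $R$-module structures on $A$.
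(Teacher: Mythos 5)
Your proposal is correct and follows essentially the same route as the paper: the paper's proof likewise observes that global points of $A$, $A^A$, $A^{A\times A}$, and $A^{R\times A}$ give the external points, endomorphisms, binary operations, and actions, and then concludes by noting that the limit-preserving functor $\Gamma$ carries the limit diagram carving out $\Rmodstr(A)$ to the one carving out external $R$-module structures. Your three-step factorisation (i)--(iii) is just a more explicit bookkeeping of that single limit-preservation argument.
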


\begin{proof}
It is well-known that the global points of \( A \), \( A^A \), \(A^{A \times A} \), and \( A^{R \times A} \) biject respectively with the set of points of $A$, the set of endomorphisms of $A$, the set of binary operations on \( A \), and set of maps \( R \times A \to A \) in \( \Ec \).
 One can check the global points functor \( \Gamma \) sends the limit diagram carving out the subobject \( \tt{\(R\)-mod-str}(A) \) of internal \(R\)-module structures on \( A \)
to the limit diagram carving out the (external) set of $R$-module structures on \( A \) from inside the set
\[ \Gamma A \times \Gamma(A^A) \times \Gamma(A^{A \times A}) \times \Gamma(A^{R \times A}) \]

 Since \( \Gamma \) preserves limits, we are done.
\end{proof}

For a ring \( R \in \sets{\XXk} \) and an object \( X \in \XX \), recall that \( X {\times} R \in \sets{\XX \kslice X} \) is a ring over \( X \).
We now show that \( \iMod{R} \) classifies \(R\)-modules in \( \XXk \).

\begin{pro} \label{pro:R-mod-universe-represents}
  Let \( R \) be a ring in \( \sets{\XXk} \).
  The object \( \iMod{R} \) represents the space-valued presheaf
  \[ X \longmapsto \coree{\Mod{(X{\times}R)}} : \XX^\op \longrightarrow \spaces \]
\end{pro}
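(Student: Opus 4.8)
The plan is to follow the template of the proof of \cref{lem:universe-set-represents}, applying \cref{lem:classify} to the type family \( \Rmodstr : \iSet \to \Type \), whose total space is by definition \( \iMod{R} = \Sigma_{A : \iSet} \Rmodstr(A) \). Both sides of the asserted equivalence are groupoids: \( \iMod{R} \) is \(1\)-truncated in \( \XX \) (a module structure on a set forms a set, and \( \iSet \) is \(1\)-truncated), while \( \coree{\Mod{(X{\times}R)}} \) is the core of a \(1\)-category, so along the way it suffices to track \(1\)-truncated data.

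First I would settle the case \( X = 1 \). By \cref{lem:classify} with \( Z = \iSet \) and \( P = \Rmodstr \), the space \( \XX(1,\iMod{R}) \) is the pullback of \( \spaces_* \to \spaces \) along the composite \( \XX(1,\iSet) \to \XX(1,\Type) \simeq \coree{\XXk} \xra{\coree{\Gamma}} \spaces \) induced by \( \Rmodstr \); concretely, this composite sends a \(0\)-truncated object \( A \) of \( \XXk \) to the space of global points of the object \( \Rmodstr(A) \). By \cref{lem:universe-set-represents}, \( \XX(1,\iSet) \) is the groupoid of \(0\)-truncated objects of \( \XXk \), and by \cref{lem:internal-external-R-module-structures} the fibre of \( \spaces_* \to \spaces \) over such an \( A \), namely the global points of \( \Rmodstr(A) \), is naturally identified with the set of \(R\)-module structures on \( A \). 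Unwinding the pullback, \( \XX(1,\iMod{R}) \) is therefore the groupoid whose objects are \(0\)-truncated objects of \( \XXk \) equipped with an \(R\)-module structure, and whose morphisms are the underlying isomorphisms respecting the module structure, i.e.\ module isomorphisms. Since \( 1 \times R \cong R \), this gives \( \XX(1,\iMod{R}) \simeq \coree{\Mod{(1{\times}R)}} \).

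For general \( X \) I would argue as in \cref{lem:universe-set-represents}: families \( X \to \iMod{R} \) correspond to families \( X \to X \times \iMod{R} \) over \( X \), and by pullback-stability of the universe together with stability of the interpretation under the base-change geometric morphism along \( X \to 1 \), the object \( X \times \iMod{R} \) is the universe \( \iMod{(X{\times}R)} \) of \( (X{\times}R) \)-modules in the \( \infty \)-topos \( \XX \kslice X \), where \( X \times R \in \sets{\XX\kslice X} \) is the ring obtained by pulling back \( R \). Applying the case just treated inside \( \XX \kslice X \), with terminal object \( \id_X \) in place of \( 1 \), yields \( \XX(X,\iMod{R}) \simeq (\XX\kslice X)(\id_X, \iMod{(X{\times}R)}) \simeq \coree{\Mod{(X{\times}R)}} \), and this is natural in \( X \) because every equivalence in sight is built from the universal properties of the classifying objects and so commutes with base change. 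I expect the main obstacle to be the bookkeeping around base change rather than any computation: one must verify that \( \XX \kslice X \) again supports the interpretation and that interpreting \( \Rmodstr \) there over the ring \( X \times R \) recovers the pullback of \( \iMod{R} \), and one must keep track that ``global points'' in \cref{lem:classify}, read in the slice, means sections over \( X \), so that \cref{lem:internal-external-R-module-structures} is being invoked internally to \( \XX\kslice X \).
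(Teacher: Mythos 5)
Your proposal is correct and follows essentially the same route as the paper's proof: reduce to \( X = 1 \) via pullback-stability of the universe, then combine \cref{lem:classify} applied to \( \Rmodstr \) with \cref{lem:internal-external-R-module-structures} to identify \( \XX(1,\iMod{R}) \) with the groupoid of \( R \)-modules. The extra unwinding you do of the pullback and of the base-change bookkeeping is detail the paper leaves implicit, not a different argument.
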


\begin{proof}
  First of all, by pullback-stability of the universe, we have that \( \iMod{(X{\times}R)} \simeq X \times \iMod{R} \) over \( X \).
  Since families \( X \to \iMod{R} \) correspond to families \( X \to X \times \iMod{R} \) over \( X \), we can assume \( X = 1 \) by pulling back over \( X \).
  
  As defined, \( \iMod{R} \) is the total space of \( \tt{\(R\)-mod-str} \).
  Combining Lemmas \ref{lem:internal-external-R-module-structures} and \ref{lem:classify}, we see that \( \XX(1,\iMod{R}) \) is naturally equivalent to the groupoid of \(R\)-modules in \( \XXk \), as desired.
\end{proof}

We recall how internal objects of homomorphisms in \( \XX \) are constructed.

\begin{dfn} \label{dfn:internal-hom}
  \begin{enumerate}
  \item Let \( A \) and \( B \) be abelian group objects in \( \sets{\XX} \).
    The {\bf object of group homomorphisms} \( \iiAb(A,B) \) is the following equaliser in \( \XX \):
    \[ \begin{tikzcd}[column sep=large]
        \iiAb (A,B) \rar[dashed]
        & B^A \ar[rr, shift left, "f \longmapsto f +_B f"] \ar[rr, shift right, "f \longmapsto f \circ  (+_A)" swap]
        && B^{A \times A}
      \end{tikzcd} \]
  \item Let $R$ be a ring in \( \sets{\XX} \), and let \( A \) and \( B \) be two $R$-modules.
    Write \( \alpha_X : R \times X \to X \) for the $R$-action on an $R$-module $X$.
    The {\bf object of $R$-module morphisms \( \iiMod{R}(A,B) \)} is the following equaliser in \( \XX \):
    \[ \begin{tikzcd}[column sep=large]
        \iiMod{R}(A,B) \rar[dashed]
        & \iiAb(A,B) \ar[rr, shift left, "f \longmapsto f \circ \alpha_A"] \ar[rr, shift right, "f \longmapsto \alpha_B(\id_R \times f)" swap]
        && B^{R \times A}
      \end{tikzcd} \]
  \end{enumerate}
\end{dfn}

It is not hard to see, using an argument similar to the proof of \cref{lem:internal-external-R-module-structures}, that the global points of \( \iiMod{R}(A,B) \) are actual \(R\)-module homomorphisms from \( A \) to \( B \).
Additionally, the object \( \iMod{R}(A,B) \) coming from interpretation is equivalent to \( \iiMod{R}(A,B) \), since it interprets to the same equaliser.

\begin{thm} \label{thm:R-mod-cat-represents}
  Let $R$ be a ring in \( \sets{\XXk} \).
  The Rezk \((1,1)\)-object \( \iMod{R}_\bullet \) represents the presheaf
  \[ X \longmapsto \Mod{(X{\times}R)} : \XX^\op \longrightarrow \Cat \]
  in the sense of~\cref{dfn:represents}.
\end{thm}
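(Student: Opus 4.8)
The plan is to imitate the proof of \cref{pro:cat-set-represents} almost verbatim, promoting the object-level identification of \cref{pro:R-mod-universe-represents} to all three simplicial levels. Recall first that $\iMod{R}_\bullet$ is a Rezk $(1,1)$-object by \cref{lem:precategory-segal}, since $\Mod{R}$ is a (univalent) category. Fix $X \in \XX$; we must produce an equivalence $\eta : \XX(X, \iMod{R}_\bullet) \simeq i^*_2(\Mod{(X{\times}R)})$ of $2$-restricted simplicial spaces, natural in $X$, where the right-hand side is the $2$-restriction of the classifying diagram of $\Mod{(X{\times}R)}$.

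For the zeroth level, \cref{pro:R-mod-universe-represents} already supplies a natural equivalence $\XX(X, \iMod{R}) \simeq \coree{\Mod{(X{\times}R)}}$, which is precisely the zeroth level $\Mod{(X{\times}R)}^\simeq$ of the classifying diagram. For the first level, $\iMod{R}_1$ is (via \cref{lem:precategory-segal}) the total space of the internal hom $\iMod{R}(-,-) : \iMod{R} \times \iMod{R} \to \iSet$ with its projection to $\iMod{R} \times \iMod{R}$; recall moreover that $\iMod{R}(A,B)$ agrees with the equaliser $\iiMod{R}(A,B)$ of \cref{dfn:internal-hom}, whose global points (over any base) are genuine $R$-module homomorphisms. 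Applying \cref{lem:classify} to the type family $(A, B) \mapsto \iiMod{R}(A, B)$ then identifies $\XX(X, \iMod{R}_1)$ naturally with the total space, over $\coree{\Mod{(X{\times}R)}} \times \coree{\Mod{(X{\times}R)}}$, of the hom-sets of $\Mod{(X{\times}R)}$ --- i.e.\ with the groupoid of arrows $\core{\Mod{(X{\times}R)}^{[1]}}$ --- compatibly with the source/target projections and the identity-assigning degeneracy.

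These two equivalences assemble into an equivalence of $1$-restricted simplicial spaces. Both $\iMod{R}_\bullet$ (being Segal) and the $2$-restricted classifying diagram (a $2$-restricted complete Segal space) satisfy the Segal condition, so this equivalence extends uniquely to the second level via $\iMod{R}_2 \simeq \iMod{R}_1 \times_{\iMod{R}_0} \iMod{R}_1 \simeq \core{\Mod{(X{\times}R)}^{[2]}}$, and the extension automatically respects $\delta^2_0$, $\delta^2_2$ and both degeneracies.

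The only delicate point --- and the step I expect to be the main obstacle --- is verifying that $\eta$ respects the composition face map $\delta^2_1$, i.e.\ that interpretation models composition of $R$-module homomorphisms by honest composition in $\Mod{(X{\times}R)}$. As in \cref{pro:cat-set-represents}, this follows because function types are interpreted by the internal hom in $\XX$ and because $\iiMod{R}(A,B)$ is carved out of $B^A$ as a subobject closed under this internal composition, so the composite computed inside $\iMod{R}$ is literally the composite of the underlying maps. Naturality of $\eta$ in $X$ is inherited at each level from the naturality built into \cref{pro:R-mod-universe-represents}, \cref{lem:classify}, and the (unique) Segal extension; together with the composition compatibility this completes the argument, so the real work is bookkeeping rather than any new idea.
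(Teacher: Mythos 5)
Your proposal is correct and follows essentially the same route as the paper's proof: it builds $\eta_0$ from \cref{pro:R-mod-universe-represents}, gets $\eta_1$ from \cref{lem:classify} applied to the internal hom of modules (identified with the equaliser $\iiMod{R}(A,B)$), extends to the second level via the Segal condition, and reduces the compatibility of composition and identities to the corresponding facts for the underlying sets. No substantive difference from the paper's argument.
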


\begin{proof}
  Let \( X \in \XX \).
  By our definition of representability, we need to produce a natural equivalence \( \eta : \XX(X,\iMod{R}_\bullet) \simeq i^*_2 \Mod{(X{\times}R)} \) of \(2\)-restricted simplicial spaces, and \cref{pro:R-mod-universe-represents} gets us \( \eta_0 \).

  For the first level, recall that \( \iMod{R}_1 \) is the total space of the family \( \iMod{R}(-,-) : \iMod{R}^2 \to \Ab \).
  By our discussion just above, applying \( \XX(X,-) \) to this family recovers the internal hom of \((X{\times}R)\)-modules restricted to the groupoid core.
  Since the global points of the internal hom of modules recovers the external hom of modules, we conclude by \cref{lem:classify} that there is a natural equivalence \( \eta_1 :  \XX(X,\iMod{R}_1) \simeq \core{\Mod{(X{\times}R)}^{[1]}} \).
  By construction, this equivalence respects the two projection maps sending a homomorphism to its domain and codomain.
  We also need to check that it respects the degeneracy map \( \id : \XX(X,\iMod{R}) \to \XX(X,\iMod{R}_1) \) which picks out the identity.
  This follows from the corresponding fact for sets, since \( \id \) here is induced by the degeneracy \( \XX(X,\iSet) \to \XX(X, \iSet_1) \) and equality of \( R \)-module homomorphisms can be checked on the underlying maps.
  We conclude that \( \eta_0 \) and \( \eta_1 \) assemble to a map of \( 1 \)-restricted simplicial spaces.

  For the second level, we have a candidate for the equivalence
  \( \eta_2 : \XX(X,\iMod{R}_2) \to \core{\Mod{R}^{[2]}} \)
  given by \( \eta_1 \times_{\eta_0} \eta_1 \) and using the Segal condition and that \( \XX(X,-) \) preserves limits.
  By construction \( \eta_2 \) respects the two face maps \( \delta^2_0 \) and \( \delta^2_2 \), since these are just pullback projections.
  In addition, \( \eta_2 \) respects the two degeneracy maps since these are induced by \( \id \) above, and \( \eta_1 \) respects \( \id \).
Finally, we need to check that \( \eta_2 \) respects composition.
But composition of \( R\)-module homomorphisms is defined by composing the underlying maps, and since we can check equality of \( R \)-module homomorphisms on the underlying maps, this follows from the corresponding statement for sets. 

We conclude that \( \eta \) defines a natural equivalence of \(2\)-restricted simplicial objects, as desired.
\end{proof}

Finally, we explain the semantics of \cref{thm:coproduct-lex} and \cref{cor:colim-X-preserves-finite-products} for module categories.
To any object \( X \) in \( \XX \) (more generally, any morphism) we have the usual sequence of adjoints
\( \Sigma_X \dashv X \times (-) \dashv \Pi_X \).
The right adjoints automatically lift to categories of modules, being left-exact.
By the internal cocompleteness of categories of modules, we have a corresponding leftmost adjoint \( \colim{X} \).
By \cref{cor:colim-X-preserves-finite-products} \( \colim{X} \) preserves internal products, and \cref{thm:coproduct-lex} implies that it is internally left-exact whenever \( X \) is \(0\)-truncated.
On global points, we deduce the following:

\begin{thm} \label{thm:semantic-coprod-lex}
  Let $R$ be a ring object in \( \sets{\XXk} \), and let \( X \in \XX \).
  We have an adjunction:
  \[ \begin{tikzcd}
    \Mod{(X{\times}R)} \ar[rr, shift right=.5em, "\colim{X}" swap, "\top"] && \ar[ll, shift right=.5em, "{X\times(-)}" swap] \Mod{R}
  \end{tikzcd} \]
where \( \colim{X} \) preserves products.
If \( X \) is \(0\)-truncated, then \( \bigoplus_X \jeq \colim{X} \) is left-exact.
\end{thm}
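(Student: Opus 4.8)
The plan is to interpret the type-theoretic results of \cref{ssec:colimits-R-modules,ssec:coproducts-a-la-harting} into \( \XX \), producing statements about internal Rezk \((1,1)\)-objects, and then to pass to global points. Two facts set up the dictionary. First, \cref{thm:R-mod-cat-represents} says \( \iMod{R}_\bullet \) represents \( Y \mapsto \Mod{(Y{\times}R)} \); in particular \( \XX(1, \iMod{R}_\bullet) \simeq \Mod{R} \) and \( \XX(X, \iMod{R}_\bullet) \simeq \Mod{(X{\times}R)} \). Second, because a functor out of a groupoid is just a map (see \cref{lem:groupoid-indexed-functor} and the discussion around it), the interpretation of the functor precategory \( \Mod{R}^X \) is the internal category of \(X\)-indexed families of \(R\)-modules, whose category of global points is \( \XX(X, \iMod{R}_\bullet) \), i.e.\ \( \Mod{(X{\times}R)} \) by \cref{thm:R-mod-cat-represents}. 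As usual we may assume \( X \) is a \(\kappa\)-compact \(1\)-type: replacing \( X \) by \( \tr{X}_1 \) changes neither side (categories of modules are \(1\)-truncated objects of \( \XX \)), and one may enlarge \( \kappa \) if necessary.

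First I would produce the adjunction and the product statement. Interpreting the adjunction \( \colim{X} \dashv \const{X} \) of \cref{pro:colimit-ab} — valid for \(R\)-modules via the forgetful functor, as in \cref{cor:modR-bicomplete} — gives an internal adjunction between Rezk \((1,1)\)-objects in \( \XX \), in which the interpretation of \( \const{X} \) is base change \( X \times (-) \) along the projection \( X \to 1 \), and the interpretation of \( \colim{X} \) is its internal left adjoint. Applying \( \XX(1,-) \) and using the identifications above turns this into the asserted adjunction \( \colim{X} : \Mod{(X{\times}R)} \adj \Mod{R} : X \times (-) \). That \( \colim{X} \) preserves products then follows by interpreting \cref{cor:colim-X-preserves-finite-products}, since products of categories correspond to levelwise products of Rezk \((1,1)\)-objects and \( \XX(1,-) \) preserves products.

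Now suppose \( X \) is \(0\)-truncated. Interpreting \cref{thm:coproduct-lex} shows that the internal functor \( \bigoplus_X : \iMod{(X{\times}R)}_\bullet \to \iMod{R}_\bullet \) preserves internal finite limits. To upgrade this to external left-exactness I would invoke the internalisation procedure of \cref{dfn:internalisation,pro:internalisation}. Let \( A : D \to \Mod{(X{\times}R)} \) be a diagram over a finite category \( D \), so that \( \ell_*(D) \) is just \( D \) regarded in \( \XX \); since \( \iMod{(X{\times}R)}_\bullet \) is internally complete (again interpreting \cref{cor:modR-bicomplete}), the internal limit \( \ilim{\ell_*(D)} \int{A} \) exists and \cref{pro:internalisation} identifies it with \( \lim{D} A \). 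The external \( \bigoplus_X \) is the global-points incarnation of the internal one, so it commutes with passage to global points; combined with internal left-exactness this yields \( \bigoplus_X(\lim{D} A) \simeq \ilim{\ell_*(D)} \bigl( \bigoplus_X \circ \int{A} \bigr) \). Because internalisation is compatible with post-composition by an internal functor — a formal consequence of naturality of the adjunction \( \ell_* \dashv \XX(1,-) \) — the right-hand side equals \( \ilim{\ell_*(D)} \int{ ( \bigoplus_X \circ A ) } \), which \cref{pro:internalisation}, applied this time in \( \iMod{R}_\bullet \), rewrites as \( \lim{D} ( \bigoplus_X \circ A ) \). Hence \( \bigoplus_X \) preserves finite limits, i.e.\ it is left-exact (the zero object and finite products are in any case also covered by the previous paragraph).

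The two ``interpret, then take global points'' steps are routine once the dictionary above is in place. The step I expect to require the most care is the conversion of \emph{internal} left-exactness into \emph{external} left-exactness — this is precisely what the internalisation machinery of \cref{pro:internalisation} exists for, and the work lies in checking that internalisation commutes with post-composition by internal functors and that the relevant finite indexing categories are fixed by \( \ell_* \). A secondary point of care is verifying that the interpretation of \( \Mod{R}^X \) really has \( \Mod{(X{\times}R)} \) as its category of global points, compatibly with the adjunction and with products.
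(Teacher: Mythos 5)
Your proposal is correct and follows essentially the same route as the paper: interpret the internal adjunction and the internal left-exactness results (\cref{pro:colimit-ab}, \cref{cor:colim-X-preserves-finite-products}, \cref{thm:coproduct-lex}) into \( \XX \), identify the global points via \cref{thm:R-mod-cat-represents}, and use \cref{pro:internalisation} to transfer external finite limits to internal ones and back. The only cosmetic differences are that the paper treats finite products and equalisers separately rather than a general finite indexing category \( D \), and that you make explicit the (implicitly used) compatibility of internalisation with post-composition by an internal functor.
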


We emphasise that this is an {\em external} statement about ordinary categories, and left-exactness refers to preservation of finite limits in the usual (external) sense.

\begin{proof}
  From Theorem~\ref{thm:coproduct-lex} we get an adjunction between Rezk \((1,1)\)-objects:
  \[ \textstyle \icolim{X} : \iMod{R}^X \adj \iMod{R} : X {\times} (-) \]
  which by the previous theorem yields the adjunction of our statement on global points.
  Explicitly, it is clear that the right adjoint corresponds to base change, so we conclude that the left adjoints must agree.

  By \cref{pro:internalisation}, the limit of a finite family \( A : n \to \Mod{(X{\times}R)} \) can be computed as the limit of the internalisation \( \int{A} : \ell_*(n) \to \iMod{R}^X \).
  The category \( \ell_*(n) \) is simply the interpretation of \( \fin{n} \) and is therefore internally finite.
  Hence \( \icolim{X} \) preserves the limit of \( \int{A} \) by \cref{cor:colim-X-preserves-finite-products}.
  We have:
  \[ \lim{i : n} \colim{X} A(i) \simeq \ilim{i : \ell_*(n)} \icolim{X} \int{A}(i) \simeq \icolim{X} \ilim{\ell_*(n)} \int{A} \simeq \colim{X} \lim{n} A \]
  where the first and third equivalences use \cref{pro:internalisation} for limits.
  
  If \( X \) is a set, to see that \( \bigoplus_X \) is left-exact it suffices to show that it preserves products and equalisers.
  We already know it preserves products.
  Applying \( \ell_* \) to an external equaliser diagram produces the internal one in \( \XX \) obtained by interpretation.
 The claim then follows by the same argument as above.
\end{proof}

We end by discussing the relation of this theorem to \cite[Theorem~2.7]{Har82}.

\begin{rmk}
Harting's construction of the left-exact coproduct applies in any elementary \(1\)-topos (with \( \Nb \)).
The \(1\)-topos \( \sets{\XXk} \) is---in particular---an elementary topos, hence Theorem~2.7 of loc.\ cit.\ implies the \(0\)-truncated and \( R \jeq \Zb \) case of our theorem above.
Conversely, a Grothendieck \(1\)-topos \( \Sh_0(\Cc) \) is equivalent to the \(0\)-truncated fragment of the \(\infty\)-topos \( \Sh_\infty (\Cc) \) of \(\infty\)-sheaves on the same site.
Consequently, we recover Harting's theorem for \( \Sh_0(\Cc) \) by applying our theorem to \( \Sh_\infty (\Cc) \).

It is not yet known whether any elementary \(1\)-topos can be realised as the \(0\)-truncated fragment of some {\em elementary} \(\infty\)-topos~\cite{Ras22}.
Nor is it known whether Homotopy Type Theory has semantics in the latter.
If these both hold, then our theorem would in turn imply (indeed, generalise) Harting's theorem in the elementary setting as well.

In \cite{Har82}, the construction of the internal coproduct of abelian groups occupies almost 60 pages, partly because the internal language of an elementary \(1\)-topos was not well-developed at the time.
However, once the construction was complete, left-exactness followed by general results of~\cite{Joh77}.
In contrast, our generalised construction is essentially contained in  \cref{ssec:coproducts-a-la-harting}, and weighs in at just over 2 pages.
The analogue of the general results of \cite{Joh77} in our setting---or at least the parts we needed---are embodied by \cref{pro:internalisation}, and various of our results in HoTT. 
\end{rmk}

\printbibliography

\end{document}